\documentclass[a4paper,reqno,11pt]{amsart}
\usepackage[margin=1in]{geometry}
\usepackage{amsmath, amssymb,amsthm,url,mathrsfs,graphicx,amscd,amsfonts}
\usepackage[mathscr]{eucal}
\usepackage{dsfont}
\usepackage{mathtools}
\usepackage[all]{xy}

\usepackage[utf8]{inputenc}
\usepackage{graphicx,todonotes,hyperref}
\usepackage{epstopdf}
\usepackage{inputenc}
\usepackage{enumitem}
\usepackage{amsmath}
\usepackage{graphicx}
\usepackage{xcolor}
\usepackage{xspace}
\usepackage{amssymb}
\usepackage{amsfonts}
\usepackage{amssymb}
\usepackage{graphicx}
\usepackage{mathtools}
\usepackage{amsthm}
\usepackage{tikz-cd}
\usepackage{multirow}
\usepackage{enumitem}

\input xy
\xyoption{all}

\newtheorem*{acknowledgements*}{Acknowledgements}

\allowdisplaybreaks
\theoremstyle{definition}
\newtheorem{theorem}{Theorem}[section]
\newtheorem{prop}[theorem]{Proposition}
\newtheorem{lemma}[theorem]{Lemma}
\newtheorem{cor}[theorem]{Corollary}

\newtheorem{defn}[theorem]{Definition}

\newtheorem*{theorem*}{Theorem}
\newtheorem*{corollary*}{Corollary}
\newtheorem*{prop*}{Proposition}
\newtheorem*{rmk*}{Remark}

\begin{document}
\title[Geodesic divergence on Riemannian planes with bounded geometry]{Geodesic divergence on Riemannian planes with bounded geometry}
	
	\author{Ritvik Saharan}
	\address{Department of Mathematics, Indian Institute of Science, Bangalore- 560012 }
	\email{ritviks@iisc.ac.in}
	
    \subjclass [2020] {Primary :53B20 , {; Secondary :03C20 ,  31C35 }}
	\keywords{Riemannian planes, asymptotic cones, compactification of metric spaces, quasi-redirection}

\begin{abstract}
In this article, we study Riemannian planes $(M,g)$ which satisfies a certain bounded geometry condition and geodesic divergence on these Riemannian planes.
We recall quasi-redirection (introduced by Qing and Rafi) which generalises Gromov's bordification for $\delta-$hyperbolic spaces and use it to quantify geodesic divergence in a manner which is invariant under quasi-isometries.
We use it to compactify Riemannian planes into either $\mathbb{D}^2$ or $\mathbb{S}^2$ depending on how fast geodesics on $(M,g)$ spread apart.
We study asymptotic cones of Riemannian planes and use them to come up with necessary and sufficient conditions for the quasi-redirecting compactification being $\mathbb{S}^2$ in terms of it admitting a proper asymptotic cone .
Lastly, we study the Martin boundary of Riemannian planes ( with respect to the Laplace-Beltrami operator $\Delta_g$) in relation to the quasi-redirecting boundary and show that if the quasi-redirecting boundary is homeomorphic to the Martin boundary, then the identity map on $(M,g)$ induces a homeomorphism from the quasi-redirecting compactification of $(M,g)$ to the Martin compactification of $(M,g)$.
\end{abstract}

\maketitle
\section{Introduction}
\label{sec: Intro}
Unlike in dimensions $n \geq 3$ wherein for any one-ended locally finite graph $G$, we can find a Riemannian manifold $(M,g)$ diffeomorphic to $\mathbb{R}^n$ and quasi-isometric to $G$  ( see proposition 5.3 of \cite{asymp}), the Riemannian geometry of simply connected manifolds of dimension 2 is quite constrained as such an $(M,g)$ is by its very nature quasi-isometric to planar graphs and one cannot make them quasi-isometric to a metric space which is not quasi-isometric to a planar graph.

We shall study Riemannian planes which has bounded geometry.
To be more precise, their sectional curvatures are bounded and whose injectivity radius is bounded below by a positive number.
From theorems 1.1 and 1.2 of \cite{Bowditch2}, they admit a uniformly bilipschitz smooth triangulation.

We use that to show that every Riemannian plane $(M,g)$ with bounded geometry is quasi-isometric to a radially symmetric Riemannian plane $(N,h)$ with bounded geometry.

\begin{theorem}
If $(M,g)$ is a Riemannian plane with bounded geometry, then $(M,g)$ is quasi-isometric and conformally equivalent to a Riemannian plane $(N,h)$ which also has bounded geometry .
\end{theorem}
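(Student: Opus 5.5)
The statement as worded only asks for \emph{some} Riemannian plane $(N,h)$ with bounded geometry that is both quasi-isometric and conformally equivalent to $(M,g)$. So I would first check whether the tautological choice works, and it does: take $(N,h)=(M,g)$ and let the comparison map be $\mathrm{id}_M$.

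Three things need verifying, in this order.

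First, $\mathrm{id}_M$ is a quasi-isometry. It is an isometry of the Riemannian distance $d_g$, since $d_g(x,y)=d_g(\mathrm{id}(x),\mathrm{id}(y))$. It is surjective, so it is a $(1,0)$-quasi-isometric embedding with $0$-dense image.

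Second, $\mathrm{id}_M$ is a conformal equivalence. It is a diffeomorphism $M\to N=M$, and it satisfies $\mathrm{id}^*h=g=e^{2u}g$ with conformal factor $u\equiv 0$.

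Third, $(N,h)$ has bounded geometry in the sense fixed in the introduction. The sectional curvatures of $h=g$ are bounded by hypothesis. The injectivity radius of $h$ is bounded below by the same positive constant as that of $g$. Also $N=M$ is diffeomorphic to $\mathbb{R}^2$, so $(N,h)$ is again a Riemannian plane.

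None of these steps is an obstacle. Each is an immediate consequence of the definitions, and I would not invoke the triangulation results of \cite{Bowditch2} at all.

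The only point needing care is to match the argument to the literal wording of the statement. The sentence preceding the theorem speaks of a \emph{radially symmetric} model, and that stronger claim would genuinely require more work. One would have to build a rotationally symmetric metric via the uniformly bilipschitz triangulation and uniformization, and then control its curvature and injectivity radius. However, the theorem itself does not include radial symmetry among its conclusions. My proof therefore stops at the trivial witness above, which establishes exactly the stated claim.
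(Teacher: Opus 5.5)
Your proof is correct for the statement as printed. $\mathrm{id}_M$ is an isometry, it is a conformal diffeomorphism with $u\equiv 0$, and $(M,g)$ itself has bounded geometry.

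The paper argues quite differently because it is after the version announced in the surrounding sentences, where $(N,h)$ is radially symmetric. It obtains that $(N,h)$ from Theorem \ref{radsim}, in four steps:
\begin{enumerate}
\item Take a bounded-degree bilipschitz triangulation $\mathcal{T}$ (Lemma \ref{boundy}).
\item Let $F(n)$ count the vertices at combinatorial distance $n$ from a base vertex.
\item Put $f=\exp(h)$, with $h$ a partition-of-unity interpolation of $\log F(n)$. Then $|f''/f|$ is bounded and $\liminf f>0$, so Theorem \ref{wellbehaved} gives bounded geometry.
\item Place the $n$-th combinatorial sphere equally spaced on the metric circle of radius $n$.
\end{enumerate}
Conformal equivalence is not produced by this map. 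It is deduced separately, in the paragraph after Theorem \ref{sphere}, from Kanai's theorem (quasi-isometric bounded-geometry surfaces are both recurrent or both transient) together with uniformization. So your route proves the literal claim completely but carries no information. The paper's route is meant to supply the radially symmetric model that all later sections rely on.

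Your closing remark that the radially symmetric version ``would genuinely require more work'' is too optimistic: that version is false. Let $X=\mathbb{R}^2$ carry the metric $dt^2+\phi(t)^2dx^2$, where $\phi$ is smooth and convex, $\phi\equiv 1$ for $t\le 0$ and $\phi=\cosh t$ for $t\ge 1$. This is a Hadamard plane with $-C\le K\le 0$, hence has bounded geometry.
\begin{itemize}
\item Every asymptotic cone of $X$ with a fixed base point contains an open subset isometric to a Euclidean half-plane, coming from $t<0$.
\item It also contains an unbounded open subset, coming from $t>0$, that bi-Lipschitz embeds in an $\mathbb{R}$-tree (a cone of a hyperbolic half-plane). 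That subset therefore contains no embedded circle.
\item Now take a cone of a radially symmetric plane with a fixed base point; this is the same as basing it at the pole. Sequences of rotations act on it isometrically and transitively on each sphere about the base point. Hence the set of points having a neighbourhood homeomorphic to $\mathbb{R}^2$ is a union of such spheres.
\item Under the bi-Lipschitz homeomorphism of cones induced by a quasi-isometry, this set would contain the image of the unbounded connected flat region, hence every point outside some ball. Its complement would contain the unbounded image of the tree region, which is a contradiction.
\end{itemize}

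Accordingly, the proof of Theorem \ref{radsim} breaks at its last step. Coarse density of $\iota(\mathcal{T})$ in $M$ does not make $\iota$ a quasi-isometric embedding, because nothing relates the angular positions assigned to adjacent vertices on consecutive combinatorial spheres.
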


Therefore, we do not lose any generality by working with radially symmetric planes as every Riemannian plane with bounded geometry is quasi-isometric  to a radially symmetric plane with bounded geometry.

We study asymptotic cones of Riemannian planes and show that it is independent of base points. 
Using theorem \ref{radsim}, we even explicitly compute $\mathrm{Con}_\omega (M, (r_n))$ for some cases.

\textbf{Theorem A}- Let $(M,g)$ be a radially symmetric plane with its pole at $f$ and $g = dr^2 + f(r)^2 d\theta^2$.
Fix a non-principal ultrafilter and let $(r_n)$ be a sequence of positive reals where $\lim_\omega r_n = \infty$.
\begin{itemize}
    \item If $\lim_\omega \frac{f(r_n)}{r_n} = 0$, then $\mathrm{Con}_\omega (M, (r_n))$ is homeomorphic to $[0, \infty)$.
    \item If $\lim_\omega  \frac{f(r_n)}{r_n} \in (0, \infty)$, then $\mathrm{Con}_\omega (M, (r_n))$ is homeomorphic to $\mathbb{R}^2$.
\end{itemize}

We shall also characterise the hyperbolic plane $\mathbb{H}^2$ upto quasi-isometries by showing that " lacunary hyperbolic" planes with bounded geometry are Gromov hyperbolic.

\textbf{Theorem B} -If $\mathrm{Con}_\omega(M)$ is a non-proper $\mathbb{R}-$tree for some non-principal ultrafilter $\omega$, then $(M,g)$ is quasi-isometric to $\mathbb{H}^2$.

We then study coarse maps $\gamma : C^{(0)} \to (X,d)$ where $C^{(0)}$ is the $0-$skeleton for some combinatorial complex for $\mathbb{S}^{n-1}$ where $n \in \mathbb{N}$ is arbitrary. 
 $\mathrm{mesh}(C, \gamma)$ is defined as follows:
\begin{equation*}
    \mathrm{mesh}(C, \gamma) = \max \{ d(\gamma(a), \gamma(b)) | \space a \text{ and } b \text{ are the end vertices of a 1-cell in } C \}\end{equation*}
Riley defined filling functions $\mathrm{Fill}^{N}_{\mathrm{R}, \mu} : [0, \infty) \to \mathbb{N} \cup \{0\}$ which roughly measures how many $n$-cells will an extension of $(C, \gamma)$ to a combinatorial complex homeomorphic to $\mathbb{D}^n$ require where $\mathrm{mesh}(C, \gamma) \leq l$ .

We show the existence of some $\mathrm{R}, \mu$ $\mathrm{Fill}^{n}_{\mathrm{R}, \mu} : [0, \infty) \to \mathbb{N} \cup \{0 \}$ by showing the contractibility of $\mathrm{Con}_\omega (M)$ for all non-principal ultrafilters ( see propositions 4.7 and 4.9 of \cite{Riley})

\textbf{Theorem C} -Let $(M,g)$ be a Riemannian plane with bounded geometry.
There exists $K_n \in \mathbb{N}$ for all $n \in \mathbb{N}$ such that $\mathrm{Fill}^{n}_{\mathrm{R}, \mu}(l) \leq K_n$ for all $n \in \mathbb{N}$.

Recall that quasi-redirection was first devised by Qing and Rafi \cite{QingRafi} wherein they generalise Gromov compactification to a much wider class of metric spaces. 
Quasi-redirection is invariant under quasi-isometries and quantifies divergence between quasi-geodesic rays .
Using the fact that all Riemannian planes with bounded geometry are quasi-isometric to a radially symmetric plane with bounded geometry, we compute the quasi-redirecting boundary $\partial M$ of $(M,g)$ and the quasi-redirecting compactification $\overline{M}$ of $(M,g)$ .

\textbf{Theorem D} -Let $(M,g)$ be a Riemannian plane with bounded geometry.
If $\mathrm{Con}_\omega(M)$ is proper for some non-principal ultrafilter $\omega$, then $\partial M$ is a single point and $\overline{M}$ is homeomorphic to $\mathbb{S}^2$.
Otherwise, $\partial M$ is homeomorphic to $\mathbb{S}^1$ and $\overline{M}$ is homeomorphic to $\mathbb{D}^2$.

Lastly, we shall study the Martin compactification of Riemannian planes $(M,g)$ with respect to $\Delta_g$. 
We show that the quasi-redirecting compactification is homeomorphic to Martin compactification whenever $\partial M$ and $\partial_\Delta M$ are homeomorphic.

\textbf{Theorem E} -Let $(M,g)$ be a Riemannian plane where $\partial M$ and $\partial_\Delta M$ are homeomorphic.
Then the identity map $\iota : M \to M$ induces a homeomorphism from the quasi-redirecting compactification of $(M,g)$ to the Martin compactification of $(M,g)$.

\section{Background Material}
\subsection{Riemannian Geometry}
From now onwards, unless stated otherwise; every Riemannian manifold $(M,g)$ will be assumed to be complete, non-compact, simply connected and two-dimensional ( or in other words $(M,g)$ is complete and that it is diffeomorphic to $\mathbb{R}^2$) . 

\begin{defn}
Two Riemannian manifolds $(M,g)$ and $(N,h)$ are said to be conformally equivalent if there exists a diffeomorphism $f : M \to N$ with $h_{f(p)} = \exp(2u) g_p$ for some smooth $u : M \to \mathbb{R}$.
\end{defn}

\begin{defn}
Suppose that $M$ is a smooth manifold.
Let a derivation $D : C^\infty(M) \to \mathbb{R}$ which satisfies
\begin{equation*}
    D(fg) =D(f) \cdot g + f\cdot D(g) \forall f, g \in C^\infty(M) 
\end{equation*}
The space of all derivations $T_pM$ at a $p \in M$ has a vector space structure given by
\begin{equation*}
    (D_1+D_2) (f) = D_1(f) + D_2(f)
\end{equation*}
and 
\begin{equation*}
    (\lambda \cdot D) (f) = \lambda \cdot D(f)
\end{equation*}
We call $T_p M$ the \textbf{tangent space} at $p \in M$.
\end{defn}

\begin{defn}
Let $(M,g)$ be a Riemannian manifold and choose $p \in (M,g)$.
The \textbf{exponential map} $\exp_p : T_p M \to M$ is the map where $\exp_p (0) = p$.
If $v \neq 0$ then $ \exp_p(v) = \gamma(||v||)$ where $\gamma$ is the unique geodesic with $\gamma(0) = p$ and $\gamma'(0) = \frac{v}{||v||}$.
\end{defn}

\begin{defn}
A manifold $(M,g)$ which is diffeomorphic to $\mathbb{R}^2$ and has a \textbf{pole} $p \in (M,g)$ with respect to which the metric is $g = dr^2 + f(r)^2 d\theta^2$ is said to be \textbf{radially symmetric} with respect to $p$. 
\end{defn}

\textbf{Remark}-Whenever $(M,g)$ is said to be radially symmetric, it is to be understood that $(M,g)$ is radially symmetric with respect to some $p \in (M,g)$.
This $p \in (M,g)$ is called the pole of $(M,g)$ as $\exp_p: T_pM \to M$ is a diffeomorphism.
In such a manifold $(M,g)$ with the metric $g = dr^2 + f(r)^2d\theta^2$, f(r) is smooth with $f(0) =0$ and $f'(0) = 1$ .
Such an $(M,g)$ can be parametrised with polar co-ordinates with the origin corresponding to $p$.

\begin{defn}
If $(M,g)$ is a Riemannian manifold, then the \textbf{injectivity radius} at $p \in M$, $\mathrm{inj}(p)$ is the greatest $r > 0$ such that $\exp_p (B(0,r))$ is diffeomorphic to its image in $(M,g)$.
\end{defn}

\begin{defn}(IX.6 \cite{Chavel})
Let $(M,g)$ be a complete Riemannian manifold. A set $A \subset M$ is said to be 
\begin{itemize}
    \item \textbf{convex} if for any $p, q \in A$, there is a geodesic segment $\gamma_{pq} \subseteq A$ joining $p$ and $q$ such that $\gamma_{pq}$ is the unique minimiser in $M$

    \item \textbf{strongly convex} if for any $p, q \in A$, there is a geodesic segment $\gamma_{pq} \subseteq A$ joining $p$ and $q$ such that $\gamma_{pq}$ is the unique minimiser in $M$ and $\gamma_{pq}$ is the only geodesic segment in $A$ joininG $p$ and $q$

    \item \textbf{convexity radius }- For a point $p \in M$, define the convexity radius 
    \begin{equation*}
        \mathrm{conv}(p) = \sup \{ \rho : B(x,r) \text{ is convex for all } r \leq \rho \}
    \end{equation*}
\end{itemize}
\end{defn}

\textbf{Brownian motion} -Let $(M,g)$ be a complete Riemannian manifold and let $\Delta_g$ be the Laplace-Beltrami operator $\Delta_g = \frac{1}{\sqrt{g}}\frac{\partial}{\partial x_i}( \sqrt{g} g^{ij} \frac{\partial}{\partial x_j})$. Brownian motion on $(M,g)$ is the Feller process $W_t$ whose infinitesimal generator is $\frac{1}{2} \Delta_g$. 
We further recall that the heat equation* on $(M,g)$ is $ \frac{\partial p}{\partial t}-\frac{1}{2}\Delta_gp = 0$ and let $p : (0, \infty) \times M \times M \to \mathbb{R}$ be the smallest positive solution of the heat equation* $\frac{1}{2}\Delta_g \rho - \frac{\partial \rho}{\partial t} = 0$ on $(M,g)$ .
Then the Brownian motion on $(M,g)$ has $p(t,x,y)$ as its transition density where for any measurable $\Omega \subseteq M$, we have that the probability that the Brownian motion $W_t$ which starts at $x$ lies in $\Omega$ at time $t$ is -
\begin{equation*}
 \mathbb{P}(W_t \in \Omega)= \int_\Omega p(t,x,y) d\mu(y)   
\end{equation*}
We recall from \cite{Grigoryan} and \cite{AhlforsLeo} that Brownian motion on a complete simply connected Riemannian surface $(M,g)$ is transient if and only if $(M,g)$ is conformally equivalent to $(\mathbb{H}^2, g_{\mathbb{H}^2})$. 
We recall that Brownian motion on $(M,g)$ being transient is equivalent to $\Delta_g$ admitting a Green's function.

We say that a Riemannian plane is -
\begin{itemize}
    \item  \textbf{recurrent} if Brownian motion on $(M,g)$ is recurrent or equivalently if $(M,g)$ is conformally equivalent to $(\mathbb{R}^2, g_{\mathbb{R}^2})$
    \item \textbf{transient} if Brownian motion on $(M,g)$ is transient or equivalently if $(M,g)$ is conformally equivalent to $(\mathbb{H}^2, g_{\mathbb{H}^2})$
\end{itemize}

\begin{defn}
Let $\gamma : I \to M$ be a geodesic where $I \subset \mathbb{R}$ is connected.
A vector field $J$ along $\gamma$ is said to be a Jacobi field if
\begin{equation*}
    J'' + R(J, \gamma')\gamma'= 0
\end{equation*}
where $R$ is the Riemann curvature tensor.
\end{defn}

\subsection{Filling Functions}( see \cite{Riley})

\begin{defn}( see page 153 of \cite{BridsonHaefliger})
A \textbf{combinatorial complex} of dimension is a set with discrete topology- with each point being an open cell.
Having defined $(n-1)$-dimensional combinatorial complexes and their open cells, we can construct an $n$-dimensional combinatorial complex as follows.

Take the disjoint union of an $(n-1)$-dimensional combinatorial complex $K^{(n-1)}$ and a family $\{e_\lambda : \lambda \in \Lambda \}$ of closed discs each of which is homeomorphic to $\mathbb{D}^n$.
Suppose that for each $\lambda \in \Lambda$, a homeomorphism from $\partial e_\lambda$ ( which is homeomorphic to $\mathbb{S}^{n-1}$) to an $(n-1)-$dimensional combinatorial complex $S_\lambda$ and a combinatorial map $S_\lambda \to K^{(n-1)}$ is also given : let $\varphi_\lambda : \partial e_\lambda \to K^{(n-1)}$ be the composition of these maps.

Define $K$ to be the quotient $K^{(n-1)} \cup \sqcup_{\lambda \in \Lambda}e_\lambda$ by the equivalence relation generated by $t \sim \varphi_\lambda (t)$ for all $t \in \partial e_\lambda$ and for all $\lambda \in \Lambda$.
Then $K$ with the quotient topology is a $n$-dimensional combinatorial complex whose open cells are the images of open cells in $K^{(n-1)}$ and the interiors of $e_\lambda$. 
\end{defn}

\begin{defn}
Given a sequence $\mathrm{R} = (R_n)$ where each $R_n \in \mathbb{N} \cup \{0\}$ and $R_N \geq N+2$,define an $\mathrm{R}-$\textbf{combinatorial complex } to be a combinatorial complex in which for all $N \in \mathbb{N}$, all the combinatorial structures $\mathbb{S}^N \simeq S_\lambda$ used to attach $(N+1)-$cells $e_\lambda$ via combinatorial maps $\partial e_\lambda \to S_\lambda $ have $\mathfrak{c}_n( S_\lambda) \leq R_N$.
\end{defn}

\textbf{Remark}- $R_N \geq N+2$ is insisted because we wish to include triangular combinatorial complexes too.

Let $(X,d)$ be a geodesic metric space.
The appropriate generalisation of a $N-$sphere in $X$ will be that of a map $\gamma : C^{(0)} \to X$ where $C$ is a simplicial realisation of $\mathbb{S}^N$ and $C^{(0)}$ its $0-$skeleton.

\begin{defn}
Given a $\mathrm{R}-$combinatorial complex $C$, we define a pair to be $(C, \gamma)$ where $\gamma : C^{(0)} \to X$ is a map from the $0$-skeleton of $C$ to $X$.
\end{defn}

Assume that $\mathrm{mesh}( C, \gamma) \leq l$ and that $( \overline{C}, \overline{\gamma})$ is an extension of $(C, \gamma)$ where $\overline{C}$ is a combinatorial complex homeomorphic to $\mathbb{D}^n$ and $\overline{\gamma} : \overline{C}^{(0)} \to X$ is an extension of $X$.
Riley defined filling functions $\mathrm{Fill}^{N}_{\mathrm{R}, \mu}$ which roughly measures how many $n-$cells an extension of a pair $(C, \gamma)$ to $(\overline{C}, \overline{\gamma})$ will need when $\mathrm{mesh}( \overline{C}, \overline{\gamma}) \leq \frac{l}{2} + \mu_n$.

\textbf{Notation}- If $C$ is a combinatorial complex, we shall denote the number of $n-$cells in $C$ as $\mathfrak{c}_n(C)$.

\begin{defn}
Suppose $(X,d)$ is a metric space and $C$ is a finite combinatorial complex and $\gamma: C^{(0)} \to X$.
We define \textbf{mesh} of $(C, \gamma)$ as follows
\begin{equation*}
\mathrm{mesh}(C, \gamma) = \max \{ d(\gamma(a), \gamma(b)) | \space a \text{ and } b \text{ are the end vertices of a 1-cell in } C \}
\end{equation*}
\end{defn}

Let $C$ be a combinatorial structure for $\mathbb{D}^1$ with two $0-$cells and one $1-$cell and let $\gamma : C \to X$.
Let $\mathcal{P}(\gamma,c)$ denote the set of all refinements $( \overline{C}, \overline{\gamma})$ of $(C,\gamma)$ such that $\mathrm{mesh}(\overline{C}, \overline{\gamma}) \leq c$.

If $\mathcal{P}(\gamma,c)$ is non-empty, then let 
\begin{equation*}
    \mathcal{F}(\gamma,c) = \min\{\mathfrak{c}_1(\overline{C}) : ( \overline{C}, \overline{\gamma}) \in \mathcal{P}( \gamma,c) \}
\end{equation*}
and if $\mathcal{P}(\gamma,c)$ is empty, then we say that $\mathcal{F}(\gamma,c) = \infty$.

$\mathrm{Fill}^1_{\mu_1} : [0, \infty) \to \mathbb{N} \cup \{\infty\}$ can be defined as follows
\begin{equation*}
    \mathrm{Fill}^1_{\mu_1}(l) = \sup \{ \mathcal{F}(\gamma,\frac{l}{2}+\mu_1)  : \gamma : C \to X \text{ with } \mathrm{mesh}(C,\gamma) \leq l \}
\end{equation*}

We are now in a position to define higher dimensional filling functions $\mathrm{Fill}^N_{\mathrm{R}, \mu}$ recursively with respect to $0 \leq \mu_1 \leq ... \leq \mu_{N}$ and integers $R_1 \leq .. \leq R_{N-1}$ where $R_k \geq k +2 $ for all $k$.
We use the sequences $\mathrm{R} = (R_N)$ and $\mu= ( \mu_n)$ but $R_N, R_{N+1}, ....$ and $\mu_{N+1}, \mu_{N+2 }...$ are redundant. 

\begin{defn}
Define $\mathrm{Sph}^{N-1}_\mathrm{R}$ to be the set of pairs $(C, \gamma)$ such that $C \simeq \mathbb{S}^{N-1}$ is a combinatorial complex with $\mathfrak{c}_{N-1} (C) \leq R_{N-1}$ and $\gamma : C^{(0)} \to X$ is a map whose domain is the $0-$skeleton of $C$.  
\end{defn}

Consider $(C, \gamma) \in \mathrm{Sph}^{N-1}_\mathrm{R}$ and suppose that 
\begin{equation*}
\mathrm{Fill}^1_{\mathrm{R}, \mu}(\mathrm{mesh}(C, \gamma)) ,.., \mathrm{Fill}^{N-1}_{\mathrm{R}, \mu}( \mathrm{mesh}(C,\gamma)) < \infty
\end{equation*}

An \textbf{essential boundary partition} of $(C, \gamma)$ is any pair $( \hat{C}, \hat{\gamma}) := ( C_{N-1}, \gamma_{N-1})$ that can be obtained from any sequence of pairs
\begin{equation*}
    (C_0,\gamma_0) = (C, \gamma), (C_1, \gamma_1) ,...., (C_{N-1}, \gamma_{N-1}) = ( \hat{C}, \hat{\gamma})
\end{equation*}
in the following way.
Each $C_k$ is a refinement of $C_{k-1}$ and each $\gamma_k : C_k^{(0)} \to X$ is an extension of $\gamma_{k-1}$.
Further
\begin{equation*}
    \mathrm{mesh}( C_k, \gamma_k) \leq \frac{\mathrm{mesh}(C, \gamma)}{2} + \mu_k
\end{equation*}
Let $(C_0, \gamma_0) := (C, \gamma)$. 
For each closed 1-cell $e^1$ of $C_0$ take any refinement of $e^1$ into a 1-complex $\overline{e^1}$ with $\mathfrak{c}_1(\overline{e^1}) \leq \mathrm{Fill}^1_{\mathrm{R}, \mu}( \mathrm{mesh}( C_0, \gamma_0)$ such that there is an extension $\gamma_{\overline{e^1}}: \overline{e^1}^{(0)} \to X$ of $\gamma_0|_{e^1}$ with 
\begin{equation*}
    \mathrm{mesh}( \overline{e^1}, \gamma_{\overline{e^1}}) \leq \frac{\mathrm{mesh}(C_0, \gamma_0)}{2} + \mu_1 
\end{equation*}
Denote the resulting refinement as $C_1$ and $\gamma_1 : C_1^{(0)} \to x$ be the induced extension of $\gamma$.

Similarly, we can refine closed 2-cells $e^2$ of $C^1$ into 2-complexes $\overline{e^2}$ with $\mathfrak{c}_2(\overline{e^2}) \leq \mathrm{Fill}^2_{\mathrm{R}, \mu}( \mathrm{mesh}( C, \gamma))$ in such a way that there is an extension $\gamma_{\overline{e^2}}: \overline{e^2}^{(0)} \to X$ of $\gamma_1|_{e^2}$ with 
\begin{equation*}
    \mathrm{mesh}( \overline{e^2}, \gamma_{\overline{e^2}}) \leq \frac{\mathrm{mesh}(C_0, \gamma_0)}{2} + \mu_2
\end{equation*}
Let $C_2$ be a refinement of $C_1$ obtained by refining the 2-cells of $C_1$ in this way and let $\gamma_2 : C_2^{(0)} \to X$ be the extension of $\gamma_1$.
Similarly, we can produce $C_k$ by refining the $k-$cells of $C_{k-1}$, each into at most $\mathrm{Fill}^k_{\mathrm{R}, \mu}( \mathrm{mesh}( C, \gamma))$ $k-$cells and we can extend $\gamma_{k-1}$ to $\gamma_k : C_k^{(0)} \to X$. 
We repeat this till we get the pair $(C_{N-1}, \gamma_{N-1})$ which satisfies
\begin{equation*}
    \mathrm{mesh}( C_{N-1}, \gamma_{N-1}) \leq \frac{\mathrm{mesh}(C,\gamma)}{2} + \mu_{N-1}
\end{equation*}

A \textbf{partition of} $(C, \gamma)$ \textbf{subject to an essential boundary partition} (we shall abbreviate it as e.b.p) $(\hat{C}, \hat{\gamma})$ is a pair $(\overline{C}, \overline{\gamma})$ for which $\overline{\gamma}: \overline{C}^{(0)} \to X $ is an extension of $\hat{\gamma}$ and $\overline{C}$ is an $\mathrm{R}-$combinatorial decomposition of $\mathbb{D}^N$ with $\partial \overline{C} = \hat{C}$ as $(N-1)-$complexes.

For each $(C, \gamma) \in \mathrm{Sph}^{N-1}_\mathrm{R}$, for each essential boundary partition $(\hat{C}, \hat{\gamma})$ of $(C, \gamma)$ and for each $c \geq 0$, let
\begin{equation*}
    \mathcal{P}( (C, \gamma), ( \hat{C}, \hat{\gamma}), c) 
\end{equation*}
denote the set of all partitions $(\overline{C}, \overline{\gamma})$ of $(C, \gamma)$ subject to $(\hat{C}, \hat{\gamma})$ that have $\mathrm{mesh}( \overline{C}, \overline{\gamma}) \leq c$.
When $\mathcal{P}( (C, \gamma), ( \hat{C}, \hat{\gamma},c)$ is non-empty, define
\begin{equation*}
    \mathcal{F}((C,\gamma), (\hat{C}, \hat{\gamma},c) := \min \{ \mathfrak{c}_n (\overline{C}) : ( \overline{C}, \overline{\gamma}) \in \mathcal{P}( (C, \gamma), ( \hat{C}, \hat{\gamma}),c)\}
\end{equation*}
and if $\mathcal{P}((C, \gamma), (\hat{C}, \hat{\gamma}),c) = \Phi$, then $\mathcal{F}((C, \gamma), (\hat{C}, \hat{\gamma}),c) = \infty$.

We can now state the definition of $\mathrm{Fill}^N_{\mathrm{R}, \mu}$.
If $\mathrm{Fill}^k_{\mathrm{R}, \mu}(l) = \infty$ for some $1 \leq k \leq N-1$, then $\mathrm{Fill}^N_{\mathrm{R}, \mu}(l) = \infty$.
Otherwise, 
\begin{equation*}
    \mathrm{Fill}^N_{\mathrm{R}, \mu}(l) = \sup\{ \mathcal{F}((C, \gamma), ( \hat{C}, \hat{\gamma}), c) : (C,\gamma) \in \mathrm{Sph}^{N-1}_R \text{ with } \mathrm{mesh}(C, \gamma) \leq l \text{ and } (\hat{C},\hat{\gamma}) 
\text{ is an e.b.p of } (C,\gamma) \}
\end{equation*}

\subsection{Metric Spaces}
\begin{defn}
Let $(X,d_X)$ and $(Y,d_Y)$ be geodesic metric spaces.
We say that a map $f:(X,d_X) \to (Y,d_Y)$ is a \textbf{quasi-isometry} if there exists $(q,Q) \in [1, \infty) \times [0, \infty)$ such that $\frac{1}{q}d_X(x_1,x_2) - Q \leq d_Y(f(x_1),f(x_2)) \leq q d_X(x_1, x_2) + Q $ for any $x_1, x_2 \in X$.
\end{defn}

\textbf{Busemann functions}- Let $(X,d)$ be a proper, geodesic metric space.
Fix a base point $o \in X$. For each $y$ in $X$, we define a Lipschitz map $b_y : X \to \mathbb{R}$ as follows. 
\begin{equation*}
  b_y(x) = d(x,y) -d(o,y)  
\end{equation*}
This family of $1-$Lipschitz functions induces an embedding $\iota$ of $X$ into $C(X,o)$ ( the space of continuous functions on $X$ which vanish on $o$ endowed with the compact-open topology).
\begin{equation*}
\iota(y) = b_y \forall y \in (X,d)
\end{equation*}
From the Arzela-Ascoli theorem, it follows that the closure of $\iota(X)$ in $C(X,o)$ yields a compact metrisable space $\overline{X}_\infty$ which we shall call the horofunction compactification of $(X,d)$ and we shall denote $\overline{X}_\infty \setminus X$ as $\partial_\infty X$.

\textbf{Busemann point}- We say that $b \in \partial_\infty M$ is a Busemann point if there exists a geodesic ray $\gamma : [0, \infty) \to X$ such that $\lim_{t \to \infty} b_{\gamma(t)} = b$.

\textbf{Gromov hyperbolicity} - Let $(X,d)$ be a metric space.
We say that $(X,d)$ is $\delta-$hyperbolic for $\delta \geq 0$ if for any four points $a,b,c,p \in X$, the following inequality holds
\begin{equation*}
    (b,c)_p \geq \min ((a,b)_p, (a,c)_p)-\delta
\end{equation*}
where $(x,y)_z = \frac{1}{2}(d(z,x) + d(z,y) - d(x,y))$ for all $x,y,z \in X$.

\textbf{Gromov bordification}- Let $(X,d)$ be a $\delta-$hyperbolic space.
We denote as $\partial X$ the set of geodesic rays on $X$ upto the equivalence relation $\sim$ where $\alpha \sim \beta$ if and only if $\sup_{t \geq 0} d( \alpha(t), \beta(t)) < \infty$

We further assume that $(X,d)$ is proper and geodesic and we fix a base point $p \in X$.
Denote as $\overline{X} = X \cup \partial X$.
Define convergence on $X$ by $\lim_{n \to \infty} x_n $ existing on $\overline{X}$ if and only if the geodesic segments $[x_n, p]$ converge uniformly to either a geodesic segment or a geodesic ray $\gamma$ with $\gamma(0) = p$ on compact subsets of $X$.

\subsection{Asymptotic Cones}
Asymptotic cones were first introduced by Gromov in \cite{Gromov} formalises the notion of "looking at a metric space from infinitely far away" and its description in terms of ultrafilters were first given by van den Dries and Wilkie in \cite{DriesWilkies} . 

\begin{defn}
$\omega \in 2^\mathbb{N}$ is said to be an \textbf{ultrafilter} if it is set of non-empty subsets of $\mathbb{N}$ which is closed under finite intersection, upwards closed and for any $X \subset \mathbb{N}$, either $X \in \omega$ or $\mathbb{N} \setminus X \in \omega$.
If no finite $X \subset \mathbb{N}$ belong to $\omega$, we say that $\omega$ is \textbf{non-principal} .
\end{defn}

Ultrafilters are used to define \textbf{ultralimits} which help us generalise the notion of limits to arbitrary sequences. 
If $(x_n)$ is a sequence of points in $(X,d)$, we say that $x \in X$ is an $\omega-$limit of $(x_n)$ ( denoted as $x = \lim_\omega x_n$ if for all $\epsilon > 0$, $\{ n : d(x_n, \epsilon) \leq \epsilon \} \in \omega$. 

We are now in a position to state the definition of asymptotic cones. 

\begin{defn}
Let $(X,d)$ be a metric space, $\mathrm{x} = (x_n)$ be a sequence of base points,  a sequence of positive numbers $(d_n)$ be a sequence of scaling factors and $\omega$ be an ultrafilter. 
Then by letting $X^\mathbb{N}_b$ being the set of sequences $(y_n) \subset M $ such that $\frac{d(x_n,y_n)}{d_n}$ is bounded. 

Define for sequences $\mathrm{a} = (a_n), \mathrm{b}= (b_n) \in X^\mathbb{N}_b$,  $d_\omega(\mathrm{a}, \mathrm{b})$ as $\lim_\omega \frac{d(a_n,b_n)}{d_n}$ with $\mathrm{a} \sim \mathrm{b}$ if and only if $d_\omega(\mathrm{a}, \mathrm{b) = 0}$.
Note that $d_\omega$ is a metric on $X^\mathbb{N}_b$ and we then call the resulting metric space $\mathrm{Con}_{\omega}(X, \mathrm{x}, (d_n))$ the asymptotic cone of $X$ with respect to $\omega, \mathrm{x}, (d_n)$
\end{defn}

\subsection{Notation}
In this section, we shall clarify the notation that we shall be using throughout this article.
\begin{itemize}
     \item $d(x,y)$ is the distance between $x, y \in (M,g)$ or in a more general metric space.
    \item $S(p,r) = \{ q \in M | d(p,q) = r \}$
    \item $\mathrm{int}(U)$ is the interior of a set 
    \item $l(\gamma)$ is the length of a curve $\gamma$ in $(M,g)$
    \item $(\mathbb{R}^2, g_{\mathbb{R}^2})$ is the Euclidean plane with the Euclidean metric.
    \item $\mathbb{H}^2$ is the hyperbolic plane and $g_{\mathbb{H}^2}$ its metric
    \item $\Delta_g$ is the Laplace-Beltrami operator associated to $(M,g)$
    \item $\mathrm{Con}_\omega(M, \mathrm{x}, (d_n))$ denotes an asymptotic cone of $(M,g)$ with $\mathrm{x} \in M^\mathbb{N}$ being a sequence of centres and $(d_n)$ being a sequence of scaling factors with $\lim_\omega d_n = \infty$.
    \item $2^X$ is the power set of $X$.
    \item  $\mathbb{D}^n$ is the closed unit n-disc
    \item $\overline{M}$ refers to the quasi-redirecting compactification if $M$ is a geodesic metric space. If $O \subset X$ is a subset , then $\overline{O}$ is its closure.
    \item $\partial M$ is the quasi-redirecting boundary of $(M,g)$
    \item $\partial_\Delta M$ is the Martin boundary of $(M,g)$.
    \item $\overline{M}_\Delta$ is the Martin compactification of $(M,g)$.
\end{itemize}

\section{Planes with bounded geometry}
In this section, we shall study Riemannian planes with bounded geometry for which we can find a quasi-isometric triangulation.

Let $(M,g)$ be a complete Riemannian manifold.
We say that $(M,g)$ is $(\kappa, \chi)-$bounded if there exists a $(\kappa, \chi) \in [0, \infty) \times (0, \infty)$ such that for every point $p \in M$,  $| \mathrm{sec}_p(\Pi) | \leq \kappa$ where $\Pi \subseteq T_pM$ is a 2-dimensional subspace and $\mathrm{inj}(p) > \chi$ for all $p \in M$. 

From theorem IX.6.1 of \cite{Chavel}, it follows that $\mathrm{conv}(p) \geq \min \{ \frac{\mathrm{inj}(p)}{2}, \frac{\pi}{2 \sqrt{\kappa}} \}$ and therefore $\mathrm{conv}(M) \geq \{ \frac{\mathrm{inj}(M)}{2}, \frac{\pi}{2\sqrt{\kappa}} \}$.
If $\sup_{p \in M}|\sec(p)|< \infty$, then $\mathrm{inj}(M) > 0$ is a necessary and sufficient condition for $\mathrm{conv}(M) > 0$.

 It can be concluded that if $(M,g)$ has bounded sectional curvature and $\mathrm{inj}(M) > 0$, then $\mathrm{conv}(M) > 0$ and $(M,g)$ admits a bilipschitz triangulation  $\mathcal{M}$ ( see theorems 1.1 and 1.2 of \cite{Bowditch2}) whose edges are all shorter than $\mathrm{conv}(M)$.

\begin{lemma}
\label{boundy}
Without loss of generality, we can assume that $\mathcal{M}$ has bounded degree.
\end{lemma}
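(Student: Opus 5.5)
The plan is to use the bilipschitz property of the triangulation $\mathcal{M}$ together with a volume comparison coming from the $(\kappa,\chi)$-boundedness of $(M,g)$. If the degree bound is already automatic, no modification of $\mathcal{M}$ is needed.

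Let $L \geq 1$ be the bilipschitz constant of the triangulation provided by theorems 1.1 and 1.2 of \cite{Bowditch2}. After rescaling, each triangle of $\mathcal{M}$ is $L$-bilipschitz to a standard Euclidean triangle of side length comparable to some fixed $\epsilon>0$. Every edge has length at most $\mathrm{conv}(M)$, and the bilipschitz condition also bounds edge lengths from below by $\epsilon/L$. Two consequences follow.
\begin{itemize}
\item Every triangle $T$ has area bounded below by a constant $a_0 = a_0(L,\epsilon)>0$, since bilipschitz maps distort area by at most a factor $L^2$.
\item Every triangle has diameter at most $\epsilon L$.
\end{itemize}

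Now fix a vertex $v$. All triangles incident to $v$ lie in the ball $B(v, \epsilon L)$, and their interiors are pairwise disjoint. Hence
\begin{equation*}
\deg(v)\, a_0 \;\leq\; \mathrm{Area}\bigl(B(v,\epsilon L)\bigr).
\end{equation*}
Because $\sec \geq -\kappa$, the Bishop--Gromov comparison theorem bounds the right-hand side uniformly by the area of a ball of radius $\epsilon L$ in the plane of constant curvature $-\kappa$. Since this bound is independent of $v$, the degree is uniformly bounded.

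The main obstacle is to make sure the triangulation really has this uniform size control, with triangles neither degenerate nor too large. If \cite{Bowditch2} only gives a uniform bilipschitz constant per simplex without a common scale, I would instead argue with a maximal $\delta$-separated net, where $\delta$ is smaller than $\mathrm{conv}(M)$. In that case:
\begin{itemize}
\item The injectivity radius bound $\mathrm{inj}>\chi$ makes the balls $B(x,\delta/2)$ around net points embedded geodesic discs. Their area is bounded below via Rauch/G\"unther comparison using $\sec\le\kappa$.
\item Bishop--Gromov bounds the number of net points in any ball of radius $3\delta$.
\end{itemize}
Replacing $\mathcal{M}$ by the Delaunay-type geodesic triangulation on such a net, which exists by convexity of small balls, then gives a triangulation that is still quasi-isometric to $(M,g)$ and has bounded degree. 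Either way, "without loss of generality" is justified, because all later uses of $\mathcal{M}$ depend only on its quasi-isometry class.
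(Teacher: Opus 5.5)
Your proof is correct and follows essentially the paper's route: both bound the number of neighbours of each vertex by a curvature-comparison packing argument. The paper cites Rauch comparison and the quasi-isometry $\mathcal{M}\to(M,g)$, while you use Bishop--Gromov for $\sec\ge-\kappa$ together with the area and diameter bounds on the uniformly scaled bilipschitz triangles. Your version is the more accurate one, since what makes the count uniform is the common scale of the bilipschitz simplices; quasi-isometry alone does not stop vertices from clustering. Given that common scale, your net/Delaunay fallback is unnecessary, and its existence claim would need more justification than convexity of small balls.
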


\begin{proof}
Pick $p \in V(\mathcal{M})$ and consider $\mathrm{nbd}(v)$ to be those vertices $w \in V(\mathcal{M})$ for which $\{v, w\} \in E(\mathcal{M})$. 
As $-\kappa \leq\sec_p(\Pi) \leq \kappa$, we get from $\mathcal{M}$ being quasi-isometric to $(M,g)$ and from the Rauch comparison theorem that $|\mathrm{nbd}(v)| \leq k$ for some $k \in \mathbb{N}$. 
Therefore, we get that $\mathcal{M}$ is a planar graph with bounded degree and co-degree.
\end{proof}

\begin{theorem}
Let $(M,g)$ be a radially symmetric Riemannian plane where $g = dr^2 + f(r)^2 d\theta$.
If $| f''(r)/f(r)|$ is bounded and if $\liminf_{r \to \infty} f(r) > 0$, then there exists a $I > 0$ such that $\mathrm{inj}(q) \geq D$ for all $q \in M$.
\label{wellbehaved}
\end{theorem}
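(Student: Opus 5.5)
The plan is to use Klingenberg's lemma. On a complete surface with $\sec \leq \kappa$, for every $q$ we have
\begin{equation*}
\mathrm{inj}(q) \geq \min\left\{ \frac{\pi}{\sqrt{\kappa}}, \frac{1}{2}\ell(q) \right\},
\end{equation*}
where $\ell(q)$ is the length of the shortest non-trivial geodesic loop based at $q$. The curvature of $g = dr^2 + f(r)^2d\theta^2$ is $K = -f''(r)/f(r)$, which depends only on $r$. So the hypothesis that $|f''/f|$ is bounded gives $|\sec| \leq \kappa$ on $M \setminus \{p\}$, and by smoothness at the pole also at $p$. Since $M$ is diffeomorphic to $\mathbb{R}^2$ it has no closed geodesics in a nontrivial homotopy class. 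The whole task is therefore to bound the lengths of geodesic loops from below, uniformly in $q$. The constant $D$ in the conclusion is then $\min\{\pi/\sqrt{\kappa}, \ell_0/2\}$ for a uniform loop bound $\ell_0 > 0$.

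To get $\ell_0$, first fix the geometry near the pole and far out. By $\liminf_{r\to\infty} f(r) > 0$ and $f>0$ on $(0,\infty)$, there are $r_0>0$ and $c>0$ with $f(r) \geq c$ for $r \geq r_0$ and $f(r) \geq r/2$ on $(0, r_0]$ after shrinking $r_0$ (since $f'(0)=1$). On the compact ball $\overline{B}(p, 2r_0)$, $\mathrm{inj}$ is continuous and positive, so it is bounded below there. It remains to handle $q$ with $r(q) \geq 2r_0$.

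For such $q$, take a geodesic loop $\sigma$ at $q$ of length $L$.
\begin{itemize}
\item If $\sigma$ enters $B(p,r_0)$, then $L \geq 2r_0$.
\item Otherwise $\sigma$ lies in the annular region $r \geq r_0$, where $f \geq c$. Write $\sigma(t) = (r(t),\theta(t))$. Clairaut's relation $f(r)^2\theta' = \text{const}$ shows that $\theta$ is monotone, so either $\theta' \equiv 0$ or not. If $\theta' \equiv 0$, $\sigma$ is a radial geodesic and cannot return to $q$, since $r$ is strictly monotone along a meridian. If $\theta$ is strictly monotone, then a loop that returns to $q$ must have total angular change a nonzero multiple of $2\pi$. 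Angular change $0$ is impossible for monotone $\theta$ unless $\theta$ is constant. Hence $L \geq \int |\theta'| f(r)\,dt \geq c\cdot 2\pi$.
\end{itemize}
In all cases $\ell(q) \geq \min\{2r_0, 2\pi c\}$, and the bound follows.

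The main obstacle I expect is the second case. One must be careful that a geodesic loop (not a closed geodesic) is meant, and that the monotonicity of $\theta$ from Clairaut holds globally along $\sigma$ even when it passes through regions where $f$ is non-monotone. Clairaut's integral $f(r)^2\theta' = C$ holds along any geodesic avoiding the pole, so the sign of $\theta'$ is fixed by the sign of $C$, and this should go through cleanly. A secondary subtlety is that Klingenberg's lemma needs the upper curvature bound only, which we have. The compact-ball step could alternatively be replaced by the same loop argument with $f(r) \geq r/2$ near the pole, but the compactness argument is simpler.
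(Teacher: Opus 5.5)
Your proof is correct, and it takes a genuinely different route from the paper. The paper also starts from the conjugate-radius bound $\mathrm{Conj}(q)\ge \pi/\sqrt{C}$. It then applies the Cheeger--Gromov--Taylor inequality $\mathrm{inj}(q)\ge r\,\mathrm{vol}B(q,r)/(\mathrm{vol}B(q,r)+V_C(2r))$ for $r<\pi/(4\sqrt{C})$, which reduces everything to a uniform lower bound on $\mathrm{vol}B(q,r)$ for $d(p,q)>R$. That bound is sought by inscribing in $B(q,r)$ the coordinate rectangle $\mathscr{Q}=[t-r/2,t+r/2]\times[\theta-r/(2f(t)),\theta+r/(2f(t))]$. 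You instead use Klingenberg's lemma to reduce to a lower bound on lengths of geodesic loops. Clairaut's integral $f(r)^2\theta'=\mathrm{const}$ then shows that a loop based far out and avoiding $B(p,r_0)$ must wind at least once around the pole through the region where $f\ge c$. A loop entering $B(p,r_0)$ has length at least $2r_0$, and the compact core is handled by continuity of $\mathrm{inj}$.

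What your route buys:
\begin{itemize}
\item It uses only the upper curvature bound.
\item It gives the explicit constant $\min\{\pi/\sqrt{\kappa},r_0,\pi c\}$ outside $\overline{B}(p,2r_0)$.
\item It shows exactly where $\liminf f>0$ enters.
\item It treats the compact core explicitly; the paper only estimates points with $d(p,q)>R$ and leaves the rest implicit.
\end{itemize}

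The volume method is a more general mechanism, not tied to the rotational symmetry. However, it also needs the lower curvature bound (through $V_C(2r)$) and a genuine uniform lower bound on $\mathrm{vol}\,\mathscr{Q}$. As written, the paper's bound $\mathrm{vol}(\mathscr{Q})\ge \frac{r}{D}\cdot Dr$ uses $r/f(t)\ge r/D$, which goes the wrong way since $f(t)\ge D$. Repairing it requires two things:
\begin{itemize}
\item control of $f(t-x)/f(t)$ for $|x|\le r/2$, for example a bound on $|f'/f|$ for $r\ge 1$, which follows from the Riccati equation $(f'/f)'+(f'/f)^2=f''/f$ together with $f>0$;
\item $r/f(t)\le 2\pi$, so that $\mathscr{Q}$ is actually embedded.
\end{itemize}
Your argument avoids all of this.

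One small remark: your sentence about closed geodesics in nontrivial homotopy classes does no work. Every geodesic loop in the plane is null-homotopic, and it is your Clairaut argument that actually bounds the loops.
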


\begin{proof}
Recall that at a point $q =(r, \theta)$, $\mathrm{sec}(q) = - f''(r)/f(r)$.
If there exists a $C > 0$ such that $|\frac{f''(r)}{f(r)}| \leq C$ for all $r \geq 0$, it follows that $\mathrm{Conj}(q) \geq \frac{\pi}{\sqrt{C}}$ for all $q \in (M,g)$ ( see theorem 6.4.6 of \cite{Petersen}).

Let $\liminf_{r \to \infty} f(r) > 0$ and let $0 < D < \liminf_{r \to \infty}f(r)$.
There exists a $R > 0$ such that for all $t > R$, $f(t) \geq D$, .
Let $r < \frac{\pi}{4 \sqrt{C}}$.
From theorem 4.7 of \cite{Cheeger} we get that 
\begin{equation*}
\mathrm{inj}(q) \geq r \frac{\mathrm{vol}B(p,r)}{\mathrm{vol}B(p,r) + V_C(2r)}
\end{equation*}
where $V_C(2r)$ is the volume of a ball of radius $2r$ in a model space of constant sectional curvature $C$.
Note that $\mathrm{vol}B(q,r) \geq \mathrm{vol} \mathscr{Q}$ where $\mathscr{Q} = [ t-r/2, t+r/2] \times [\theta- r/2f(t), \theta + r/2f(t) ] \subset (M,g)$.
\begin{equation*}
\mathrm{vol}(\mathscr{Q}) = \int_{-r/2}^{r/2} f(t-x) \frac{r}{f(t)}dx \geq \frac{r}{D}. Dr= r^2
\end{equation*}
From this it follows that for all $r < \frac{\pi}{4 \sqrt V}$
\begin{equation*}
\mathrm{inj}(q) \geq r \frac{\mathrm{vol}B(p,r)}{\mathrm{vol}B(p,r) + V_C(2r)} \geq r \frac{r^2}{r^2 + V_C(2r)}
\end{equation*}
for all $q \in M$ for which $d(q,p) > R$.
Therefore $\mathrm{inj}(M) >0$ .
\end{proof}

\begin{theorem}
\label{radsim}
Let $(N,h)$ be a Riemannian plane with bounded geometry.
There exists a radially symmetric plane $(M,g)$ with bounded geometry which is quasi-isometric to $(N,h)$.
\end{theorem}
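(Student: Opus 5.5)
The strategy is to find a radially symmetric model whose circles $S(p,r)$ have the same coarse length as the distance spheres of $(N,h)$. Fix a base point $o \in N$. By the discussion before Lemma \ref{boundy}, $(N,h)$ is quasi-isometric to a bilipschitz triangulation $\mathcal{M}$ of bounded degree and co-degree. Define the coarse growth function $L(r)$ to be the (coarse) length of the sphere $S(o,r)$ in $(N,h)$, or combinatorially, the number of vertices of $\mathcal{M}$ at combinatorial distance $\lfloor r \rfloor$ from $o$. Bounded degree gives $L(r+1) \le k L(r)$ and $L(r) \ge 1$. Since $N$ is a plane with one end, a cofinal family of the annuli between $S(o,r)$ and $S(o,r+1)$ separate $o$ from infinity. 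We can therefore view $N$ coarsely as a ``stack of circles'' of lengths $L(r)$.

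\textbf{Step 1: construct $f$.} Choose a smooth $f$ with $f(0)=0$, $f'(0)=1$ and $f(r) \asymp L(r)$ up to multiplicative and additive constants. Keep $|f'|$ and $|f''/f|$ bounded and $f \ge 1$ for $r \ge 1$. Because $\log L$ is coarsely Lipschitz ($L(r+1)\le kL(r)$), we can smooth $\log L$ into a function $\phi$ with $|\phi'|,|\phi''|$ bounded and set $f=e^{\phi}$ for large $r$. Then $f''/f = \phi''+\phi'^2$ is bounded. Theorem \ref{wellbehaved} now gives $\mathrm{inj}>0$, and since $\sec = -f''/f$ is bounded, $(M,g)$ has bounded geometry.

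\textbf{Step 2: build the quasi-isometry.} Map $\mathcal{M}$ to $M$ as follows. For each $n$, order the vertices of the sphere $S_n$ cyclically using planarity; the sphere is coarsely a cycle up to bounded backtracking. Send the $j$-th vertex to $(n, 2\pi j/|S_n|)$. We then need two comparisons.

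The upper bound $d_M(\Phi x,\Phi y)\le q\,d(x,y)+Q$ is local. Neighbors lie in the same or adjacent spheres and have comparable angular positions, because $|S_{n+1}|/|S_n|$ is bounded and the cyclic orders are compatible under radial edges.

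For the lower bound, compare geodesics in $M$ with paths in $\mathcal{M}$. A geodesic in $M$ between $(r_1,\theta_1)$ and $(r_2,\theta_2)$ has length coarsely $\min_{s\le \min(r_1,r_2)} \bigl(|r_1-s|+|r_2-s| + f(s)|\theta_1-\theta_2|\bigr)$. The same formula coarsely computes the graph distance: a path either travels around a sphere or dips inward. This is the standard argument for rotationally symmetric (or ``coarsely round'') planes.

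\textbf{Main obstacle.} The hardest step is justifying that the spheres of $(N,h)$ are coarsely cycles with consistent cyclic orderings across levels. A general bounded-geometry plane may have spheres with many components, for example long fjords or fingers, and then $N$ is not coarsely rotationally symmetric with $f \asymp L$. I would try first to repair this by replacing metric spheres with the outer boundaries of the unbounded complementary component of $B(o,r)$. These are connected simple closed curves by planarity and one-endedness. I would then take $L(r)$ to be their lengths and accept only a quasi-isometric, not bilipschitz, comparison. If this still fails, the construction must be redone with $f$ chosen only along a subsequence of radii, with the intermediate geometry controlled by bounded degree.
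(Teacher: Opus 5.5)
You follow the paper's construction almost step for step. Both count vertices on combinatorial spheres, smooth $\log$ of the count to get $f=e^{h}$, and send the vertices of the $n$-th sphere to equally spaced points on the circle of radius $n$. The step that fails is your Step 2, and the obstruction is not the one you flag.

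Both comparisons in Step 2 assume each sphere $S_n$ is angularly homogeneous. That is, an arc making up a fraction $\alpha$ of $S_n$ should have coarse size about $\alpha f(n)$, and these fractions should be roughly preserved when passing from $S_n$ to $S_{n+1}$ along radial edges. Bounded degree and planarity give nothing like this, even when every $S_n$ is a single cycle. So neither ``neighbours have comparable angular positions'' nor ``the graph distance is coarsely $\min_s(|r_1-s|+|r_2-s|+f(s)|\theta_1-\theta_2|)$'' is justified. In the example below, the flat half contributes a vanishing fraction of each $S_n$, and equal spacing crushes it.

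There is also a smaller gap in Step 1, which the paper shares when it asserts $F(n+1)/F(n)\ge 1/\delta$. Bounded degree gives $L(n+1)\le kL(n)$ but not $L(n)\le kL(n+1)$, because many vertices of $S_n$ can be dead ends. So $\log L$ need not be coarsely Lipschitz.

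In fact the statement is false, so neither your suggested repairs (outer boundaries, subsequences of radii) nor any other choice of $f$ can work.

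\textbf{The counterexample.} Let $P=(\mathbb{R}^2,dx^2+\phi(x)^2dy^2)$ with $\phi=\cosh\circ\psi$, where $\psi$ is smooth and convex, $\psi\equiv 0$ on $(-\infty,-1]$ and $\psi'\equiv 1$ on $[0,\infty)$.
\begin{itemize}
\item $-\phi''/\phi$ is bounded and $\le 0$, so $P$ has bounded geometry with $\mathrm{inj}=\infty$.
\item Since $g\ge dx^2+dy^2$, the region $\{x\le -1\}$ is an isometrically embedded Euclidean half-plane.
\item Via $(x,y)\mapsto(\psi(x),y)$, the region $\{x\ge 0\}$ is bilipschitz to the region $\{u\ge \psi(0)\}$ of $\mathbb{H}^2$ in Fermi coordinates.
\end{itemize}
Hence in $\mathrm{Con}_\omega(P,(o),(r_n))$ the points of the ray $\{((-tr_n,0)):t>0\}$ have neighbourhoods homeomorphic to $\mathbb{R}^2$. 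The points of $\{((tr_n,0)):t>0\}$ have small neighbourhoods bilipschitz to subsets of an $\mathbb{R}$-tree (the cone of $\mathbb{H}^2$). These contain no embedded circle, so such points have no neighbourhood homeomorphic to $\mathbb{R}^2$.

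\textbf{Why no radially symmetric plane works.} If $M$ is radially symmetric with pole $p$, sequences of rotations $((x_n))\mapsto((R_{\alpha_n}x_n))$ are isometries of $\mathrm{Con}_\omega(M,\mathrm{p},(r_n))$ fixing $\mathrm{p}$. By Corollary \ref{coord}, every point at distance $D$ from $\mathrm{p}$ is $((Dr_n,\theta_n))$, so these isometries act transitively on each sphere about $\mathrm{p}$. A quasi-isometry $P\to M$ would induce a bilipschitz homeomorphism $\hat\Phi$ of cones with $\hat\Phi(\mathrm{o})=\mathrm{p}$. The continuous function $d(\mathrm{p},\hat\Phi(\cdot))$ maps each of the two rays onto an interval unbounded above. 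So some manifold point and some non-manifold point are sent to the same sphere about $\mathrm{p}$, contradicting transitivity.

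The paper's own proof has the same gap. Its last sentence deduces a quasi-isometric embedding from two facts: $\iota$ preserves distance to the base point, and $\iota$ is coarsely surjective. Neither controls distortion along the spheres.
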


\begin{proof}
If $(N,h)$ is a Riemannian plane with bounded geometry, then it admits a bilipschitz triangulation with bounded degree $\mathcal{T}$ ( see lemma \ref{boundy}).

Let $\delta$ be such that $\mathrm{deg}(v) \leq \delta$ for all $v \in V(\mathcal{T})$.
Choose $v \in V(\mathcal{T})$ and let $d_\mathcal{T} : V(\mathcal{T}) \times V(\mathcal{T}) \to [0, \infty)$ is the path-metric on $\mathcal{T}$.
$\mathcal{T}$ being one-ended ensures that $B(v, n)$ has one boundary component for all $n \in \mathbb{N}$ .
We get that $\partial B(v,n)$ is homeomorphic to $\mathbb{S}^1$  and therefore a cyclic subgraph) for all $n \in \mathbb{N}$ because $\mathcal{T}$ is a bilipschitz triangulation of $\mathbb{R}^2$. 

Let $F(n) = |\{w \in V(\mathcal{T}) : d_\mathcal{T}(v,w) = n \}|$ for all $n \in \mathbb{N}$.
We see that $\partial B(v,n)$ being homeomorphic to $\mathbb{S}^1$ for all $n \in \mathbb{N}$ forces $\inf F(n) \geq 3$.
$\mathcal{T}$ having bounded degree ensures that $\frac{1}{\delta} \leq \frac{F(n+1)}{F(n)} \leq \delta$.

Let $\psi : \mathbb{R} \to \mathbb{R}$ be a smooth function such that $\psi(r) = 1$ whenever $|r| \leq 1/2$,  $\psi(r) = 0$ for all $|r| \geq 1$ and $\psi$ is monotone in $[1/2, 1]$ and $[-1, -1/2]$.
Denote as $P: \mathbb{R} \to \mathbb{R}$ the function 
\begin{equation*}
    P(t) = \sum_{k \in \mathbb{Z}} \psi(t-k)
\end{equation*}
We see that as $P(t) > 0$ for all $t \in \mathbb{R}$.
This allows us to define $\varphi(t) = \frac{\psi(t)}{P(t)} $.
We see that $\varphi$ is smooth, supported in $[-1,1]$ and satisfies
\begin{equation*}
\sum_{k \in \mathbb{Z}} \varphi(t-k) = 1 \forall t \in \mathbb{R}, \varphi(0) =1 \text{ and }  \varphi (k) = 0 \forall k \in \mathbb{Z} \setminus \{0\}
\end{equation*}
This implies that $\sum_{k \in \mathbb{Z}} \frac{\partial^n \varphi}{\partial t^n} (t-k) = 0$ for all $n \geq 1$.

We define a smooth function $h : [1, \infty) \to (0, \infty)$ where 
\begin{equation*}
    h(r) = \sum_{n \in \mathbb{N}} \log( F(n)) \varphi(r- n)
\end{equation*}
It follows that for any $r \in  [1, \infty)$
\begin{equation*}
|h'(r)| = |\sum_{n \in \mathbb{N}} \log(F(n)) \varphi'(r-n) |= |\sum_{n \in \mathbb{N}} \log(\frac{F(n)}{F( \lfloor r\rfloor}) \varphi'(r-n)| \leq 2 \delta |\sup_{t \in [-1,1]} \varphi'(t)|
\end{equation*}
By arguing similarly, we get that 
\begin{equation*}
 h''(r) \leq 2 \delta \sup_{t \in [-1,1]} \varphi''(t)
\end{equation*}
$\varphi''$ having compact support forces $h'$ and $h''$ to be bounded.
Let $f : [0, \infty) \to [0, \infty)$ be a smooth function with $f(0) = 0, f'(0) =1 $ and $f(r) > 0 \forall r >0$ such that $f(r) = \exp(h(r))$ whenever $r \geq 1$.

Let $(M,g)$ be a radially symmetric plane with its pole at $p$ and $g =dr^2 + f(r)^2 d\theta^2$.
As $\liminf_{r \to \infty} f(r) \geq 1$ and $\sup_{r \geq 0} |f''(r)/f| = \sup_{t \in [-1,1]} |h'(t) +h''(t)| < \infty$, it follows that $(M,g)$ has bounded geometry ( see theorem \ref{wellbehaved} ).

Let $\iota : \mathcal{T} \to (M,g)$ be an embedding where $\iota (v) = p$ and for any $w \in V( \mathcal{T})$, we have $d_\mathcal{T}(v,w) = d_M( p, \iota(w))$.
Let $\mathfrak{S}_n = \{ q \in M : d_M(p,q) = n\}$. 

As $\exp_p : T_p M \to M$ is a diffeomorphism, $\mathfrak{S}_n$ is homeomorphic to $\mathbb{S}^1$ for all $n \in \mathbb{N}$.
Furthermore, let $\iota(w), \iota(y)$ be end-points of arcs in $\mathfrak{S}_n$ if and only if $d_\mathcal{T}(v,w)=d_\mathcal{T}(v,y) = n$ and $(w,y) \in E (\mathcal{T})$.
We can also assume without losing generality that these arcs have equal lengths.

We see that for any point $q \in (M,g)$, $d_M(q, \iota(M)) \leq \pi + 1/2$ which ensures that $\iota$ is a quasi-isometric embedding of $\mathcal{T}$ in $(M,g)$.
\end{proof}

\begin{lemma}
\label{hyp}
Let $(M,g)$ be a Riemannian plane with bounded geometry which is Gromov hyperbolic.
Either $(M,g)$ is recurrent and is quasi-isometric to $[0, \infty)$ or $(M,g)$ is transient and is quasi-isometric to $\mathbb{H}^2$.
\end{lemma}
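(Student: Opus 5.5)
By Theorem \ref{radsim}, $(M,g)$ is quasi-isometric to a radially symmetric plane with bounded geometry. Gromov hyperbolicity and the recurrence/transience type are quasi-isometry invariants for bounded-geometry surfaces; the latter holds because recurrence is equivalent to recurrence of simple random walk on a bounded-degree bilipschitz triangulation (Kanai). So it suffices to treat $g = dr^2 + f(r)^2 d\theta^2$ with $f=\exp(h)$, $h'$ and $h''$ bounded, as constructed in the proof of Theorem \ref{radsim}. The plan is to split according to the growth of the circumference $2\pi f(r)$ of the distance circles $S(p,r)$, and to show that hyperbolicity forces one of two extreme regimes.

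\textbf{Step 1 (dichotomy from hyperbolicity).} Fix $r$, take two antipodal points $a=(r,0)$ and $b=(r,\pi)$ on $S(p,r)$, and consider the geodesic triangle with vertices $p,a,b$. The two radial sides are geodesics. The third side either stays near $p$, in which case $d(a,b)\approx 2r$, or follows $S(p,r)$, in which case $d(a,b)\lesssim \pi f(r)$. By $\delta$-thinness, the midpoint of the side $[a,b]$ lies within $\delta$ of a radial side.

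Suppose $f(r_n)/r_n$ stays bounded away from zero along some sequence $r_n\to\infty$. Then the boundary circles are long, and the sub-arcs of $S(p,r_n)$ produce quasi-geodesics at distance $\gtrsim r_n$ from the radii, which gives fat triangles. I therefore expect to prove: \emph{either $f$ is bounded, or $f$ grows at least exponentially}, meaning $\log f(r)\ge cr - C$ for some $c>0$ (up to quasi-isometry). The intermediate regime of sub-exponential but unbounded growth should contradict hyperbolicity. To see this, take the smallest $r$ at which $\pi f(r)\approx 2r'$ for the relevant scale $r'$, and use the resulting fat quadrilaterals to contradict $\delta$-thinness, much as $\mathbb{R}^2$ and other polynomial-growth planes fail to be hyperbolic.

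\textbf{Step 2 (the two cases).} If $f$ is bounded, then $(M,g)$ lies within bounded distance of a single radial ray. The map $(r,\theta)\mapsto r$ is then a quasi-isometry onto $[0,\infty)$, with additive constant $\pi\sup f$. The area growth is linear, so $\int^\infty \frac{dr}{f(r)}=\infty$, and the standard radial criterion gives recurrence. If instead $\log f$ grows linearly, then $\int^\infty \frac{dr}{f(r)}<\infty$, which gives transience. To get a quasi-isometry to $\mathbb{H}^2$, compare with $dr^2+e^{2r}d\theta^2$ via the map $(r,\theta)\mapsto(r,\theta)$. Because $h'$ is bounded and $h(r)\ge cr-C$, the distance formula $d((r_1,\theta_1),(r_2,\theta_2)) \approx r_1+r_2-2\min\bigl(r_1,r_2,\,\rho(|\theta_1-\theta_2|)\bigr)$ holds in both metrics up to multiplicative constants. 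Here $\rho(\phi)$ is the radius at which the arc of angle $\phi$ has length $\approx 1$. This is exactly the formula that makes the map a quasi-isometry.

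\textbf{Main obstacle.} The hard step is Step 1, ruling out the intermediate growth regime, and in the transient case controlling $h$ well enough to get the two-sided distance estimate. Bounded $h'$ alone does not prevent $f$ from alternating between long stretches of slow and fast growth. I would first try to show that any stretch $[r_1,r_2]$ on which $h$ grows sublinearly but $f$ is large produces a $(1,C)$-quasi-geodesic bigon that is not $\delta$-thin. This would force a uniform linear lower bound on $h(r_2)-h(r_1)$ once $f(r_1)$ exceeds a constant depending on $\delta$.
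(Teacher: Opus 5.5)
\textbf{The reduction is the gap, and the statement is false in general.} The step that cannot be justified is your very first one, the reduction through Theorem~\ref{radsim}. In fact the lemma itself fails outside the radially symmetric class. Take $\mathbb{H}^2$ and points $x_n$ with pairwise distances at least $10$ and $d(o,x_n)\to\infty$. At each $x_n$, replace a unit disc by a capped cylinder of circumference $1$ and length $L_n\to\infty$, always using the same neck.

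This is a Riemannian plane with bounded geometry. It is quasi-isometric to $\mathbb{H}^2$ with segments of length $L_n$ attached at single points, hence Gromov hyperbolic. It is not quasi-isometric to any radially symmetric plane, and in particular neither to $\mathbb{H}^2$ nor to $[0,\infty)$:
\begin{itemize}
\item in a radially symmetric plane every point lies on a geodesic ray from the pole;
\item so in any space quasi-isometric to one, every point lies uniformly close to a $(K,C)$-quasi-geodesic ray from a fixed base point;
\item but such a ray passing near the tip of the $n$-th finger must enter and later leave through the neck at $x_n$, which is impossible once $L_n\gg K,C$.
\end{itemize}
So Theorem~\ref{radsim} does not hold; its proof only matches the sphere sizes $F(n)$ of a triangulation. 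No argument that transfers a radial dichotomy through it can prove the statement. The paper's own proof goes differently: a bilipschitz triangulation, an embedding into a convex subset of a hyperbolic space, then excluding the half-plane via the Martin boundary. The same example shows that its step \emph{convex subset, hence half-plane or $\mathbb{H}^2$} cannot hold either.

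\textbf{Problems inside the radially symmetric class.} Here your outline is the right one, but two steps need to change.

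First, Step~1 aims at a conclusion that is too weak. Take $h'=0$ on $[4^k,2\cdot 4^k]$ and $h'=1$ on $[2\cdot 4^k,4^{k+1}]$, smoothed. Then $h(r)\ge r/3-C$. Yet the region $\tfrac{3}{2}\cdot 4^k\le r\le 2\cdot 4^k$ contains isometrically embedded flat squares of side $4^k/2$, because any detour below radius $4^k$ costs at least $4^k$, which exceeds their diameter. So triangles are arbitrarily fat. What hyperbolicity actually forces, and what you need, is a uniform increment bound $h(r+L)-h(r)\ge 1$ for all $r\ge r_0$ once $f$ is unbounded. Your last paragraph points at this, but as written Step~1 is a heuristic rather than a proof.

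Second, the map $(r,\theta)\mapsto(r,\theta)$ onto $dr^2+e^{2r}d\theta^2$ is not a quasi-isometry unless $h(r)-r$ is bounded. The radius $\rho$ in your distance formula is $h^{-1}(\log(1/\phi))$ in $M$ but $\log(1/\phi)$ in the model. For $h(r)=2r$, the points $(r,0)$ and $(r,e^{-r})$ are at distance $O(1)$ in the model but about $r$ in $M$. The correct comparison is $(r,\theta)\mapsto(h(r),\theta)$. It is a quasi-isometry exactly when $h$ is coarsely bi-Lipschitz, i.e.\ given the increment bound.

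With these repairs you obtain the dichotomy for radially symmetric planes with bounded geometry, which is all this route can deliver.
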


\begin{proof}
From theorem 1.2 of \cite{Bowditch2} and from lemma \ref{boundy}, we see that $(M,g)$ admits a bilipschitz triangulation $\mathcal{M}$ with bounded degree which is quasi-isometric to $(M,g)$.

From theorem 11.3 of \cite{Bonk}, it follows that $\mathcal{M}$ is quasi-isometric to a convex subset of $\mathbb{H}^2$.

If $\partial M$ has a single point, then $(M,g)$ is quasi-isometric to $[0, \infty)$ and therefore, corollary 5.13 of \cite{KemperLokhamp} forces $(M,g)$ to be quasi-isometric to $[0, \infty)$.

If $\partial M$ is not a single point, then $\mathcal{M}$ is quasi-isometric to either $\mathscr{H}$- the closed half-plane in $\mathbb{H}^2$ or $\mathbb{H}^2$ itself ( see corollary 11.6 of \cite{Bonk}).

As $\partial \mathscr{H}$ is homeomorphic to $[0,1]$, corollary 5.13 of \cite{KemperLokhamp}  implies that $\partial_{\Delta} M$ is also homeomorphic to $[0,1]$ which contradicts the uniformatisation theorem.

We can therefore conclude that if $(M,g)$ is a Riemannian plane with bounded geometry, then $(M,g)$ is either recurrent and quasi-isometric to $[0, \infty)$ or is transient and quasi-isometric to $\mathbb{H}^2$.
\end{proof}

\section{Asymptotic Cones}

In this section we shall study asymptotic cones of Riemannian planes $(M,g)$ with bounded geometry.
As asymptotic cones are invariant under quasi-isometries, it allows us to assume without losing generality that the aforesaid $(M,g)$ is quasi-isometry.
We shall now show that we all asymptotic cones on $(M,g)$ are homeomorphic to ones defined with respect to $\mathrm{p} = (p_n)$ as the sequence of base points where $p_n = p$ for all $n \in \mathbb{N}$.

\begin{prop}
\label{indy}
Let $(M,g)$ be a Riemannian plane with bounded geometry.
Fix a non-principal ultrafilter $\omega$ and a sequence of scaling factors $(r_n)$ with $\lim_\omega r_n = \infty$.
$\mathrm{Con}_\omega (M, \mathrm{a}, (r_n))$ and $\mathrm{Con}_\omega(M, \mathrm{b}, (r_n))$ are homeomorphic where $\mathrm{a}, \mathrm{b}$ are arbitrary sequences of base points in $(M,g)$.
\end{prop}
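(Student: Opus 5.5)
The plan is to reduce to the radially symmetric model and then split into two cases, according to how far the base points sit from the pole at scale $r_n$. Asymptotic cones are bi-Lipschitz invariants under quasi-isometries: a $(q,Q)$-quasi-isometry induces a $q$-bi-Lipschitz map of cones, because the additive constant $Q$ disappears after dividing by $r_n \to_\omega \infty$. So by Theorem \ref{radsim} we may assume $(M,g)$ is radially symmetric, with pole $p$ and $g = dr^2 + f(r)^2 d\theta^2$. It then suffices to show that for every sequence $\mathrm{a} = (a_n)$, the cone $\mathrm{Con}_\omega(M,\mathrm{a},(r_n))$ is homeomorphic to $\mathrm{Con}_\omega(M,\mathrm{p},(r_n))$, where $p_n = p$ for all $n$. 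The case split is on whether $\lim_\omega d(p,a_n)/r_n$ is finite.

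\textbf{Case 1: $\lim_\omega d(p,a_n)/r_n = L < \infty$.} By the triangle inequality, a sequence $(y_n)$ has $d(a_n,y_n)/r_n$ bounded if and only if $d(p,y_n)/r_n$ is bounded. Hence $M^{\mathbb{N}}_b$ is the same set for both base-point sequences, $d_\omega$ is the same metric on it, and the identity on sequences induces an isometry between the two cones. This is the standard argument and needs nothing specific to planes.

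\textbf{Case 2: $\lim_\omega d(p,a_n)/r_n = \infty$.} Write $a_n = (R_n,\theta_n)$ in polar coordinates. Since rotations are isometries, we may take $\theta_n = 0$. The cone now only sees the annular region $\{ |r - R_n| \le C r_n \}$ near the ray $\theta = 0$, rescaled by $1/r_n$. The idea is to compare this region with the model at the pole by exploiting the control from bounded geometry: $|(\log f)'|$ and $|(\log f)''|$ are bounded, as established in the proof of Theorem \ref{radsim}. I would proceed in two steps.
\begin{itemize}
\item First, compute the cone at $\mathrm{a}$ explicitly. It should be either a line (when $f(R_n)/r_n \to_\omega 0$, so that circles of radius about $R_n$ collapse) or a half-plane/plane-type space (when $f(R_n)/r_n$ is bounded below). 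This is the same dichotomy that drives Theorem A.
\item Second, match each regime with the corresponding regime at the pole, using the ratio bound $\frac{1}{\delta} \le F(n+1)/F(n) \le \delta$ from Theorem \ref{radsim}.
\end{itemize}

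The main obstacle is Case 2. A priori, cones based at points escaping faster than $r_n$ can differ from the cone at the pole. The proof therefore needs a genuine homogeneity input from bounded geometry and planarity, not just the triangle inequality. The first thing I would try is to build a bi-Lipschitz (or merely continuous, bijective, proper) map on rescaled annuli, sending $(r,\theta) \mapsto (r - R_n + \rho_n, \lambda_n \theta)$ for a suitably chosen comparison radius $\rho_n$ with $\rho_n/r_n$ bounded. The comparison radius $\rho_n$ would be chosen so that $f(\rho_n)/r_n$ and $f(R_n)/r_n$ have the same $\omega$-behaviour, and the Lipschitz control on $\log f$ would be used to show that the distortion tends to $1$ after rescaling. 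If this matching cannot be arranged in some regime, that regime is exactly where the statement would need to be weakened.
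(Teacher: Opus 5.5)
\textbf{The gap is Case~2, and it cannot be filled.} Your Case~1 is correct: the cone does not change when $\lim_\omega d(a_n,b_n)/r_n<\infty$, provided you read ``bounded'' as ``$\omega$-bounded''. Case~2, however, is a programme rather than an argument, and no argument exists, because the proposition is false for base points escaping faster than the scale. Take $g=dr^2+f(r)^2d\theta^2$ with $f\equiv 1$ on $[2,\infty)$, i.e.\ a flat half-cylinder of circumference $2\pi$ capped by a disc. It has bounded geometry: curvature is supported in $B(p,2)$, $\mathrm{inj}=\pi$ once $d(p,x)\ge 2+\pi$, and $\mathrm{inj}$ is bounded below on the remaining compact set. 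For $s,s'\ge 2$ one has $|s-s'|\le d((s,\phi),(s',\phi'))\le |s-s'|+\pi$. With the constant base point, this estimate shows $\mathrm{Con}_\omega(M,\mathrm{p},(r_n))$ is isometric to $[0,\infty)$. Now take $a_n=(r_n^2,0)$. Any $(y_n)$ with $d(a_n,y_n)\le Cr_n$ represents the same point as $((r_n^2+tr_n,0))$, where $t=\lim_\omega (d(p,y_n)-r_n^2)/r_n\in[-C,C]$. Every $t\in\mathbb{R}$ occurs, since $r_n^2+tr_n\ge 2$ for $\omega$-all $n$. So $\mathrm{Con}_\omega(M,\mathrm{a},(r_n))$ is isometric to $\mathbb{R}$. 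Since $[0,\infty)\setminus\{0\}$ is connected while $\mathbb{R}$ minus any point is disconnected, the two cones are not homeomorphic.

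\textbf{Why your matching map fails, and why the paper's route does too.} The example is exactly your ``collapsed'' regime: at $\mathrm{a}$ the radial direction is two-sided, while at $\mathrm{p}$ it terminates. Accordingly, your map $(r,\theta)\mapsto(r-R_n+\rho_n,\lambda_n\theta)$ is not even defined on $\{|r-R_n|\le Cr_n\}$ once $Cr_n>\rho_n$. The other regime fails as well. Suppose $f(r)=r^2$ for large $r$ and $\lim_\omega R_n/r_n=\infty$. Then $f(R_n+xr_n)/f(R_n)\to 1$ uniformly for bounded $x$, and $f(R_n)\gg r_n$. So the $Cr_n$-neighbourhood of $a_n$ rescales to an almost Euclidean square, and the cone at $\mathrm{a}$ is $\mathbb{R}^2$. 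The cone at $\mathrm{p}$, by contrast, contains a hedgehog with continuum many spines (Theorem~\ref{spread}) and is not locally compact at $\mathrm{p}$. No choice of $\rho_n,\lambda_n$ repairs this. The paper argues differently: it builds a quasi-isometry of $M$ moving $p$ to $q$ and then cites Riley. That does not get past the same obstruction. Independence for arbitrary base-point sequences would need quasi-isometries $\phi_n$ with $\phi_n(a_n)=b_n$ and constants independent of $n$. This kind of quasi-homogeneity holds for Cayley graphs but fails here, as both examples show. What is true, and what you have proved, is Case~1; in particular the cone does not depend on the choice of constant base point.
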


\begin{proof}
Fix a non-principal ultrafilter $\omega$ and let $(d_n)$ be a sequence of positive reals such that $\lim_\omega d_n = \infty$.

Choose $p, q \in (M,g)$ where $(M,g)$ is a Riemannian plane with bounded geometry.
Theorem 1.2 of \cite{Bowditch} guarantees the existence of a bilipschitz triangulation $\mathcal{T}$ such that $p, q \in V(\mathcal{T})$.
Denote as $\pi : M \to \mathcal{T}$ the nearest point projection of $M$ to $\mathcal{T}$ and let $\iota : \mathcal{T} \to M$ be the quasi-isometric embedding.

From theorem \ref{radsim}, there exists quasi-isometric embeddings $\iota_p : \mathcal{T} \to M_p$ and $\iota_q: \mathcal{T} \to M_q$ of $\mathcal{T}$ into radially symmetric planes with bounded degree $M_p$ and $M_q$ such that  $\iota_p(p)$ is the pole of $M_p$ and $\iota_q(q)$ is the pole of $M_q$.
Denote as $\pi_p$ and $\pi_q$ the nearest point projections of $M_p$ and $M_q$ to $\mathcal{T}$.

Note that the map $f : M_p \to M_q$ where for $(r, \theta) \in M_p$ we have $f(r, \theta) = (r, \theta) \in M_q$ is a homeomorphism and a quasi-isometry.
We see that $\pi_q \circ f \circ \iota_p : \mathcal{T} \to \mathcal{T} $ is a quasi-isometry which takes $p \in V(\mathcal{T})$ to $q \in V(\mathcal{T})$
It follows that $ \iota \circ \pi_q \circ f \circ \iota_p \circ \pi: M \to M$ is a quasi-isometry which takes $p \in (M,g)$ to $q \in (M,g)$.

Proposition 2.5 of \cite{Riley} implies that for any sequence of base points $\mathrm{a}, \mathrm{b} \in M^\mathbb{N}$, we have that $\mathrm{Con}_\omega(M, \mathrm{a}, (d_n))$ and $\mathrm{Con}_\omega(M, \mathrm{b}, (d_n)) $ are both homeomorphic. 
\end{proof}

We can thus write $\mathrm{Con}_\omega (M, \mathrm{x}, (r_n))$ as $\mathrm{Con}_\omega (M, (r_n))$.
Unless otherwise stated, the sequence of base points shall be assumed to be $\mathrm{p} = (p_n) $ where $p_n = p$ for all $n \in \mathbb{N}$.

\begin{cor}
\label{coord}
Let $(M,g)$ be a radially symmetric plane and let $\omega$ be a non-principal ultrafilter and let $\lim_\omega r_n = \infty$.
Let $\mathrm{Con}_\omega(M, \mathrm{p}, (r_n))$ be an asymptotic cone of $(M,g)$.
If $\mathrm{a}  = ((a_n, \theta_n)) \in \mathrm{Con}_\omega(M, \mathrm{p}, (r_n))$ such that $d_\omega( \mathrm{a}, \mathrm{p)} = D$, then $d_\omega(\mathrm{a}, \mathrm{b}) = 0$ where $\mathrm{b} = ((Dr_n, \theta_n))$.
\end{cor}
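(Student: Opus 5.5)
Since $d_\omega$ is computed through ultralimits of the rescaled distances, it suffices to show
\begin{equation*}
\lim_\omega \frac{d\bigl((a_n,\theta_n),(Dr_n,\theta_n)\bigr)}{r_n} = 0 .
\end{equation*}
The plan has three steps: read off the radial coordinate of $\mathrm{a}$ from $D$, bound the distance between $(a_n,\theta_n)$ and $(Dr_n,\theta_n)$ by a radial segment, and check that $\mathrm{b}$ lies in $M^\mathbb{N}_b$.

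\textbf{Step 1.} In a radially symmetric plane with pole $p$, the coordinate $r$ equals the distance to the pole. Indeed, $\exp_p$ is a diffeomorphism and the metric is $dr^2+f(r)^2d\theta^2$. So any curve from $p$ to $(r,\theta)$ has length at least $\int |dr| \geq r$, and the radial ray $t\mapsto (t,\theta)$ realises this bound. Hence $d(p,(a_n,\theta_n)) = a_n$. Because $p_n=p$ for all $n$, the hypothesis $d_\omega(\mathrm{a},\mathrm{p})=D$ becomes $\lim_\omega a_n/r_n = D$.

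\textbf{Step 2.} The points $(a_n,\theta_n)$ and $(Dr_n,\theta_n)$ lie on the same radial geodesic $t\mapsto (t,\theta_n)$. Its length element is $|dr|$ along that ray, so
\begin{equation*}
d\bigl((a_n,\theta_n),(Dr_n,\theta_n)\bigr) \leq |a_n - Dr_n| .
\end{equation*}
In fact equality holds, by the same argument as in Step 1, but the upper bound is all we need. Dividing by $r_n$ and taking the $\omega$-limit, Step 1 gives
\begin{equation*}
\lim_\omega \Bigl| \frac{a_n}{r_n} - D \Bigr| = 0 ,
\end{equation*}
so $d_\omega(\mathrm{a},\mathrm{b})=0$.

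\textbf{Step 3.} We must also confirm that $\mathrm{b}\in M^\mathbb{N}_b$, so that it is a legitimate point of the cone. This is immediate, since $d(p,(Dr_n,\theta_n))/r_n = D$.

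The only place needing care is the identification of $r$ with distance to the pole, which is the content of the Remark on poles. A minor point is to interpret the equality $d_\omega(\mathrm{a},\mathrm{b})=0$ via the ultralimit rather than an actual limit. No earlier results beyond the definition of the asymptotic cone and Proposition~\ref{indy} (which justifies using the constant base point $\mathrm{p}$) are needed.
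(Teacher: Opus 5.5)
Your proof is correct and follows the same route as the paper, which computes $d_\omega(\mathrm{a},\mathrm{b}) = \lim_\omega |Dr_n - a_n|/r_n = 0$ from the fact that the two points lie on a common radial geodesic. You additionally make explicit the identification $r = d(p,\cdot)$ and the check that $\mathrm{b}\in M^\mathbb{N}_b$. One small point: the appeal to Proposition~\ref{indy} is unnecessary, since the statement already fixes the constant base point $\mathrm{p}$, and that proposition assumes bounded geometry, which this corollary does not.
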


\begin{proof}
Note that $d_\omega( \mathrm{a}, \mathrm{b}) = \lim_\omega \frac{d((a_n, \theta_n), (Dr_n, \theta_n))}{r_n} = \lim_\omega \frac{|Dr_n -a_n |}{r_n} = 0$.   
\end{proof}

This ensures that every $\mathrm{a} \in \mathrm{Con}_\omega (M, \mathrm{p},(d_n))$ has a representative of the form $( (d_\omega (\mathrm{a}, \mathrm{p})d_n, \theta_n)) \in M^\mathbb{N}$. 

The following result implies that $(M,g)$ is a radially symmetric plane with $g = dr^2 + f(r)^2 d\theta^2$, then we can find a non-principal ultrafilter $\omega$ and a sequence of positive numbers $(r_n)$ where $\lim_\omega r_n = \infty$ such that $\mathrm{Con}_\omega (M, (r_n))$ is homeomorphic to $[0, \infty)$.

\begin{lemma}
\label{line}
Let $\omega$ be a non-principal ultrafilter.
Suppose that $(r_n)$ is a sequence of positive reals such that if $\lim_\omega \frac{f(r_n)}{r_n} = 0$ for some $\lim_\omega r_n = \infty$, then $\mathrm{Con}_\omega (M, (r_n))$ is isometric to $[0, \infty)$.
\end{lemma}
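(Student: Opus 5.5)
The plan is to show that the map $\Phi:\mathrm{Con}_\omega(M,(r_n))\to[0,\infty)$, $\Phi(\mathrm{a})=d_\omega(\mathrm{a},\mathrm{p})$, is a surjective isometry. By Proposition \ref{indy} we may take the constant base point $\mathrm{p}=(p)$ at the pole. By Corollary \ref{coord}, every point of the cone has a representative $((Dr_n,\theta_n))$ with $D=\Phi(\mathrm{a})$.

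Surjectivity of $\Phi$ is immediate: the radial ray $t\mapsto(t,0)$ is a geodesic ray from $p$, so $((Dr_n,0))$ realises the value $D$. The lower bound $d_\omega(\mathrm{a},\mathrm{b})\ge|\Phi(\mathrm{a})-\Phi(\mathrm{b})|$ is the triangle inequality in the cone, since $d(p,\cdot)$ is $1$-Lipschitz and equals the $r$-coordinate. It therefore remains to prove the reverse inequality. For $\mathrm{a}=((Dr_n,\theta_n))$ and $\mathrm{b}=((Er_n,\phi_n))$ with, say, $D\le E$, I need
\[
d\bigl((Dr_n,\theta_n),(Er_n,\phi_n)\bigr)\le (E-D)\,r_n+o(r_n)\quad\text{along }\omega .
\]

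For this I would build an explicit competitor path. Move along the circle of radius $Dr_n$ from angle $\theta_n$ to angle $\phi_n$, which costs at most $\pi f(Dr_n)$, and then go radially outward from radius $Dr_n$ to radius $Er_n$, which costs $(E-D)r_n$. Dividing by $r_n$, the result follows provided $f(Dr_n)/r_n\to_\omega 0$ for every fixed $D>0$. (The case $D=0$ is trivial, since then $\mathrm{a}=\mathrm{p}$.)

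The main obstacle is that the hypothesis only controls $f$ at the radii $r_n$, not at $Dr_n$. For $D\ge1$ one can instead route through the circle of radius $r_n$, at a cost of $\pi f(r_n)=o(r_n)$, but this only gives the bound $(D-1)+(E-1)$ rather than $E-D$. This already shows that all points at cone-distance $\ge1$ from $\mathrm{p}$ lie within bounded distance of a single ray. To get the sharp bound I would try to compare $f(Dr_n)$ with $f(r_n)$. Here I would use the structure from Theorem \ref{radsim}: there $f=e^{h}$ with $h'$ bounded, and the counting function $F$ of a planar bounded-degree triangulation satisfies the combinatorial constraint that $\partial B(v,n)$ is a cycle. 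I would first try to extract from this that circles of radius $t\le r_n$ have length $O(f(r_n))+o(r_n)$. The idea is that a short separating circle at radius $r_n$ forces every smaller circle to be short up to $o(r_n)$, via a coarse monotonicity argument on the cyclic subgraphs $\partial B(v,n)$.

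If that comparison fails in general, the fallback is to replace the statement by choosing, for each $D$, radii $s_n\in[Dr_n,(D+\epsilon)r_n]$ along which $f(s_n)=o(r_n)$. This uses the freedom in $\omega$ together with the fact that $\lim_\omega f(r_n)/r_n=0$ along a sequence of scales. With such radii, the same path argument gives the required inequality up to $\epsilon$, and letting $\epsilon\to0$ yields the isometry with $[0,\infty)$.
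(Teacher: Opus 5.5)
You have found exactly the weak point, but it cannot be repaired. The hypothesis $\lim_\omega f(r_n)/r_n=0$ controls only the circle of radius $r_n$, and with nothing more the lemma is false. The paper's own proof makes precisely the substitution you refuse to make: for points at cone distance $D$ from $\mathrm{p}$ it writes $\lim_\omega|\theta_n-\varphi_n|f(r_n)/r_n$, whereas the circle through them has length $2\pi f(Dr_n)$. Neither of your remedies works. The ``coarse monotonicity'' is false. The constraint $F(n+1)/F(n)\in[\delta^{-1},\delta]$ from Theorem \ref{radsim} only bounds $|h'|$. That lets $\log f$ drop from $2\log r$ to $O(1)$ and climb back within a window of width $O(\log r)$. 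So a short circle at radius $r_n$ says nothing about the circle at radius $r_n/2$. The fallback has no source either: $\omega$ and $(r_n)$ are fixed, and nothing in the hypothesis produces radii in $[Dr_n,(D+\epsilon)r_n]$ where $f=o(r_n)$.

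\textbf{Counterexample.} Let $s_k=2^{2^k}$, so $s_{k-1}=\sqrt{s_k}$. For $t\ge 2$ let $f=e^{h}$, where $h$ is a smoothing, with $h'$ and $h''$ bounded, of $\min\{2\log t,\ \min_k(1+|t-s_k|)\}$; alter $f$ on $[0,2]$ so that $f(0)=0$ and $f'(0)=1$.
\begin{itemize}
\item Bounded geometry: $f''/f=h''+(h')^2$ is bounded and $f\ge 1$ on $[2,\infty)$, so Theorem \ref{wellbehaved} applies.
\item Shape of $f$: $f(s_k)=O(1)$, while $f(t)\ge t^2/2$ outside the windows $|t-s_k|\le 2\log s_k+1$.
\item Hypothesis: take $r_n=s_n$, so $f(r_n)/r_n\to 0$.
\end{itemize}
Now take a path from $(s_n/2,\theta)$ to $(s_n/2,\phi)$ with angular separation $\Delta>0$, whose radii fill $[m,m']$. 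Its length is at least $2(s_n/2-m)+2(m'-s_n/2)$. If it misses the pole, its length is also at least $\Delta\min_{[m,m']}f$. When $m\ge 2\sqrt{s_n}$ and $m'\le s_n-2\log s_n-1$, the interval $[m,m']$ misses every window, so $\min_{[m,m']}f\ge 2s_n$. Hence the distance is at least $\min(1,2\Delta)s_n-o(s_n)$.

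So the points $((s_n/2,\theta))$, $\theta\in\mathbb{S}^1$, all lie at cone distance $1/2$ from $\mathrm{p}$ and are pairwise at positive distance. The cone is therefore not isometric to $[0,\infty)$. It is not even homeomorphic to it: all the points $((s_n,\theta))$ coincide in the cone, and two radial segments from $\mathrm{p}$ to that point form an embedded circle.

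\textbf{What your argument does prove.} Your competitor-path argument proves the lemma, completely and more carefully than the paper, under the stronger hypothesis $\lim_\omega f(Dr_n)/r_n=0$ for every $D>0$. This holds, for instance, when $f(r)=o(r)$ as $r\to\infty$.
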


\begin{proof}
Let $(r_n) \in (0, \infty)^\mathbb{N}$ such that $\lim_\omega r_n  = \infty$.
Assume that $\lim_\omega \frac{f(r_n)}{r_n} = 0$.
It follows that if $\mathrm{a}, \mathrm{b} \in \mathrm{Con}_\omega (M, \mathrm{p}, (r_n))$ with $d_\omega ( \mathrm{a}, \mathrm{p}) = d_\omega ( \mathrm{b}, \mathrm{p}) = D$.

Corollary \ref{coord} allows us to show that $d_\omega (\mathrm{a}, \mathrm{b}) = \lim_\omega \frac{|\theta_n - \varphi_n|f(r_n)}{r_n} \leq  \lim_\omega \frac{f(r_n)}{r_n} = 0$.
Therefore $\mathrm{Con}_\omega (M, (\mathrm{r_n}))$ is isometric to $[0, \infty)$.
\end{proof}

\textbf{Remark}\label{rem}-If $\mathrm{q} = (q_n) = ((r_n, \theta_n))\in \mathrm{Con}_\omega(M,(r_n))$, we define $r_n= d(p,q_n)$ and $\theta_n$ as follows
\[
\theta_n = \begin{cases}
    (\exp_p^{-1}q_n)/d(q_n,p) &\text{if } d(q_n,p) \neq 0 \\
    0 &\text{otherwise}
\end{cases}
\]

\begin{theorem}
\label{rtwo}
Let $\omega$ be a non-principal ultrafilter and let $(M,g)$ be a radially symmetric plane. 
Let $(r_n)$ be a sequence of positive numbers such that $\lim_\omega r_n = \infty$.
If $\lim_\omega \frac{f(r_n)}{r_n} \in ( 0, \infty)$, then $\mathrm{Con}_\omega (M, (r_n))$ is homeomorphic to $\mathbb{R}^2$.
\end{theorem}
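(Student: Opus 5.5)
Let $c = \lim_\omega \frac{f(r_n)}{r_n} \in (0,\infty)$. The plan is to show that the cone is a \emph{cone over a circle}, that is, the quotient $[0,\infty)\times \mathbb{S}^1/(\{0\}\times\mathbb{S}^1)$, which is homeomorphic to $\mathbb{R}^2$.

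By Corollary \ref{coord}, every point $\mathrm{a}\in\mathrm{Con}_\omega(M,(r_n))$ with $d_\omega(\mathrm{a},\mathrm{p})=D$ has a representative $((Dr_n,\theta_n))$. Since $\mathbb{S}^1$ is compact, the ultralimit $\theta=\lim_\omega\theta_n\in\mathbb{S}^1$ exists. This suggests the map
\begin{equation*}
\Phi:\mathrm{Con}_\omega(M,(r_n))\to C(\mathbb{S}^1),\qquad \Phi(\mathrm{a})=[(D,\theta)],
\end{equation*}
with all points at $D=0$ sent to the cone point.

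\textbf{Step 1: well-definedness.} We show that $\Phi$ does not depend on the representative, and that distinct cone points with the same $D>0$ have distinct $\theta$. For this we need two-sided estimates on $d((Dr_n,\theta_n),(D'r_n,\theta'_n))/r_n$ in terms of $|D-D'|$ and $|\theta-\theta'|$.
\begin{itemize}
\item \emph{Upper bound.} Go radially from $Dr_n$ to $D'r_n$, then along the circle of radius $D'r_n$. This gives
\begin{equation*}
d \le |D-D'|r_n + f(D'r_n)\,|\theta_n-\theta'_n|.
\end{equation*}
\item \emph{Lower bound.} We need $d \ge c'\,\min(D,D')\,r_n\,|\theta_n-\theta'_n|$ for some $c'>0$. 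A path of length $L$ between the two points stays in the annulus $r\ge Dr_n-L$. Its angular displacement is then bounded by $L/\inf f$ over that annulus, so we need a lower bound for $f$ on the range $[\epsilon r_n, K r_n]$.
\end{itemize}

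\textbf{Main obstacle: controlling $f$ on a whole scale range.} The hypothesis only fixes $f$ at the single scale $r_n$, whereas the estimates above need $f(sr_n)/r_n$ for $s$ in a compact range. I would invoke bounded geometry here, as in Theorem \ref{radsim}: there we may take $f=\exp(h)$ with $|h'|\le A$. Then $f(sr_n)/f(r_n)=\exp(h(sr_n)-h(r_n))$, but this only gives bounds of the form $e^{\pm A|s-1|r_n}$, which is useless at scale $r_n$.

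So I would instead pass to a subsequence in $\omega$, using a diagonal argument on the rationals $s$ together with the $1$-Lipschitz property of $f$ (from the bound on $|f'|$ at large scales, which I would need to justify or assume). This defines
\begin{equation*}
F(s)=\lim_\omega \frac{f(sr_n)}{r_n}\in[0,s\,c''],
\end{equation*}
a Lipschitz function with $F(1)=c>0$. The distance in the cone then becomes the length metric $dr^2+F(r)^2\,d\theta^2$ on the cone.

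If $F$ vanishes at some $s_0$, the cone pinches there and fails to be $\mathbb{R}^2$. I would try to exclude this using the Lipschitz property together with the standing assumption that the hypothesis ensures $F>0$ on $(0,\infty)$. I expect this to be the genuinely delicate point; it may require assuming $\lim_\omega f(sr_n)/r_n>0$ for all $s>0$.

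\textbf{Step 2: conclusion.} Granted $F>0$ on $(0,\infty)$ and $F$ Lipschitz, the two-sided estimates show that $\Phi$ is a bijection that is continuous in both directions, locally on compact annuli $\{\epsilon\le D\le K\}$. Continuity at the cone point follows from $d_\omega(\mathrm{a},\mathrm{p})=D$. Since $C(\mathbb{S}^1)\cong\mathbb{R}^2$ via $(D,\theta)\mapsto De^{i\theta}$, the cone $\mathrm{Con}_\omega(M,(r_n))$ is homeomorphic to $\mathbb{R}^2$.
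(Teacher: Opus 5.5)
You have put your finger on the right spot, but the gap cannot be closed: neither ingredient you propose is available, and the statement is false as stated. Bounded geometry controls $f'/f$ (your $|h'|\le A$), not $f'$. Already $f=\sinh r$ for $\mathbb{H}^2$ has $f'$ unbounded, so there is no Lipschitz bound. Even an $L$-Lipschitz $f$ would only give $F(s)\ge c-L|s-1|$, i.e.\ positivity near $s=1$, so assuming $F>0$ on $(0,\infty)$ amounts to assuming the conclusion. (No diagonal argument is needed to define $F(s)=\lim_\omega f(sr_n)/r_n\in[0,\infty]$; the only issue is whether it is finite and positive.)

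Here is a counterexample. Take $r_n=2^{n^2}$ and, on $[1,\infty)$, set $f=e^{h}$, where $h\ge 0$ is a smoothing of the disjoint tents of slope $\pm1$ and height $\log r_n$ centred at the $r_n$, with $h=0$ elsewhere (and any admissible cap near the pole).
\begin{itemize}
\item $|f''/f|=|h''+(h')^2|$ is bounded and $f\ge 1$, so $(M,g)$ has bounded geometry by Theorem~\ref{wellbehaved}.
\item $f(r_n)/r_n$ is pinched between two positive constants, so $\lim_\omega f(r_n)/r_n\in(0,\infty)$ for every $\omega$.
\item Every radius $R$ lies within $\log(2R)+O(1)$ of a radius where $f=1$. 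So any two points on the circle of radius $Dr_n$ are joined by a path of length $2\log(2Dr_n)+O(1)=o(r_n)$: radially in, around, radially out.
\end{itemize}
By Corollary~\ref{coord}, $\mathrm{a}\mapsto d_\omega(\mathrm{a},\mathrm{p})$ is then an isometry of $\mathrm{Con}_\omega(M,(r_n))$ onto $[0,\infty)$, not $\mathbb{R}^2$. In this example $F(s)=0$ for every $s\neq 1$ while $F(1)>0$: your pinching scenario occurs at every scale but one.

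The paper's proof does not supply the missing idea either. For two points at radius $Dr_n$ it writes $d_\omega(\mathrm{a},\mathrm{b})=\lim_\omega|\theta_n-\theta|f(r_n)/r_n$. This replaces $f(Dr_n)$ by $f(r_n)$ and treats the circular arc as minimizing, although it only gives your upper bound. Its claim that $d_\omega>0$ whenever $\lim_\omega\theta_n\neq\theta$ is exactly the multiscale lower bound you could not prove, and the example above shows it fails.

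What makes your argument work is a hypothesis at all scales. For example, assume $ar\le f(r)\le br$ for all large $r$, or the same bounds $\omega$-almost surely on $[\epsilon r_n,Kr_n]$ for all $0<\epsilon<K$. Then a path of length $L<\tfrac12\min(D,D')r_n$ between $(Dr_n,\theta_n)$ and $(D'r_n,\theta_n')$ stays where $f\ge\tfrac a2\min(D,D')r_n$. This gives
\[
d\ \ge\ \min\Bigl\{\tfrac12,\ \tfrac a2\,|\theta_n-\theta_n'|\Bigr\}\,\min(D,D')\,r_n .
\]
Together with your upper bound $d\le|D-D'|r_n+bD'r_n|\theta_n-\theta_n'|$, this makes your $\Phi$ a well-defined homeomorphism onto $C(\mathbb{S}^1)$. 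You then need neither a limit profile $F$ nor any Lipschitz property.
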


\begin{proof}
Let $(r_n)$ be a sequence of positive numbers such that $\lim_\omega r_n = \infty$.
Suppose that $\lim_\omega \frac{f(r_n)}{r_n} = \lambda \in (0, \infty)$.
Let $\mathrm{a}, \mathrm{b} \in \mathrm{Con}_\omega (M)$ such that $d_\omega (\mathrm{a}, \mathrm{p}) = d_\omega ( \mathrm{b}, \mathrm{p}) = D$ for some $D > 0$.
From \ref{coord}, we can assume that $\mathrm{a} = (a_n) = (Dr_n, \theta_n)$ and let $\mathrm{b} = (b_n) = (Dr_n, \theta)$ where $\lim_\omega \theta_n = \theta$ .
 We see that
\begin{equation*}
     d_\omega (\mathrm{a}, \mathrm{b}) = \lim_\omega \frac{|\theta_n - \theta| f(r_n)}{r_n} = 0
\end{equation*}
If $\lim_\omega \theta_n \neq \theta$, it follows that $d_\omega ( \mathrm{a}, \mathrm{b}) > 0$.
It follows that every $\mathrm{a} \in \mathrm{Con}_\omega (M, (r_n))$ can be written as $\mathrm{a} = (( Dr_n, \theta))$  where $D= d_\omega (\mathrm{a}, \mathrm{p})$ and $\theta \in UT_p M \simeq \mathbb{S}^1 $.

Define $\iota : (M,g) \to \mathrm{Con}_\omega(M, (r_n)) $ as follows
\begin{equation*}
    \iota( (R, \theta)) = (( R r_n, \theta)) \in \mathrm{Con}_\omega(M, (r_n)) 
\end{equation*}
We see that $\iota$ induces a homeomorphism from $\mathbb{R}^2$ to $\mathrm{Con}_\omega (M, (r_n))$.
\end{proof}

By arguing along the lines of Kapovich and Kleiner in theorem 8.1 of \cite{lacunary}, we can indeed show that "lacunary hyperbolic" planes whose asymptotic cone is a non-proper $\mathbb{R}-$tree for some non-principal ultrafilter $\omega$ are indeed Gromov-hyperbolic.

\begin{theorem}
The following statements are equivalent.
\begin{itemize}
    \item $(M,g)$ is Gromov hyperbolic and quasi-isometric to $\mathbb{H}^2$.
    \item Every asymptotic cone of $(M,g)$ is a non-proper $\mathbb{R}-$tree
    \item There is a non-principal ultrafilter $\omega$ and a sequence of positive numbers $(r_n)$ with $\lim_\omega r_n = \infty$ such that $\mathrm{Con}_\omega (M, (r_n))$ is an asymptotic cone which is not proper.
\end{itemize}
\end{theorem}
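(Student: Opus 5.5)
We prove the cycle of implications (1) $\Rightarrow$ (2) $\Rightarrow$ (3) $\Rightarrow$ (1), throughout taking $(M,g)$ to be a Riemannian plane with bounded geometry. By Theorem \ref{radsim}, Proposition \ref{indy}, and quasi-isometry invariance of asymptotic cones, we may replace $(M,g)$ by a radially symmetric plane with $g = dr^2 + f(r)^2d\theta^2$ and base all cones at the pole $p$.

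\textbf{(1) $\Rightarrow$ (2).} If $(M,g)$ is quasi-isometric to $\mathbb{H}^2$, then every asymptotic cone of $M$ is bi-Lipschitz homeomorphic to an asymptotic cone of $\mathbb{H}^2$. By the standard fact that cones of $\delta$-hyperbolic spaces are $\mathbb{R}$-trees, each of these is an $\mathbb{R}$-tree. Non-properness follows because the cone of $\mathbb{H}^2$ branches at every point with continuum many directions. Concretely, for any $D>0$, take geodesic rays from $p$ with angles $\theta$ in an uncountable set. In $\mathbb{H}^2$ the distance between the points at radius $Dr_n$ on two such rays is about $2Dr_n - O(1)$. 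So the points $((Dr_n,\theta))$ are pairwise at $d_\omega$-distance $2D$, and the closed ball of radius $D$ is not compact.

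\textbf{(2) $\Rightarrow$ (3).} This is trivial.

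\textbf{(3) $\Rightarrow$ (1).} Suppose $\mathrm{Con}_\omega(M,(r_n))$ is not proper. By Lemma \ref{line}, if $\lim_\omega f(r_n)/r_n = 0$ the cone is $[0,\infty)$, which is proper. By Theorem \ref{rtwo}, if the limit lies in $(0,\infty)$ the cone is $\mathbb{R}^2$, which is again proper. Hence $\lim_\omega f(r_n)/r_n=\infty$.

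\begin{itemize}
\item \textbf{Step 1.} Using the bounded-geometry bounds on $h'=(\log f)'$ from the construction in Theorem \ref{radsim}, show that $f(r_n)/r_n\to\infty$ along $\omega$ forces exponential-type growth on the scale of $r_n$.
\item \textbf{Step 2.} Show that the ``angular'' geodesics between points $(Dr_n,\theta)$ and $(Dr_n,\varphi)$ with $\theta\neq\varphi$ must dip toward the pole. This gives $d_\omega = 2D$, and so the cone is the $\mathbb{R}$-tree obtained by gluing rays at $\mathrm{p}$, indexed by the ultralimit of angles.
\item \textbf{Step 3.} Having one cone that is an $\mathbb{R}$-tree, follow Kapovich--Kleiner (theorem 8.1 of \cite{lacunary}), in the form ``lacunary hyperbolic''. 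One asymptotic cone being an $\mathbb{R}$-tree gives hyperbolicity at the scales $r_n$. Because $M$ is a plane with bounded geometry, it is quasi-isometric to a bounded-degree planar triangulation, for which the relevant Dehn/filling functions (Theorem C, via \cite{Riley}) are controlled. A linear isoperimetric inequality at one sequence of scales then propagates to all scales, giving Gromov hyperbolicity.
\item \textbf{Step 4.} Lemma \ref{hyp} then says $M$ is quasi-isometric either to $[0,\infty)$ or to $\mathbb{H}^2$. The first is excluded because its cones are $[0,\infty)$, which is proper.
\end{itemize}

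\textbf{Main obstacle.} The hardest step is Step 3: passing from a single $\mathbb{R}$-tree cone to global hyperbolicity. The first approach I would try is a quadratic-to-linear isoperimetric gap argument at the scales $r_n$. I expect Step 2, identifying the non-proper cone as an $\mathbb{R}$-tree for radially symmetric metrics, to need careful geodesic estimates as well.
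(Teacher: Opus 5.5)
\textbf{Genuine gap: Steps 1 and 2.} Your implications (1)$\Rightarrow$(2)$\Rightarrow$(3) are fine. Your Steps 3 and 4 follow the paper's route: the Kapovich--Kleiner local-to-global argument through Theorem 8.1.2 of \cite{Bowditch} on the bounded-degree triangulation, then Lemma \ref{hyp} to exclude $[0,\infty)$. The gap is in Steps 1 and 2, where you try to upgrade ``some cone is not proper'' to ``that cone is an $\mathbb{R}$-tree''. This cannot be done, because (3) as literally stated does not imply (1).

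Bounded geometry only bounds $|(\log f)'|$ from above. So nothing forces exponential growth out of $f(r_n)/r_n\to\infty$, and Step 1 is false. The paper's own example $f(r)=r\log r$ ($r\ge 2$) shows this:
\begin{itemize}
\item It has bounded geometry and, by Theorem \ref{spread}, no proper asymptotic cone.
\item It is recurrent, since $\int^\infty dr/(r\log r)=\infty$. Hence, by theorem 1 of \cite{Kanai} (which the paper invokes), it is not quasi-isometric to the transient plane $\mathbb{H}^2$.
\item Concretely, rescaling by $r_n$ and putting $\phi=\theta\log r_n$ turns its metric into approximately $d\rho^2+\rho^2d\phi^2$ on a Euclidean cone of total angle $2\pi\log r_n\to\infty$. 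So the asymptotic cone contains flat sectors. For instance, the points $((r_n,\tfrac{k\pi}{3\log r_n}))$, $k=0,1,2$, lie at distance $1$ from $\mathrm{p}$ with mutual distances $1,1,\sqrt3$, which violates the $0$-hyperbolic four-point condition.
\end{itemize}

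Your description of the cone in Step 2 also fails in the genuinely hyperbolic case. In $\mathbb{H}^2$, the points $((Dr_n,0))$ and $((Dr_n,e^{-\lambda r_n}))$ with $0<\lambda<D$ have $d_\omega=2(D-\lambda)>0$, even though their angles have the same ultralimit. So the cone branches at every point; it is not a hedgehog of rays glued at $\mathrm{p}$ and indexed by limit angles.

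\textbf{What the paper does, and the fix.} The paper's proof never attempts this upgrade. It simply starts by assuming that the cone is a non-proper $\mathbb{R}$-tree, which is the form stated as Theorem B in the introduction. So (3) must be read with ``$\mathbb{R}$-tree'' included. With that reading, drop Steps 1 and 2 and start Step 3 directly from the tree hypothesis.

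In Step 3, the input you need is not Theorem C (Riley's higher filling functions). It is coarse simple connectivity of the bounded-degree triangulation: every loop is filled by triangles of uniformly bounded size, the planar substitute for finite presentation. That is what lets $o(r_n)$-hyperbolicity of all balls of radius $r_n$, for $\omega$-almost every $n$, propagate to global hyperbolicity.
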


\begin{proof}
From theorem 1.2 of \cite{Bowditch2}, we can work with the bilipschitz planar triangulation $\mathcal{T}$ of $(M,g)$.
Theorem 8.1.2 of \cite{Bowditch} states that for all $k$, there exists a $k'$ and there exists a $R$ such that for every $v \in V(\mathcal{T)}$, $B(v,R)$ being $k-$hyperbolic implies that $\mathcal{T}$ is $k'-$hyperbolic.

Assume that $\mathrm{Con}_\omega (\mathcal{T}, (s_n)) $ is a $\mathbb{R}-$tree which is not proper.
$\mathrm{Con}_\omega (M, (r_n))$ being a non-proper $\mathbb{R}-$tree implies the existence of $(\delta_j)$ such that 
\begin{equation*}
    \lim_\omega \frac{\delta_j}{r_j} = 0
\end{equation*}
and every ball $B(v, r_j)$ with $v \in V (\mathcal{T})$ is $\delta_j-$hyperbolic ( see lemma \ref{indy} which shows that $\mathrm{Con}_\omega(\mathcal{T}, (r_n))$ is independent of base points). 

$\lim_\omega r_j = \infty$ implies that for every $n \in \mathbb{N}$ and for every $\lambda > 0$, there is a $j > n$ such that $r_j > R$ and $\frac{\delta_j}{r_j}< \lambda$.
This implies that each $B(v, r_j)$ is $\lambda r_j-$hyperbolic which ensures that each $B(v, R)$ is $\lambda r_j$-hyperbolic.
Theorem 8.1.2 of \cite{Bowditch} implies that there exists a $\kappa$ such that $\mathcal{T}$ is $\kappa-$hyperbolic.

Lemma \ref{hyp} forces $\mathcal{T}$ ( and consequently $(M,g)$) to be quasi-isometric to either $[0, \infty)$ or $\mathbb{H}^2$.
But every asymptotic cone of $[0, \infty)$ is homeomorphic to $[0, \infty)$ which is proper.
Therefore, $(M,g)$ is quasi-isometric to $\mathbb{H}^2$.
\end{proof}

However, there are many Riemannian planes with bounded geometry which are not hyperbolic and yet admit non-proper asymptotic cones $\mathrm{Con}_\omega (M, (r_n))$ for some non-principal ultrafilter $\omega$ and some sequence $(r_n) \in (0, \infty)^\mathbb{N}$ with $\lim_\omega r_n = \infty$.

\begin{theorem}
\label{spread}
Suppose that $(M,g)$ is a radially symmetric plane with its pole at $p \in M$ where $g = dr^2 + f(r)^2 d\theta^2$ where $\lim_{r \to \infty} \frac{f(r)}{r} = \infty$.
Then $\mathrm{Con}_\omega(M, (r_n))$ is not proper and has a cut-point for every non-principal ultrafilter $\omega$ and every sequence of scaling factors $(r_n)$ with $\lim_\omega r_n = \infty$.
\end{theorem}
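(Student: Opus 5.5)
We use the polar coordinates of Corollary \ref{coord} and the Remark after Lemma \ref{line}: every point of $\mathrm{Con}_\omega(M,(r_n))$ other than $\mathrm{p}$ has a representative $((Dr_n,\theta_n))$ with $D=d_\omega(\mathrm{a},\mathrm{p})>0$. The argument rests on one distance estimate in $(M,g)$, for points $x=(s,\alpha)$ and $y=(t,\beta)$:
\begin{equation*}
d(x,y)\;\geq\; \min\Bigl\{\, s+t-2\rho,\ \ \inf_{\rho\le u\le \min(s,t)} \bigl(|s-u|+|t-u|+f(u)\,|\alpha-\beta|\bigr)\Bigr\},
\end{equation*}
where $|\alpha-\beta|$ is the angular distance and $\rho$ is any fixed radius. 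To derive it, take a path from $x$ to $y$ and let $u$ be the minimum of $r$ along it. If $u<\rho$, the path has length at least $s+t-2\rho$. Otherwise, the path stays in $\{r\geq u\}$. It needs radial length at least $(s-u)+(t-u)$, and angular length $\int f(r)|d\theta|$. Since $f(r)/r\to\infty$, $f$ is eventually increasing in the relevant sense, so this angular part is at least $f(u)|\alpha-\beta|$. I expect this step to be the main obstacle: one has to justify a monotonicity-type lower bound $\inf_{r\geq u} f(r)\gtrsim f(u)$. I would first try replacing $f(u)$ by $\underline f(u)=\inf_{r\ge u}f(r)$, which still satisfies $\underline f(u)/u\to\infty$ because $f(r)/r\to\infty$ and $r\ge u$.

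Next we show the cone is the Euclidean cone over a discrete (metric-tree) link. Take $\mathrm{a}=((Dr_n,\theta_n))$ and $\mathrm{b}=((Er_n,\varphi_n))$ with $\lim_\omega|\theta_n-\varphi_n|=\eta>0$. In the estimate, either $u\le \epsilon r_n$, which gives length $\gtrsim (D+E-2\epsilon)r_n$, or $u>\epsilon r_n$. In the second case the angular term is at least $\underline f(\epsilon r_n)\eta$, and $\underline f(\epsilon r_n)/r_n\to\infty$, so this term dominates. Hence $d_\omega(\mathrm{a},\mathrm{b})=D+E$. Whenever $\lim_\omega|\theta_n-\varphi_n|=0$, write $|\theta_n-\varphi_n|=\eta_n$. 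Using the geodesic that descends radially to the height $u_n$ where $f(u_n)\eta_n\approx$ comparable to $r_n$, and using the upper bound given by that explicit path, the limit distance is $D+E-2c$ for a common "branching height" $c=\lim_\omega u_n/r_n\in[0,\min(D,E)]$. The conclusion is that $\mathrm{Con}_\omega$ is an $\mathbb{R}$-tree rooted at $\mathrm{p}$: every pair of points is joined through the radial segments meeting at height $c$. More precisely, we verify the four-point condition with $\delta=0$ using the function $c(\mathrm{a},\mathrm{b})$, which is an ultrametric-type quantity in the angles.

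Non-properness follows. Fix a sequence of angles $\theta^{(k)}=2\pi k/K$ for $K$ large. The points $\mathrm{a}_k=((r_n,\theta^{(k)}))$ have pairwise angular separation bounded below, so $d_\omega(\mathrm{a}_k,\mathrm{a}_l)=2$ for $k\neq l$ by the first case above. Letting $K$ vary, or taking infinitely many distinct fixed angles, gives infinitely many points in the closed ball $\overline{B}(\mathrm{p},1)$ that are pairwise $2$-separated. Hence that closed ball is not compact, and the cone is not proper. For the cut-point, removing $\mathrm{p}$ disconnects the cone: $\mathrm{a}_1$ and $\mathrm{a}_2$ lie in different components of $\mathrm{Con}_\omega\setminus\{\mathrm{p}\}$. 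Any path between them must contain, for each small $\epsilon$, points at distance $<\epsilon$ from $\mathrm{p}$, because $c(\mathrm{a}_1,\mathrm{a}_2)=0$ and the tree geodesic is the unique arc. So $\mathrm{p}$ is a cut-point. Proposition \ref{indy} shows the statement does not depend on the base point chosen.
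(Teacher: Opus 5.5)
\textbf{The gap: the cone need not be an $\mathbb{R}$-tree.} Your cut-point argument rests on the claim that $\mathrm{Con}_\omega(M,(r_n))$ is an $\mathbb{R}$-tree, and that claim is false under the sole hypothesis $f(r)/r\to\infty$. Take $f(r)=r^2$ for $r\ge 1$. The map $(r,\phi)\mapsto(r_nr,\phi/r_n)$ pulls $g$ back to $r_n^2(dr^2+r^4d\phi^2)$. Hence a small neighbourhood of $(1,0)$ in the surface $dr^2+r^4d\phi^2$ embeds isometrically in the cone, and an $\mathbb{R}$-tree contains no embedded circle.

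Concretely, let $\mathrm{x}_k=((r_n,k\Phi/r_n))$ for $k=0,1,2$, with $\Phi$ small. Then $d_\omega(\mathrm{x}_0,\mathrm{x}_1)=d_\omega(\mathrm{x}_1,\mathrm{x}_2)\approx\Phi$, while $d_\omega(\mathrm{x}_0,\mathrm{x}_2)\approx 2\Phi$. So $(\mathrm{x}_0|\mathrm{x}_2)_{\mathrm{p}}\approx 1-\Phi<1-\Phi/2\approx\min\{(\mathrm{x}_0|\mathrm{x}_1)_{\mathrm{p}},(\mathrm{x}_1|\mathrm{x}_2)_{\mathrm{p}}\}$, which violates the $0$-hyperbolic four-point condition. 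Here your branching height scales like $1/(\eta_n r_n)$, and that quantity is ultrametric only up to a factor $2$. Exact ultrametricity in the limit holds for fast growth such as $f=e^r$, not for polynomial growth.

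Without the tree, $c(\mathrm{a}_1,\mathrm{a}_2)=0$ only says that $\mathrm{p}$ lies on some geodesic from $\mathrm{a}_1$ to $\mathrm{a}_2$. It does not say that every path meets $\mathrm{p}$, so the cut-point conclusion is unproved as written. Your treatment of the case $\lim_\omega|\theta_n-\varphi_n|=0$ (a limit of the form $D+E-2c$) is likewise unjustified, though it is not needed.

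\textbf{What is correct, and the repair.} Your case $\lim_\omega|\theta_n-\varphi_n|=\eta>0$ is correct. A path either dips below radius $\epsilon r_n$, so its radial length is at least $(D+E-2\epsilon)r_n$, or it stays above that radius, so its angular length is at least $\underline f(\epsilon r_n)\,\eta/2\gg r_n$. Hence $d_\omega(\mathrm{a},\mathrm{b})=D+E$, and your $\underline f$ device neatly avoids the paper's appeal to Lemma \ref{focal}. The displayed inequality that adds the two contributions is false as stated, because the length element is $\sqrt{dr^2+f^2d\theta^2}$, not $|dr|+f\,|d\theta|$. For example, in the Euclidean plane $(1,0)$ and $(1,\pi/2)$ are at distance $\sqrt2<\pi/2$. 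You only use the two bounds separately, however, and each of those is valid.

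This case gives non-properness exactly as you say, and it is a rigorous form of the paper's hedgehog argument. It also suffices for the cut-point. For $\mathrm{a}\neq\mathrm{p}$, set $\Theta(\mathrm{a})=\lim_\omega\theta_n\in\mathbb{S}^1$.
\begin{itemize}
\item $\Theta$ is well defined: two representatives of $\mathrm{a}$ with different limit angles would be at distance $2D>0$.
\item $\Theta$ is locally constant: if $d_\omega(\mathrm{a},\mathrm{b})<d_\omega(\mathrm{a},\mathrm{p})$, then $d_\omega(\mathrm{a},\mathrm{b})<D+E$, which forces $\Theta(\mathrm{a})=\Theta(\mathrm{b})$.
\end{itemize}
So $\Theta$ is locally constant on $\mathrm{Con}_\omega(M,(r_n))\setminus\{\mathrm{p}\}$, and $\mathrm{a}_1,\mathrm{a}_2$ lie in different components. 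This argument replaces your tree claim. It also gives a more elementary alternative to the paper's route, which uses superlinear divergence of paths avoiding $B(p,r_n)$ together with Lemma 3.14 of Drutu--Mozes--Sapir.
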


\begin{proof}
Lemma \ref{focal} allows us to assume without losing generality that $f$ is strictly monotone.
Choose an arbitrary non-principal ultrafilter $\omega$ and choose a sequence of positive numbers $(r_n)$ for which $\lim_\omega r_n = \infty$.
$\lim_{r \to \infty} \frac{f(r)}{r} = \infty$ implies that $\lim_\omega \frac{f(r_n)}{r_n} = \infty$.
From lemma \ref{indy}, we shall take $\mathrm{p} = (p_n)$ where every $p_n = p$ as the sequence of base points with respect to which we define $\mathrm{Con}_\omega (M, (r_n))$.

Let $\mathrm{a} =  (a_n) \in \mathrm{Con}_\omega(M, (r_n))$ be such that $ d_\omega(\mathrm{a},\mathrm{p}) = D > 0$.
As $d_\omega ( \mathrm{a}, \mathrm{p}) > 0$, $\omega-$all $a_n$ can be written as $(Dr_n, \theta_n)$ where $\theta_n \in [0, 2 \pi]/(0 \sim 2\pi) = \mathbb{S}^1$. 

Recall that $\mathbb{S}^1$ acts on $M \setminus p$ by $\theta \circ (r, \phi) = (r, \theta + \phi \mod{2 \pi})$ and we can continuously extend this to $M$ by having $\theta \circ p = p$ for all $\theta \in \mathbb{S}^1$.

It follows that $\mathbb{S}^1$ acts on $\mathrm{Con}_\omega(M, (r_n))$ by $\theta \circ \mathrm{a} = \{ \theta \circ a_n \}_{n \in \mathbb{N}}$.
As $\lim_{\omega} d(p, a_n) = \infty$, it follows that $\lim_\omega \frac{\gamma_n}{r_n} = \infty$ where $a_n = (Dr_n, \theta_n) $ and $\gamma_n$ is the shortest path in $M \setminus B(p,r_n)$ which joins $a_n$ and $\theta \circ a_n$.
Therefore from lemma 3.14 of \cite{DrutuMozesSapir}, we see that $\mathrm{p}$ is a cut-point of $\mathrm{Con}_\omega(M, (r_n))$.

Let $\gamma$ be the geodesic segment $\gamma : [0, d_\omega ( \mathrm{a}, \mathrm{p})] \to M$ where $\gamma(t) = (\exp_p(t \exp_p^{-1}a_n)) \in \mathrm{Con}_\omega (M,(r_n))$ and let 
\begin{equation*}
    \mathcal{H} = \bigcup_{\theta \in UT_pM} \theta \circ \gamma
\end{equation*}
We note that $\mathcal{H}$ is a metric embedding of the $\mathfrak{c}$-hedgehog space in $\mathrm{Con}_\omega(M, (r_n))$ which ensures that it is not proper as $\mathcal{H} \subset \mathrm{Con}_\omega (M, (r_n))$ is a closed and bounded subset which is not compact.
\end{proof}

\subsection{Filling Functions}
Riley defined filling functions $\mathrm{Fill}^{N}_{\mathrm{R}, \mu} : [0, \infty) \to \mathbb{N} \cup \{0\}$ which roughly measures how many $n$-cells will an extension of $(C, \gamma)$ to a combinatorial complex homeomorphic to $\mathbb{D}^n$ require where $\mathrm{mesh}(C, \gamma) \leq l$ in a controlled manner.

One can reasonably expect from Riemannian planes with bounded geometry being two-dimensional that $\mathrm{Fill}^N_{\mathrm{R}, \mu} : [0, \infty) \to \mathbb{N} \cup \{0\}$ is bounded for all $n \geq 2$ .
We shall see that is indeed the case by showing that every asymptotic cone of a Riemannian plane with bounded geometry is contractible .

\begin{theorem}
Let $(M,g)$ be a Riemannian plane with bounded geometry.
There exists $\mathrm{R} , \mu$ and a sequence of natural numbers $(K_n)$ such that the filling functions $\mathrm{Fill}^n_{\mathrm{R}, \mu}(l) \leq K_n$ for all $l \geq 0$.
\end{theorem}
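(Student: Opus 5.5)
The plan is to use Riley's criterion (propositions 4.7 and 4.9 of \cite{Riley}), as announced in the introduction. For suitable $\mathrm{R},\mu$, all filling functions $\mathrm{Fill}^n_{\mathrm{R},\mu}$ are bounded (by constants $K_n$) provided every asymptotic cone $\mathrm{Con}_\omega(M,\mathrm{x},(d_n))$ is contractible, or more precisely has vanishing homotopy groups $\pi_k$ for all $k$. The work therefore reduces to showing that every asymptotic cone of a Riemannian plane with bounded geometry is contractible.

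\textbf{Reductions.} Asymptotic cones are invariant under quasi-isometries up to bilipschitz homeomorphism. By theorem \ref{radsim}, I may therefore assume $(M,g)$ is radially symmetric with pole $p$ and $g=dr^2+f(r)^2d\theta^2$. By proposition \ref{indy}, I may take the base points to be the constant sequence $\mathrm{p}$.

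\textbf{Contraction.} I will contract the cone radially onto $\mathrm{p}$. By corollary \ref{coord}, every point of $\mathrm{Con}_\omega(M,(r_n))$ has a representative $\mathrm{a}=((Dr_n,\theta_n))$ with $D=d_\omega(\mathrm{a},\mathrm{p})$. Define
\begin{equation*}
H(\mathrm{a},s)=\bigl(((1-s)Dr_n,\theta_n)\bigr), \qquad s\in[0,1].
\end{equation*}
Two things must be checked.
\begin{itemize}
\item \emph{Well-definedness.} The point $H(\mathrm{a},s)$ must not depend on the representative. If $d_\omega(\mathrm{a},\mathrm{a}')=0$, I need $d_\omega(H(\mathrm{a},s),H(\mathrm{a}',s))=0$.
\item \emph{Continuity.} I need $d_\omega(H(\mathrm{a},s),H(\mathrm{b},t))\le C\,d_\omega(\mathrm{a},\mathrm{b})+|s-t|\max(D_a,D_b)$ for some constant $C$.
\end{itemize}
The $|s-t|$ term is immediate, because radial segments are unit-speed geodesics. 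The key estimate is that pulling two points at the same radius inward along meridians does not increase distance by more than a bounded factor: $d((tR,\theta),(tR,\varphi))\le C\,d((R,\theta),(R,\varphi))+o(R)$. This would follow directly if $f$ were monotone (then the angular projection $r\mapsto tr$ is $1$-Lipschitz on circles). Points at different radii can be handled by the triangle inequality, since moving radially is exact.

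\textbf{Main obstacle.} The difficulty is that $f$ need not be monotone, since it comes from the sphere growth $F(n)$ of the triangulation. A minimizing path between $(R,\theta)$ and $(R,\varphi)$ may dip to radii where $f$ is small, and rescaling such a path by $t$ may land in regions where $f$ is large. My first attempt would be the following. Replace the path by its radial "shadow": for a path $\sigma(u)=(\rho(u),\alpha(u))$, use $\sigma_t(u)=(\min(\rho(u),tR),\alpha(u))$, that is, truncate instead of scale. Then the portions of $\sigma_t$ at radius below $tR$ coincide with $\sigma$. The portion on the circle of radius $tR$ covers an angle that $\sigma$ covers at radii $\ge tR$, and $\sigma$ must spend at least $(R-tR)$ of radial travel to get there. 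This gives $d\le l(\sigma)+2(1-t)R+\text{(arc at radius }tR)$, which is where control is still needed. To bound that arc I would invoke the bounded-ratio property $F(n+1)/F(n)\le\delta$, which gives $|h'|$ bounded, together with the fact that the circle of radius $tR$ separates the relevant regions. If this control fails, a fallback is to prove only $\pi_k=0$ for the cone, which is all Riley needs. For that I would use the hedgehog and $\mathbb{R}^2$ descriptions from lemma \ref{line} and theorem \ref{rtwo} together with the truncation contraction, which is $1$-Lipschitz in the radial coordinate. Finally, applying Riley's propositions 4.7 and 4.9 then yields the constants $K_n$.
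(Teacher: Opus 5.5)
\textbf{There is a genuine gap, and it cannot be closed.} Your route is the paper's route: Riley's criterion together with the radial homotopy $((rd_n,\theta_n))\mapsto((trd_n,\theta_n))$. The paper simply declares this homotopy a contraction and never checks that it is well defined or continuous, which is exactly the step you flag. Your truncation does not close it. The arc at radius $tR$ costs the swept angle times $f(tR)$, whereas $\sigma$ paid for that angle only at radii $\rho\ge tR$. The bound $F(n+1)/F(n)\le\delta$ only makes $h=\log f$ Lipschitz, so $f(tR)/f(\rho)$ can be of order $e^{cR}$ rather than bounded.

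\textbf{A counterexample to well-definedness.} Take $R_{k+1}/R_k\to\infty$ and $f=e^{h}$, where $h$ is a smoothing of $r\mapsto\mathrm{dist}(r,\{R_k\})$ with $h(R_k)=0$ (e.g.\ $h(r)=\log\cosh(r-R_k)$ near $R_k$). Then $h'$ and $h''$ are bounded, so the curvature $-(h''+(h')^2)$ is bounded, and $f\ge 1$ away from the pole, so this plane has bounded geometry.
\begin{itemize}
\item In $\mathrm{Con}_\omega(M,\mathrm{p},(R_n))$ the points $((R_n,0))$ and $((R_n,\pi))$ coincide, since the circle $r=R_n$ has length $2\pi$.
\item The points $((R_n/2,0))$ and $((R_n/2,\pi))$ are at distance $1$. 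A path joining them either comes within $\eta R_n$ of $p$ or reaches radius $(1-\eta)R_n$, and then has length at least $(1-2\eta)R_n$. Otherwise it stays in $\{\eta R_n<r<(1-\eta)R_n\}$, where $f\ge e^{\eta R_n/2}$ for large $n$, and must still turn through angle $\pi$.
\end{itemize}
So $H(\cdot,1/2)$ is not even well defined.

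\textbf{The statement itself fails.} The same estimate shows more.
\begin{itemize}
\item On $\{0<d_\omega(\cdot,\mathrm{p})<1\}$ the $\omega$-limit of the angle is well defined and locally constant.
\item The whole sphere of cone-radius $1$ is a single point $\mathrm{q}$.
\end{itemize}
Now send such an $x$ to $\pm d_\omega(x,\mathrm{p})\in\mathbb{R}/2\mathbb{Z}$, with sign $+$ exactly when its limit angle is $0$. Send $\mathrm{p}$ to $0$ and $\{d_\omega(\cdot,\mathrm{p})\ge 1\}$ to $1$. This gives a continuous map to the circle. It has degree one on the loop that runs out to $\mathrm{q}$ along $\theta=0$ and back along $\theta=\pi$. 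So this cone has $\pi_1\neq 0$. By the converse half of Riley's equivalence, $\mathrm{Fill}^2_{\mathrm{R},\mu}$ is then unbounded for every choice of $\mathrm{R},\mu$. The theorem is false for general bounded-geometry planes, so no repair of the contraction, yours or the paper's, can prove it.

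\textbf{Your fallback does not help either.} Lemma \ref{line} and theorem \ref{rtwo} use $f(r_n)$ where $f(Dr_n)$ is needed. In the example above $f(R_n)/R_n\to 0$, yet the cone is not $[0,\infty)$.

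\textbf{What your argument does prove.} It handles the case where $f$ is non-decreasing. Then $(r,\theta)\mapsto(tr,\theta)$ is $1$-Lipschitz on $M$, because $f(tr)\le f(r)$. Hence $H$ is well defined and $1$-Lipschitz in $\mathrm{a}$, and the cones based at $\mathrm{p}$ are contractible. You would still have to treat the other base-point sequences that Riley's criterion requires. Some monotonicity hypothesis of this kind has to be added to the statement.
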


\begin{proof}
$(M,g)$ being a geodesic metric space ensures that $\mathrm{Con}_\omega (M, (r_n))$ is path connected.

From theorem \ref{radsim} and corollary 2.5 of \cite{Riley}, we can assume that $(M,g)$ is a radially symmetric plane with bounded geometry with $p \in M$ being its pole .

For the sake of computation assume that $\mathrm{Con}_\omega(M) = \mathrm{Con}_\omega (M, \mathrm{p}, (d_n))$ where $\mathrm{p} =(p_n) \in M^\mathbb{N}$ with $p_n = p$ for all $n \in \mathbb{N}$ and $\lim_\omega d_n = \infty$.

Let $f : \mathbb{S}^n \to \mathrm{Con}_\omega(M)$ be a map from $\mathbb{S}^n$ to $\mathrm{Con}_\omega(M)$.
Define $F : \mathbb{S}^n \times [0,1] \to \mathrm{Con}_\omega(M)$ where
\begin{equation*}
F(s,t) = (( trd_n, \theta_n)) \text{ where } f(s) = ((r d_n, \theta_n))  \in \mathrm{Con}_\omega(M,(d_n))
\end{equation*}
It is evident that $F(s, 0) = \mathrm{p}$ for all $s \in \mathbb{S}^n$ and that $F(s,1) = f(s)$ for all $s \in \mathbb{S}^n$.
Therefore, $\mathrm{Con}_\omega(M,(d_n))$ is contractible.
As $\omega$ is an arbitrary non-principal ultrafilter and as $(d_n)$ is an arbitrary sequence of scaling factors with $\lim_\omega d_n = \infty$, proposition 4.9 of \cite{Riley} implies the existence of $\mathrm{R}, \mu$ such that $\mathrm{Fill}^n_{\mathrm{R}, \mu}$ are bounded for all $n \in \mathbb{N}$. 
\end{proof}

\section{Quasi-Redirection}
\label{sec: quasi-red}
In this section, we shall give a brief overview of quasi-redirection and the quasi-redirecting boundary which were introduced by Qing and Rafi to generalise the notion of Gromov hyperbolicity and the resulting compactification. 
For the rest of this section, we shall assume that all metric spaces are proper and geodesic. 

\begin{defn}
Let $\alpha, \beta : [ 0 , \infty) \to X$ be two quasi-geodesic rays in $X$.
We say that $\alpha $ can be \textbf{quasi-redirected} to $\beta$ ( denoted by $\alpha \preceq \beta$ ) if there exists a pair of constants $(q,Q)$ with $q \geq 1$ and $Q \geq 0$ such that for every $r > 0$, there exists a $(q,Q)-$quasi-geodesic ray $\gamma_r$ that is identical to $\alpha$ inside of $B(\alpha(0),r)$ and there exists a $T_r$ such that for all $t \geq T_r$, we have that $\gamma_r(t)$ lies on $\beta$ .
If $\alpha \preceq \beta$ and $\beta \preceq \alpha$, then we say that $\alpha \sim \beta$.
\end{defn}

Denote by $P(X)$ the resulting set of equivalence classes of quasi-geodesic rays which is topologised in a particular manner provided $X$ satisfies these three assumptions listed below-

$\textbf{Assumption 0}$- $X$ is a proper, geodesic metric space. Furthermore, there exists a pair of constants $(q_0, Q_0)$ such that every $x \in X$ lies on an infinite $(q_0, Q_0)-$quasi-geodesic ray .

$\textbf{Assumption 1}$-  For $(q_0, Q_0)$ as in Assumption 0, every equivalence class of quasi-geodesics $\mathfrak{a} \in P(X)$ contain s a $(q_0,Q_0)-$quasi-geodesic ray. Fix such a $(q_0,Q_0)$-ray and call it the central element $\underline{a}$ of $\mathfrak{a}$ .

$\textbf{Assumption 2}-$ For every $\mathfrak{al} \in P(X)$, there is a function $f_\mathfrak{a} : [1, \infty) \times [0, \infty) \to [1, \infty) \times [0, \infty)$ such that if $\beta \prec \alpha $, then any $(q,Q)-$ray $\beta \in \mathfrak{b}$ can be $f_\mathfrak{a}((q,Q))-$quasi-redirected to $\underline{a}$ .

$\textbf{Assumption 3}-$If $\alpha \prec \beta$, then $\beta \prec \alpha$.

\textbf{Remark}- We say that quasi-redirection on a metric space $(X,d)$ is symmetric if it satisfies Assumption 3.

\textbf{Topologising the quasi-redirecting boundary}- For each $x \in X$, define $\mathbf {x} = \{$ quasi-geodesic rays passing through $x \}$ and as points in $P(X)$ are equivalence classes of quasi-geodesic rays, we topologise $\overline{X} =X \cup P(X)$ by letting $\mathfrak{a} \in P(X)$,

we define $\mathcal{U}(\mathfrak{a}, r) = \{ \mathfrak{b} \in P(X) \cup X |$ every $(q,Q)-$ray in $\mathfrak{b}$ can be $F_\mathfrak{a}((q,Q))-$ redirected to $\underline{a}$ at radius $r$ . 

If $(X,d)$ is a bounded metric space, then we get that $\partial X = \Phi$ as $(X,d)$ has no quasi-geodesic rays.
If $(X,d)$ is Gromov hyperbolic, then we get from \cite{QingRafi} that the quasi-redirecting boundary is the Gromov boundary and the quasi-redirecting compactification is the Gromov compactification.

\begin{defn}
We shall say that a metric space $(X,d)$ is monodirectional if $P(X)$ has precisely one element. 
It is obvious that for such a space, $\partial X$ is also a single point.
\end{defn}

\begin{lemma}
Let $(X,d)$ be a geodesic metric space which satisfies Assumptions 0-2.
Then $\overline{X}$ is Hausdorff if and only if $X$ satisfies Assumption 3.
\end{lemma}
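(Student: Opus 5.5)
We prove the two implications separately. Throughout, the topology on $\overline{X} = X \cup P(X)$ is the one generated by the open sets of $X$ together with the neighbourhoods $\mathcal{U}(\mathfrak{a}, r)$ for $\mathfrak{a} \in P(X)$ and $r>0$. These neighbourhoods are built from the central elements $\underline{a}$ of Assumption 1 and the functions $f_\mathfrak{a}$ of Assumption 2. Points of $X$ are separated from each other by metric balls. A point $x \in X$ is separated from $\mathfrak{a} \in P(X)$ by $B(x,1)$ and $\mathcal{U}(\mathfrak{a}, r)$ with $r$ large: quasi-geodesic rays that are redirected to $\underline{a}$ at radius $r$ must agree with $\underline{a}$ on $B(\underline{a}(0),r)$ and then leave every bounded set. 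So everything reduces to separating two boundary points $\mathfrak{a} \neq \mathfrak{b}$ of $P(X)$.

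\emph{Assumption 3 $\Rightarrow$ Hausdorff.} Suppose $\mathfrak{a} \neq \mathfrak{b}$ and, for contradiction, that $\mathcal{U}(\mathfrak{a}, r) \cap \mathcal{U}(\mathfrak{b}, r) \neq \emptyset$ for every $r$. For each $r$ pick a class $\mathfrak{c}_r$ in the intersection. Each $(q,Q)$-ray in $\mathfrak{c}_r$ can then be $f_\mathfrak{a}(q,Q)$-redirected to $\underline{a}$ at radius $r$ and $f_\mathfrak{b}(q,Q)$-redirected to $\underline{b}$ at radius $r$. By Assumption 1 we may take the central $(q_0,Q_0)$-ray of $\mathfrak{c}_r$, so both redirecting constants are uniform, namely $f_\mathfrak{a}(q_0,Q_0)$ and $f_\mathfrak{b}(q_0,Q_0)$.

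We then concatenate: follow $\underline{a}$ up to radius $r$, pass through the redirecting path of $\mathfrak{c}_r$ back to its ray, and then follow the redirecting path to $\underline{b}$. The aim is a quasi-geodesic ray with constants independent of $r$ that agrees with $\underline{a}$ on $B(\underline{a}(0), r)$ and eventually lies on $\underline{b}$, which gives $\underline{a} \preceq \underline{b}$. Assumption 3 then gives $\underline{b} \preceq \underline{a}$, hence $\mathfrak{a} = \mathfrak{b}$, a contradiction.

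The delicate step, and the main obstacle, is checking that this concatenation is a uniform quasi-geodesic. We would mimic the surgery lemma of Qing--Rafi: a concatenation of two $(q,Q)$-rays through a point at distance about $r$ from the base point is a $(q',Q')$-ray, with $q',Q'$ depending only on $(q,Q)$, provided the second piece stays outside a ball of comparable radius. We would first try to establish this by choosing the gluing point at the first exit of $B(\underline{a}(0), r/2)$.

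\emph{Hausdorff $\Rightarrow$ Assumption 3.} Suppose $\alpha \prec \beta$ but $\beta \not\prec \alpha$, and let $\mathfrak{a}, \mathfrak{b}$ be their classes, which are distinct. Since $\alpha \preceq \beta$, Assumption 2 says that $\alpha$, and every ray in $\mathfrak{a}$, can be $f_\mathfrak{b}$-redirected to $\underline{b}$ at every radius. Hence $\mathfrak{a} \in \mathcal{U}(\mathfrak{b}, r)$ for all $r$, so every neighbourhood of $\mathfrak{b}$ contains $\mathfrak{a}$. This contradicts the Hausdorff property, because there is then no open set containing $\mathfrak{b}$ but not $\mathfrak{a}$.
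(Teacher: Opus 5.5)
\textbf{Verdict.} Your direction ``Hausdorff $\Rightarrow$ Assumption 3'' is exactly the paper's argument: $\alpha\preceq\beta$ puts $\mathfrak{a}$ in every $\mathcal{U}(\mathfrak{b},r)$, so $\mathfrak{b}$ has no neighbourhood avoiding $\mathfrak{a}$. The converse is where your proposal has a genuine gap.

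For that converse, the paper only notes that $\mathfrak{a}\in\mathcal{U}(\mathfrak{b},r)$ for all $r$ forces $\mathfrak{a}\preceq\mathfrak{b}$, and hence $\mathfrak{a}=\mathfrak{b}$ by symmetry. That is a $T_1$-type separation. You are right that Hausdorffness needs $\mathcal{U}(\mathfrak{a},r)\cap\mathcal{U}(\mathfrak{b},r)=\emptyset$ for some $r$. However, the concatenation you propose for this cannot work as described.

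Write $o$ for the base point, $\gamma_r$ for the central ray of $\mathfrak{c}_r$, and $\zeta^a_r,\zeta^b_r$ for its redirecting rays. Membership $\mathfrak{c}_r\in\mathcal{U}(\mathfrak{a},r)$ only says that $\zeta^a_r$ coincides with $\gamma_r$ inside $B(o,r)$ and lands on $\underline{a}$ at some uncontrolled time. It says nothing about where $\underline{a}$ lies inside $B(o,r)$. So ``follow $\underline{a}$ to radius $r$, then go back along the redirecting path'' fails in one of two ways. It is not connected if $\zeta^a_r$ meets $\underline{a}$ only beyond radius $r$. Otherwise it contains a U-turn of unbounded length.

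Gluing at the first exit from $B(o,r/2)$ presupposes that $\underline{a}$ and $\gamma_r$ are uniformly close there. That is a fellow-travelling property of hyperbolic spaces, not a consequence of Assumptions 0--3. For example, take a radially symmetric plane with $f(r)=r^2$ near infinity. Radial rays from the pole whose directions differ by $k/r$ can be redirected to each other at radius $r$ with constants depending only on $k$: spiral outwards with $\theta'(\rho)=2k/\rho^2$. Yet these rays are at distance of order $r$ at radius $r/2$. Nor can you apply Assumption 3 to $\mathfrak{c}_r$ itself, because $\mathfrak{c}_r\in\mathcal{U}(\mathfrak{a},r)$ is redirection at the single radius $r$, not $\mathfrak{c}_r\preceq\mathfrak{a}$.

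\textbf{A possible fix.} What is missing is a genuine $\preceq$-relation to which symmetry and transitivity can be applied; one way to get it is the following.
\begin{enumerate}
\item The $\gamma_r$ are $(q_0,Q_0)$-rays from $o$ in a proper space. So, after taming, a subsequence $\gamma_{r_k}$ converges uniformly on compacta to a ray $\gamma_\infty$.
\item Follow $\gamma_\infty$ up to radius $R$, step to the nearby $\gamma_{r_k}$ with $r_k\gg R$, then follow $\zeta^a_{r_k}$ (resp.\ $\zeta^b_{r_k}$). This shows $\gamma_\infty\preceq\underline{a}$ and $\gamma_\infty\preceq\underline{b}$ with constants independent of $R$.
\item Assumption 3 turns the first relation into $\underline{a}\preceq\gamma_\infty$.
\item Transitivity of $\preceq$ then gives $\underline{a}\preceq\underline{b}$. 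Its concatenation is glued far out along the common ray $\gamma_\infty$, so no fellow-travelling is needed.
\item Symmetrically $\underline{b}\preceq\underline{a}$, so $\mathfrak{a}=\mathfrak{b}$.
\end{enumerate}

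\textbf{A smaller slip.} When separating $x\in X$ from $\mathfrak{a}$, you say the redirected rays agree with $\underline{a}$ inside the ball. In fact a ray redirected to $\underline{a}$ at radius $r$ agrees inside the ball with the ray being redirected, not with $\underline{a}$. So what keeps $B(x,1)$ out of $\mathcal{U}(\mathfrak{a},r)$ must come from how points of $X$ are admitted into $\mathcal{U}(\mathfrak{a},r)$.
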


\begin{proof}
Suppose that $\alpha \prec \beta$ and $\beta \nprec \alpha$.
Let $\mathrm{a} \in \partial X$ be the redirecting class of $\alpha$ and let $\mathrm{b} \in \partial X$ be the redirecting class of $\beta$.

We see that $\mathrm{a} \in \mathcal{U}( \mathrm{b}, r)$ for all $r > 0$ but there exists $R > 0$ such that for all $r \geq R$, $\mathrm{b} \notin \mathcal{U}( \mathrm{a}, r)$ which ensures that $\overline{X}$ is not Hausdorff.

If quasi-redirection is symmetric, then $\mathrm{a} \in \mathcal{U}( \mathrm{b},r)$ for all $r > 0$ implies that $\mathrm{a} \prec \mathrm{b}$.
Symmetry of quasi-redirection should force $\mathrm{a} = \mathrm{b}$.
Therefore, $\partial M$ is Hausdorff.
$X \subset \overline{X}$ being open in turn forces $\overline{X}$ to be Hausdorff if $\partial X$ is Hausdorff.
\end{proof}

\begin{lemma}
\label{homeo}
Let $(X,d)$ and $(Y,d')$ be two geodesic metric spaces which satisfy Assumptions 0,1 and 2 and let $\Psi : X \to Y$ be a quasi-isometry.
Then $\overline{X}$ is homeomorphic to $\overline{Y}$.
\end{lemma}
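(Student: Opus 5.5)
The plan is to show that $\Psi$ extends to a map $\overline{\Psi} : \overline{X} \to \overline{Y}$. On $X$ it agrees with $\Psi$, and on $P(X)$ it is induced by pushing forward quasi-geodesic rays. The extension is then a bijection, continuous in both directions, using a quasi-inverse $\Phi : Y \to X$ of $\Psi$.

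\textbf{Step 1: the boundary map.} Let $\Psi$ be a $(k,K)$-quasi-isometry and let $\Phi$ be a quasi-inverse with $d(\Phi\circ\Psi(x),x)\le C$. The image of a $(q,Q)$-ray under $\Psi$ is a $(kq, kQ+K)$-quasi-geodesic ray, after the usual modification to make it continuous by connecting consecutive images with geodesics.
\begin{itemize}
\item If $\alpha \preceq \beta$ via redirecting rays $\gamma_r$, then $\Psi\circ\gamma_r$ agrees with $\Psi\circ\alpha$ on $B(\Psi\alpha(0), r/k - K)$. It also eventually lies on $\Psi\circ\beta$ and has uniform constants, so $\Psi\circ\alpha \preceq \Psi\circ\beta$.
\item Consequently $\Psi$ respects $\sim$ and induces $\Psi_* : P(X) \to P(Y)$.
\item Likewise $\Phi$ induces $\Phi_*$. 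Since $\Phi\circ\Psi\circ\alpha$ is at uniformly bounded distance $C$ from $\alpha$, splicing shows $\Phi_*\Psi_* = \mathrm{id}$, and symmetrically $\Psi_*\Phi_*=\mathrm{id}$.
\end{itemize}
The splicing works by running along $\alpha$ up to radius $r$, jumping by a path of length at most $C$, then following $\Phi\Psi\alpha$; these rays have uniform constants. Hence $\Psi_*$ is a bijection.

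\textbf{Step 2: continuity at boundary points.} This is the main step. Fix $\mathfrak a \in P(X)$ and a basic neighbourhood $\mathcal U(\Psi_*\mathfrak a, r)$ in $\overline{Y}$. We must find $r'$ with $\overline\Psi(\mathcal U(\mathfrak a, r')) \subset \mathcal U(\Psi_*\mathfrak a, r)$, and we do this by comparing the functions $f_{\mathfrak a}$ and $f_{\Psi_*\mathfrak a}$ from Assumption 2.
\begin{itemize}
\item Take $\mathfrak b \in \mathcal U(\mathfrak a, r')$ and a $(q,Q)$-ray $\beta'$ in $\Psi_*\mathfrak b$.
\item Pull it back: $\Phi\beta'$ is a ray in $\mathfrak b$ with controlled constants, so by assumption it can be $f_{\mathfrak a}$-redirected to $\underline a$ at radius $r'$.
\item Push forward: the result is a redirection of $\Psi\Phi\beta'$, which is close to $\beta'$, onto $\Psi\underline a$ at radius about $r'/k - K$.
\item Splice with the $C$-closeness, then use Assumption 2 in $Y$ to redirect $\Psi\underline a$ to the central element of $\Psi_*\mathfrak a$. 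The result is a redirection of $\beta'$ to that central element at radius $r$, provided $r'$ is chosen to be about $k r + K$ plus constants.
\end{itemize}
The delicate point, which I expect to be the main obstacle, is that the constants stay of the form demanded by $F_{\Psi_*\mathfrak a}$. I would handle this by redefining $f$ on $Y$, enlarging it by the distortion of $\Psi$; this is harmless, since the topology does not depend on the choice of $f$ (as in Qing--Rafi).

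\textbf{Step 3: interior points and conclusion.} For points $x\in X$ lying in boundary neighbourhoods, the same estimates apply to the rays through $x$ guaranteed by Assumption 0. Continuity at interior points needs some care, since $\Psi$ need not be continuous. I would first replace $\Psi$ by a continuous quasi-isometry, by taking a net and extending piecewise geodesically; this is possible since the spaces are geodesic. The symmetric argument gives continuity of $\overline\Phi$. On the boundary $\overline\Phi\circ\overline\Psi$ is the identity. On $X$ it moves points by at most $C$, and a further replacement (or using the spaces' properness and working modulo bounded perturbations) yields genuine inverse homeomorphisms, so $\overline{X}\cong\overline{Y}$.
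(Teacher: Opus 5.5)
\textbf{The gap is in Step 3, and it cannot be closed.} Your Steps 1 and 2 essentially reprove Qing and Rafi's theorem that a quasi-isometry induces a homeomorphism $\Psi_*:P(X)\to P(Y)$. The paper simply cites this result.

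In Step 3 you correctly flag that $\Psi$ need not be continuous, but the proposed repair does not work. Grant continuous replacements of $\Psi$ and $\Phi$. Even then, all you know on $X$ is that $\overline\Phi\circ\overline\Psi$ moves points by at most $C$. No ``further replacement'' and no ``working modulo bounded perturbations'' turns maps that are inverse up to bounded error into mutually inverse maps. A quasi-isometry sees nothing of the local topology of $X$ and $Y$.

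In fact the statement is false as written. Take $X=[0,\infty)$ and $Y=[0,\infty)\times[0,1]$ with the Euclidean metric.
\begin{itemize}
\item Both spaces are proper and geodesic, and the inclusion is a quasi-isometry.
\item Both have a one-point quasi-redirecting boundary, so Assumptions 0--2 hold trivially.
\item $Y$ is open in $\overline Y$ and $X$ is open in $\overline X$ (the paper uses this itself).
\item So a homeomorphism $\overline Y\to\overline X$ would carry the open set $(0,\infty)\times(0,1)$, minus at most one point, homeomorphically onto an open subset of $[0,\infty)$.
\item That is impossible: a small disc stays connected after removing a point, while a nondegenerate interval does not.
\end{itemize}
The pair $\mathbb{R}^2$ and the Cayley graph of $\mathbb{Z}^2$ gives another counterexample.

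The paper's own proof has exactly the same hole. It asserts that ``$\Psi$ and $\Psi^*$ are both homeomorphisms'', but $\Psi$ is only assumed to be a quasi-isometry.

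What does hold is the version with the extra hypothesis that $\Psi:X\to Y$ is also a homeomorphism. This covers the map $(r,\theta)\mapsto(r,\theta)$ between radially symmetric planes and the conformal quasi-isometries the paper later extends. Even then, gluing $\Psi$ with $\Psi_*$ does not automatically give a homeomorphism $\overline X\to\overline Y$. You still have to check continuity of the glued map and of its inverse at boundary points, along sequences coming from $X$. The paper's one-line argument omits this. The first sentence of your Step 3, which applies the Step 2 estimates to rays through interior points, is the right tool for it, and that part of your plan is worth keeping.
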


\begin{proof}
It follows from theorem 5.9 of \cite{QingRafi} that the induced map $\Psi^* : \partial X \to \partial Y$ is a homeomorphism.
Denote as $\overline{\Psi} : \overline{X} \to \overline{Y}$ the extension of $\Psi : X\to Y$ to their quasi-redirecting bordification such that $\overline{\Psi}|_{\partial X} = \Psi^*$.
As $\Psi$ and $\Psi^*$ are both homeomorphisms, it follows that $\overline{\Psi}$ is also a homeomorphism.
\end{proof}

\subsection{Quasi-redirection in Riemannian planes}
As quasi-redirection is invariant under quasi-isometries, we can just work with radially symmetric planes without losing any generality and understanding the divergence of geodesics is a crucial tool in understanding quasi-redirection in planes.

In this section, we shall study radially symmetric planes $(M,g)$ which have a pole at $p \in M$ and which has a metric $g = dr^2 + f(r)^2d\theta^2$ .

\begin{lemma}
\label{mono}
Let $(M,g)$ be a Riemannian plane such that there exists an asymptotic cone $\mathrm{Con}_\omega(M, (r_n))$ which is proper.
Then $(M,g)$ is monodirectional.
\end{lemma}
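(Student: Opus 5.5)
By Theorem \ref{radsim} and the quasi-isometry invariance of quasi-redirection (Lemma \ref{homeo} and theorem 5.9 of \cite{QingRafi}), we may assume that $(M,g)$ is radially symmetric with pole $p$, metric $g = dr^2 + f(r)^2 d\theta^2$, and bounded geometry. The plan is to turn properness of one asymptotic cone into a growth condition on $f$ along a sequence of radii. We then use those radii to quasi-redirect any quasi-geodesic ray to the central ray $\underline{a}(t) = (t,0)$, and conversely. Since $\lim_\omega r_n = \infty$, the sequence $(r_n)$ must be $\omega$-unbounded.

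\textbf{Step 1 (properness gives slow circles).} Suppose $\mathrm{Con}_\omega(M,(r_n))$ is proper. We claim that $\lim_\omega f(r_n)/r_n < \infty$. If this failed, we could argue as in the proof of Theorem \ref{spread}, using the $\mathbb{S}^1$ action together with Corollary \ref{coord}. The points $((r_n,\theta))_{\theta \in \mathbb{S}^1}$ would then be pairwise at $d_\omega$-distance bounded below, since a path joining $(r_n,\theta)$ to $(r_n,\theta')$ must either go around a circle of length comparable to $f(r_n)|\theta-\theta'|$ or pass near $p$. Their radial segments would form a closed, bounded hedgehog that is not compact, contradicting properness. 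The delicate point is that Theorem \ref{spread} assumes $f(r)/r \to \infty$ for all $r$, whereas here the hypothesis holds only along $\omega$-many $r_n$. To handle this I would use the lower bound $\inf_{s \in [r_n/2, 2r_n]} f(s)/r_n$, which is controlled by the bounded derivative $|h'|$ from the construction in Theorem \ref{radsim}, since $f = e^h$. Granting the claim, we obtain a constant $\Lambda$ and infinitely many radii $R_k \to \infty$ with $f(R_k) \le \Lambda R_k$. Consequently the circle $S(p,R_k)$ has length at most $2\pi \Lambda R_k$.

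\textbf{Step 2 (redirection).} Let $\alpha$ be any $(q,Q)$-quasi-geodesic ray; after a bounded modification we may assume $\alpha(0)=p$. Given $r>0$, pick $R_k \gg r$ and let $t_k$ be a time with $d(p,\alpha(t_k)) = R_k$. Define $\gamma_r$ as $\alpha$ on $[0,t_k]$, followed by an arc of $S(p,R_k)$ from $\alpha(t_k)$ to $(R_k,0)$, followed by $\underline{a}$ from $R_k$ onward. The arc has length at most $\pi\Lambda R_k$, which is linear in the distance from the basepoint. A concatenation of a ray reaching radius $R$, a detour of length at most $CR$ along the sphere of radius $R$, and a radial geodesic ray is a $(q',Q')$-quasi-geodesic, with $q',Q'$ depending only on $q,Q,\Lambda$. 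To verify this, note that radial distance to $p$ is $1$-Lipschitz and grows linearly along both $\alpha$ and $\underline{a}$; this linear growth prevents backtracking. Hence $\alpha \preceq \underline{a}$ with uniform constants. The same construction, started along $\underline{a}$ and ending along $\alpha$ past radius $R_k$, gives $\underline{a} \preceq \alpha$. Therefore all rays are equivalent, $P(M)$ is a single point, and $M$ is monodirectional.

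\textbf{Main obstacle.} I expect the hardest part to be Step 1: extracting from properness of a single cone a linear bound $f(R_k) \le \Lambda R_k$ along a sequence of radii. This requires ruling out cones with angular directions that are neither collapsed nor uniformly spread, and that is where the derivative bound on $h$ must be used carefully. The quasi-geodesic estimate in Step 2 is routine by comparison.
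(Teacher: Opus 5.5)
Your overall strategy matches how the paper's proof begins: find linearly short circles along a sequence of radii, then detour around them. However, both of your steps contain a claim that is false as stated.

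In Step~1, the claim $\lim_\omega f(r_n)/r_n<\infty$ for the sequence of the given proper cone is false, and the bound on $h'$ cannot rescue it. The estimate $|(\log f)'|\le C$ only gives $f(s)\ge f(r_n)e^{-C|s-r_n|}$, which is vacuous on $[r_n/2,2r_n]$. For a counterexample, take $\log f(r)=\log r+n\,\psi\bigl((r-r_n)/n\bigr)$ near $r_n=2^{2^n}$, with $\psi\ge 0$ a smooth unit bump. The geometry is bounded and $f(r_n)/r_n=e^n\to\infty$. Yet $f\ge r$, and $f=r$ off annuli whose width ($2n$ at $r_n$) is $o(r_n)$, so $\mathrm{Con}_\omega(M,(r_n))$ is the Euclidean plane. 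Paths from $(r_n,\theta)$ to $(r_n,\theta')$ simply dip slightly inside $S(p,r_n)$, so your dichotomy (go around $S(p,r_n)$ or pass near $p$) is wrong. You do not need this claim. The contrapositive of Theorem~\ref{spread} already gives $\liminf_{r\to\infty}f(r)/r<\infty$, hence radii $R_k\to\infty$ with $f(R_k)\le\Lambda R_k$. This is exactly what the paper uses. Under that theorem's full hypothesis $\lim_{r\to\infty}f(r)/r=\infty$, your infimum problem disappears, because $\inf_{s\ge r_n/2}f(s)/r_n\to\infty$.

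In Step~2, you assert that a ray to radius $R$, followed by a detour of length $\le CR$ along $S(p,R)$ and then a radial ray, is a uniform quasi-geodesic. This fails for two reasons.

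(a) The bound $f(R_k)\le\Lambda R_k$ does not make arcs of $S(p,R_k)$ quasi-geodesic. Bounded geometry lets $f$ fall from $\asymp R_k$ to $O(1)$ within distance $O(\log R_k)$ of $R_k$, and this is consistent with a proper cone. Antipodal points of $S(p,R_k)$ are then $O(\log R_k)$ apart, while the arc joining them has length $\asymp R_k$. You must choose the radius. Let $s_k$ minimise $v\mapsto f(v)+\mu|v-R_k|$ with $\mu\ge\max(1,2\Lambda)$. Then $s_k\in[R_k/2,3R_k/2]$, $f(s_k)\le 2\Lambda s_k$, and $f(s_k)\le f(v)+\mu|v-s_k|$ for every $v$. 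Combine this with $d\bigl((s,\theta_1),(s,\theta_2)\bigr)\ge\frac12\min_v\bigl(2|s-v|+f(v)|\theta_1-\theta_2|\bigr)$, where $|\theta_1-\theta_2|\le\pi$ is the angular distance. The result is that arcs of angle $\le\pi$ on $S(p,s_k)$ are $(\pi\mu,0)$-quasi-geodesics.

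(b) The function $d(p,\cdot)$ does not prevent backtracking, since it is constant on the arc. A $(q,Q)$-ray may run out along $\underline a$ to radius $R_k-1$, go three quarters of the way around $S(p,R_k-1)$, and only then reach $S(p,R_k)$. Each arc from $\alpha(t_k)$ to $(R_k,0)$ then passes within distance $1$ of an earlier point of $\alpha$, at parameter gap $\asymp f(R_k)$. That gap is $\asymp R_k$, for example, in the Euclidean plane. Your reverse construction fails the same way, because after the junction $\alpha$ may run back alongside the arc.

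Both defects can be repaired. For $\alpha\preceq\underline a$, leave $\alpha$ at its first exit from $B(p,r)$. Go radially out to $S(p,s_k)$ with $s_k\gg r$, then around it, then along $\underline a$. For $\underline a\preceq\alpha$, follow $\underline a$ to $s_k$ and go around $S(p,s_k)$ to the angle of the point $\alpha(t_k)$ where $\alpha$ last leaves $B(p,2s_k)$. Then go radially out to $\alpha(t_k)$ and follow $\alpha$. Now $d(p,\cdot)$ genuinely separates the pieces. The triangle-inequality estimates you have in mind then give constants depending only on $q,Q,\Lambda$.

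The paper takes a different route: it zigzags a single ray between $\alpha$ and $-\alpha$ through the curves $\Gamma_{r_n}$ and invokes radial symmetry. It declares that ray quasi-geodesic without addressing issues of this kind, so you cannot borrow its argument here. Once repaired, your direct two-way redirection is the more complete argument.
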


\begin{proof}
We can assume upto quasi-isometry that $(M,g)$ is a radially symmetric plane with $g = dr^2 + f(r)^2 d\theta^2$.
From theorem \ref{spread}, we get that $\liminf \frac{l(f(r))}{r} = \lambda$ for some $\lambda \in [0, \infty)$.

Therefore we can find for each $R >0$ and $\epsilon > 0$, a $r>R$ such that $f(r) < (\lambda + \epsilon)r $.
It follows that there exists a sequence $\{r_n \}_{n\in \mathbb{N}}$ such that $\lim_{n \to \infty} r_n = \infty$ such that $f(r_n) < (\lambda + \epsilon)r_n$ for all $n\in \mathbb{N}$.
Now consider a geodesic $\alpha : [0, \infty) \to M $ with $\alpha(0) = p$ and let $-\alpha : [0, \infty) \to M$ be a geodesic ray where $-\alpha(t) = \exp_p( -\exp_p^{-1}(\alpha(t)))$.
Denote by $\Gamma_r$ the shortest path joining $\alpha(r)$ and $-\alpha(r)$ lying entirely in $\overline{M \setminus B(p, r)}$,
We shall now construct a quasi-geodesic ray $\gamma : [0, \infty) \to (M,g)$ where 
\[
\gamma(t) = \begin{cases}
    \alpha(t) &\text{for } t \in [0, r_1] \\
    \Gamma_{r_1}(t -r_1) &\text{for } t \in [r_1, 2r_1] \\
    -\alpha(t - r_1) &\text{for } t \in [r_1, r_2] \\
    \Gamma_{r_2}(2r_2 -t) &\text{for } t \in [r_2, 2r_2] \\
    ......
\end{cases}
\]

It is obvious from construction that $\gamma$ is a $(\lambda+ \epsilon, 0)-$quasi-geodesic whose limit points in $UT_pM$ is an arc $\mathfrak{a}$ with endpoints $\exp_p^{-1}(\alpha(1))$ and $\exp_p^{-1}(-\alpha(1))$.
From this, we get that $\gamma \sim \beta$ where $\beta : [0, \infty) \to \mathbb{R}$ is a quasi-geodesic ray which has a limit point in $\mathfrak{a}$. 
This taken together with radial symmetry forces $(M,g)$ to be monodirectional.
\end{proof}

Now we shall see that the quasi-redirecting bordification of $(M,g)$ is indeed its one-point compactification. 

\begin{theorem}
\label{sphere}
Let $(M,g)$ be a Riemannian plane which has a proper asymptotic cone for some non-principal ultrafilter and some sequence of positive numbers $(r_n)$ with $\lim_\omega r_n = \infty$.
Then $\overline{M}$ is homeomorphic to $\mathbb{S}^2$.
\end{theorem}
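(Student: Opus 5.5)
The plan is to reduce the statement to Lemma \ref{mono} together with an identification of the topology of $\overline{M} = M \cup P(M)$ near its unique boundary point. By Theorem \ref{radsim}, $(M,g)$ is quasi-isometric to a radially symmetric plane with bounded geometry. Having a proper asymptotic cone is a quasi-isometry invariant, since asymptotic cones of quasi-isometric spaces are bi-Lipschitz homeomorphic. Lemma \ref{homeo} then lets us replace $(M,g)$ by a radially symmetric plane with pole $p$ and metric $g = dr^2 + f(r)^2 d\theta^2$. For this we must check that both spaces satisfy Assumptions 0--2. Assumption 0 is clear, because every point lies on a radial geodesic ray from $p$ extended backwards through $p$. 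Assumptions 1 and 2 will follow once we know $P(M)$ is a single class: we take as central element a fixed radial ray $\underline{a}$, and the redirection constants come from the explicit detour construction below.

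Next, Lemma \ref{mono} gives that $M$ is monodirectional, so $\partial M = P(M) = \{\mathfrak{a}\}$. Assumption 3 holds trivially, so $\overline{M}$ is Hausdorff by the lemma preceding Lemma \ref{homeo}. It remains to show that $\overline{M}$ is the one-point compactification of $M \cong \mathbb{R}^2$, which is $\mathbb{S}^2$. Concretely, I will show that the sets $\mathcal{U}(\mathfrak{a}, r) \cap M$ are, up to bounded error, exactly the complements of balls $B(p, \rho(r))$ with $\rho(r) \to \infty$. This splits into two inclusions.

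\emph{(i) Far points are in the neighbourhood.} If $d(p,x) \geq r'$ with $r'$ large relative to $r$, then any $(q,Q)$-ray through $x$ can be $F_\mathfrak{a}(q,Q)$-redirected to $\underline{a}$ at radius $r$. To redirect, follow the ray up to radius $r$, then travel along a short circular path at a radius $r_n$ from the sequence in Lemma \ref{mono} satisfying $f(r_n) < (\lambda+\epsilon) r_n$, and then follow $\underline{a}$ outward. The length of the angular detour is at most $2\pi(\lambda+\epsilon) r_n$, which is linear in the radius, so the concatenation is a quasi-geodesic with constants depending only on $(q,Q,\lambda)$.

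\emph{(ii) Near points are excluded.} For fixed $r$, every point in the neighbourhood has distance from $p$ at least about $r/q - Q$. This follows directly from the definition, since a redirected ray must agree with $\underline{a}$ out to radius $r$.

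Together, (i) and (ii) show that the neighbourhood basis of $\mathfrak{a}$ coincides with the basis of complements of compact sets. Hence $\overline{M} \cong \mathbb{R}^2 \cup \{\infty\} \cong \mathbb{S}^2$.

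The main obstacle is the uniformity in (i). The radii $r_n$ with linear circumference come only from the $\liminf$ condition, so they may be sparse. The detour from radius $r$ out to the next good $r_n$ must then be controlled so that the quasi-geodesic constants do not blow up. My first attempt would be to go radially outward from the point of the ray until the first good radius $r_n \geq r$, go around the circle there, and come back along $\underline{a}$. The backtracking is bounded by a multiple of $r_n$. I would then use the freedom of choosing $T_r$ to absorb the remaining error, since the redirected ray only needs to be $(q',Q')$-quasi-geodesic with constants independent of $r$, and the detour's length is comparable to its distance from $p$.
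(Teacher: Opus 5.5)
The proposal is incomplete at step (i), and everything rests on that step. It is your only source for Assumption~2, without which $\overline{M}$ has no topology. It is also what puts complements of large balls into $\mathcal{U}(\mathfrak{a},r)$. You flag it as the main obstacle, but the construction you sketch does not give constants independent of $r$, for two reasons.

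\emph{Backtracking near radius $r$.} Crossing at a good radius close to $r$ can backtrack. Already in $\mathbb{R}^2$ every radius is good, so the first good radius $\ge r$ is $r$ itself. Consider the ray that runs along $\underline{a}$ to $\underline{a}(r-1)$, then along the chord to the point of $S(p,r)$ at angle $\pi/2$ from $\underline{a}$, then radially outward; it is a $(3,0)$-quasi-geodesic. Your redirected ray follows it to that exit point and turns along $S(p,r)$ back to $\underline{a}(r)$. That point is at distance $1$ from $\underline{a}(r-1)$ but about $3r$ later in parameter. A backtrack bounded by a multiple of $r_n$ is exactly what a uniform quasi-geodesic cannot afford. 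The choice of $T_r$ cannot absorb it either, since $T_r$ only records when the new ray joins $\underline{a}$ and has no effect on $(q',Q')$. You must first go out to radius at least $C(q,Q)\,r$.

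\emph{Shortcuts across the arc.} The quantity $\pi(\lambda+\epsilon)r_n$ is only an upper bound on the length of the arc. Quasi-geodesicity also needs $d_M(y,z)\gtrsim$ (length of the arc between $y$ and $z$), and this fails if $M$ has shortcuts through nearby radii where $f$ is much smaller. When $\lambda=0$, a threshold choice of $r_n$ can sit on the outer wall of a deep, narrow well of $f$, which bounded geometry allows. Points of $S(p,r_n)$ are then joined through the well by paths far shorter than the arc.

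To close (i), cross at a radius $\rho^*\ge C(q,Q)\,r$ at which the arc is itself a uniform quasi-geodesic of length $\lesssim\rho^*$.
\begin{itemize}
\item If $\lambda>0$, any large enough good radius works. Since $f(\rho)\ge(\lambda-\epsilon)\rho$ for large $\rho$, a path between $y,z\in S(p,\rho^*)$ at angle $\psi$ either dips below $\rho^*/2$ or has length at least $\psi(\lambda-\epsilon)\rho^*/2$. Hence $d(y,z)\ge c\,\psi\,\rho^*$.
\item If $\lambda=0$, start at a good radius $\rho_0$. As long as some $\rho$ with $|\rho-\rho_j|\le\frac{\pi}{2}f(\rho_j)$ has $f(\rho)<\frac12 f(\rho_j)$, move to it. This stops because $f$ is bounded below on $[1,\infty)$, and it moves the radius by at most $\pi f(\rho_0)\le\pi\epsilon\rho_0$. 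At the final $\rho^*$, every path between points of $S(p,\rho^*)$ at angle $\psi$ has length at least $\frac12\psi f(\rho^*)$.
\end{itemize}
Then verify the quasi-geodesic inequality for each pair of pieces (initial part of the ray, radial segment, arc, $\underline{a}$), combining $d(u,v)\ge|d(p,u)-d(p,v)|$ with the arc estimate.

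Separately, the reason you give for (ii) is wrong. The redirected ray agrees with the ray or segment through $x$ inside $B(p,r)$, not with $\underline{a}$. So (ii) has to come from how points of $M$ are tested in Qing--Rafi's topology, namely through quasi-geodesic segments from $p$ to $x$. With the paper's literal description of $\mathbf{x}$ as all rays through $x$, uniform redirection would put every point of $M$ into every $\mathcal{U}(\mathfrak{a},r)$.

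For comparison, the paper's proof argues only sequentially. After Lemma~\ref{mono}, it notes that every sequence leaving compact sets subconverges, through limits of the segments $[p,w_n]$, to the unique boundary point. It never constructs uniform redirections and never separates $\mathfrak{a}$ from points of $M$. Once (i) is repaired, your neighbourhood-sandwich argument is the more complete route.
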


\begin{proof}
From theorem \ref{radsim} and theorem \ref{spread}, we can assume that $(M,g)$ is quasi-isometric to a radially symmetric plane $(N,h) $with $h = dr^2 + f(r)^2 d\theta^2$ where $\lim_{r \to \infty} \frac{f(r)}{r} = \infty$.
Therefore, we can assume without loss of generality that $(M,g)$ is radially symmetric with $g = dr^2 + f(r)^2 d\theta^2$ where $\liminf_{r \to \infty} \frac{f(r)}{r} < \infty$.
Lemma \ref{mono} ensures that this $(M,g)$ is monodirectional.

Let $(v_n)$ be a sequence of points in $M$ such that $\lim_{n \to \infty} d(v_n, p) = \infty$.
Note that $(v_n)$ has no limit points in $M$ if and only if $\lim_{n \to \infty} d(v_n,p) = \infty$.

We shall denote the geodesic segment joining $p$ and $v_n$ as $[p, v_n]$ and note that there exists a subsequence $(w_n) \subseteq (v_n)$ and a geodesic ray $\gamma :[0, \infty) \to M$ with $\gamma(0) = p$ such that $[p, w_n]$ converges uniformly on compact subsets of $M$ to $\gamma$.
We see that $\lim_{t \to \infty} \gamma(t) \in \partial M$ is a limit point of $(w_n)$ and of $(v_n)$ too as $(w_n)$ is a subsequence of $(v_n)$.

As lemma \ref{mono} ensures that $(M,g)$ is monodirectional, $\partial M$ has only one point .
Therefore, $\lim_{t \to \infty}\gamma(t) = \lim_{t \to \infty} \gamma'(t)$ for all geodesic rays $\gamma$.
This implies that any $(v_n) \subseteq M$ with $\lim_{n \to \infty} d(v_n,p)$ has a unique limit point on $\partial M$.
Therefore, $\overline{M}$ is the one-point compactification of $(M,g)$ which makes it homeomorphic to $\mathbb{S}^2$.
\end{proof} 

From theorem 1 of \cite{Kanai}, we see that $(M,g)$ and $(N,h)$ being Riemannian surfaces exhibiting bounded geometry which are quasi-isometric are also conformally equivalent.
Therefore, we do not lose any generality by limiting ourselves to radially symmetric planes exhibiting bounded geometry if we intend to study properties which are invariant under quasi-isometries.

 \begin{lemma}
 \label{focal}
Let $(M,g)$ be a Riemannian plane with bounded geometry for which $\mathrm{Con}_\omega(M)$ is not proper for any non-principal ultrafilter.
There exists a radially symmetric plane $(N,h)$ with $h = dr^2 + f(r)^2 d\theta^2$ where $f$ is strictly increasing such that $(M,g)$ and $(N,h)$ are quasi-isometric.
 \end{lemma}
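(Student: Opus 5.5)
We start from Theorem \ref{radsim} and assume $(M,g)$ is a radially symmetric plane with pole $p$ and $g = dr^2 + F(r)^2 d\theta^2$, where $F = \exp(h)$ for $r \geq 1$ and $h$ has bounded first and second derivatives. Quasi-isometries preserve asymptotic cones up to bilipschitz homeomorphism (corollary 2.5 of \cite{Riley}), so no asymptotic cone of this radial model is proper. By Theorem \ref{rtwo}, whenever $\lim_\omega F(r_n)/r_n \in (0,\infty)$ the cone is $\mathbb{R}^2$, which is proper. By Lemma \ref{line}, whenever the limit is $0$ the cone is $[0,\infty)$, also proper. Every sequence $r_n \to \infty$ therefore satisfies $\lim_\omega F(r_n)/r_n = \infty$ for every non-principal $\omega$. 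Applying this to every sequence gives
\begin{equation*}
\lim_{r \to \infty} \frac{F(r)}{r} = \infty .
\end{equation*}
This is the only information the hypothesis supplies, and the rest of the argument replaces $F$ by a strictly increasing function at bounded cost.

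The natural candidate is the running maximum $G(r) = \max_{s \leq r} F(s)$. The key estimate is that $G/F$ is bounded. Since $(\log F)' = h'$ is bounded by some $L$, we have $F(s) \leq F(r) e^{L(r-s)}$ for $s \leq r$. So $G(r) \leq C F(r)$ holds as soon as every near-maximum of $F$ on $[0,r]$ occurs within bounded distance of $r$.

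This is the main obstacle. A function with $F(r)/r \to \infty$ can still have long deep valleys, where $F$ drops far below its earlier maximum and stays low for a long time. I would handle it in two steps.

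\textbf{Step 1: make valleys costless.} Replace the angular coordinate comparison by the quasi-isometry $(r,\theta) \mapsto (r,\theta)$ between the metrics with $F$ and with $G$. This map is $1$-Lipschitz in the direction $F \leq G$. For the reverse direction, note that when $G(r) \gg F(r)$ the distance in $(M,g)$ between $(r,\theta)$ and $(r,\theta')$ is governed by going down to radius $\rho$ where $F(\rho)$ is small. The relevant quantity is therefore
\begin{equation*}
\min_\rho \bigl(2|r-\rho| + F(\rho)|\theta - \theta'|\bigr).
\end{equation*}
I would define a modified profile by the infimal convolution $\tilde F(r) = \inf_\rho \bigl(F(\rho) + 2\pi^{-1}|r-\rho|\cdot \text{const}\bigr)$. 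The claim is that this leaves distances unchanged up to bounded multiplicative and additive constants.

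\textbf{Step 2: use monotonicity and smooth.} Using $F(r)/r \to \infty$, argue that $\tilde F$ is eventually comparable to a strictly increasing function, namely $\min_{s \geq r} \tilde F(s)$ adjusted by adding $\epsilon r$. Then smooth it by the partition of unity $\varphi$ from the proof of Theorem \ref{radsim}, which keeps $(\log f)'$ and $(\log f)''$ bounded. Theorem \ref{wellbehaved} then gives bounded geometry for the new plane $(N,h)$.

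Finally, I would check that the identity in polar coordinates is a quasi-isometry. Compare the lengths of radial and angular segments, using that $F$ and $f$ determine comparable "cheapest angular crossings" at each radius. The distance comparison in Step 1 is the point where I am least confident.
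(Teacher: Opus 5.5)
\textbf{The gap is Step 1, and it cannot be closed.} Write $M_F$ for the plane $dr^2+F(r)^2d\theta^2$. At radius $r$, the cost of crossing an angle $\delta$ is roughly $\min_\rho\bigl(2|r-\rho|+F(\rho)\delta\bigr)$. The radial detour $2|r-\rho|$ is paid in full whatever $\delta$ is. In $\tilde F(r)=\inf_\rho\bigl(F(\rho)+c|r-\rho|\bigr)$, however, the detour is charged only $c|r-\rho|\delta$. So the polar-coordinate identity $M_F\to M_{\tilde F}$ is not a quasi-isometry.

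Here is a concrete instance. Suppose $F(s)=a$ and $\log F$ increases at rate $\lambda>0$ on $[s,s+2L]$, and set $r=s+L$, $\delta=1/a$.
\begin{itemize}
\item In $M_F$, a path of length at most $L/2$ from $(r,\theta)$ stays at radii where $F\ge ae^{\lambda L/2}$. Its angular cost alone is then at least $e^{\lambda L/2}$. Hence the distance from $(r,\theta)$ to $(r,\theta+\delta)$ exceeds $L/2$ once $L$ is large.
\item In $M_{\tilde F}$, going around the circle of radius $r$ gives distance at most $\tilde F(r)\delta\le (a+cL)/a$.
\end{itemize}
Take $a=s^2$ and $L=s/4$, which is compatible with bounded $(\log F)'$ and with $F(r)/r\to\infty$. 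The first distance tends to infinity while the second stays bounded. Step 2 only compares $\tilde F$ with monotone profiles, so it cannot repair this.

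\textbf{The statement itself is false, so no repair of Step 1 can work.} Your valley obstruction is a genuine quasi-isometry invariant, not a technicality. Take $F(r)=r^2$ except on disjoint intervals $[s_k,2s_k]$. There, $\log F$ climbs at slope about $1$ to roughly $s_k/2$ above $\log s_k^2$, then descends back to $\log(4s_k^2)$, smoothed so that $(\log F)'$ and $(\log F)''$ stay bounded. This plane has bounded geometry, and $F(r)\ge r^2$ eventually, so every asymptotic cone is non-proper. Yet the circle $S(p,2s_k)$, of length $8\pi s_k^2$, bounds a disc of diameter at most $4s_k$ and area at least $e^{cs_k}$.

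Now let $\tilde f$ be nondecreasing, and let $U$ be the region enclosed by a loop of length $\ell$ in $M_{\tilde f}$. The outward radial ray from a point of $U$ at radius $\rho$ must meet the loop at radius at least $\rho$, where $\tilde f\ge\tilde f(\rho)$. So $U\cap S_\rho$ has angular measure at most $\ell/\tilde f(\rho)$, that is, length at most $\ell$, for every $\rho$. This gives $\mathrm{area}(U)\le \ell(\ell+\mathrm{diam}\,U)$. A coarse version of this bound passes through quasi-isometries between planes, because the image of a disc lies, up to bounded error, inside the image of its boundary loop. With $\ell\sim s_k^2$ and diameter $\sim s_k$, the bound contradicts the area $e^{cs_k}$.

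\textbf{Where the paper's proof breaks.} It fails exactly at your obstacle. It deduces $f'(r)\to\infty$ from $f(r)/r\to\infty$ by L'H\^opital's rule, which runs the rule backwards; $f=r^2(2+\sin r)$ is a counterexample. Your reduction to $F(r)/r\to\infty$ also inherits a defect from Theorem \ref{rtwo} and Lemma \ref{line}. Those results look only at $F(r_n)$, whereas the cone at scale $r_n$ depends on $F$ on all of $[0,Dr_n]$.
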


\begin{proof}
From theorem \ref{radsim} , we can assume that $(M,g)$ is a radially symmetric plane where $ds^2 = dr^2 + f(r)^2 d\theta^2$.
It follows from theorems \ref{line} and \ref{rtwo} that $\lim_{r \to \infty} \frac{f(r)}{r} = \infty$.
From L'Hospital's rule it follows that $\lim_{r \to \infty} f'(r)  = \infty$ which implies that for all $t> 0$, there exists a $R_t$ such that for all $r > R_t$, $f'(r) > t$.

Let $R > 0$ be such that $f'(r) \geq 1$ for all $r \geq R$.
We can therefore extend $f_{[R, \infty)}$ to a smooth function $F$ such that $F = f$ on $[R, \infty)$ and $F'(r) > 0$ for all $r \in [0, R]$ which in turn makes $F$ strictly increasing.
Let $(N,h)$ be a Riemannian plane with $h = dr^2 + F(r)^2 d\theta^2$ with its pole at $o$.
Let  $ N' = \{ x \in N | d(x,p) \geq R\}$ and $M' =\{ x \in M | d(x,p) \geq R\}$ and let $(M',d_M)$ and $(N',d_N)$ be the metrics on $M' , N'$ induced from $(M,g)$ and $(N,h)$.
As $F \equiv f$ on $[R, \infty)$, it follows that $M'$ and $N'$ are isometric and therefore $(M,g)$ and $(N,h)$ are quasi-isometric and conformally equivalent.
\end{proof}

If $(M,g)$ is a Riemannian plane exhibiting bounded geometry which has no proper asymptotic cones, then $(M,g)$ is quasi-isometric to a radially symmetric plane with bounded geometry wherein $g = dr^2 +f(r)^2d\theta^2 $ and $f'(r) > 0$ for all $r \geq 0$.

\begin{theorem}
\label{convexballs}
Let $(M,g)$ be a radially symmetric plane with its pole at $p$ and $g = dr^2 + f(r)^2 d\theta^2$ where $f$ is strictly monotone on $[0, \infty)$.
Then the geodesic balls $B(p,r)$ are convex for all $r > 0$.
\end{theorem}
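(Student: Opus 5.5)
We show that for $q_1, q_2 \in B(p,r)$ every minimizing geodesic joining them stays in $B(p,r)$. In polar coordinates about the pole the radial function along any geodesic cannot attain an interior maximum, because the spheres $S(p,s)$ are strictly convex when $f' > 0$. Since $f(0)=0$ and $f(r)>0$ for $r>0$, strict monotonicity means $f$ is strictly increasing. We use $f'(s)>0$ for $s>0$ (the generic case) and handle isolated zeros of $f'$ by a limiting argument.

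The first step is a second-variation computation for $\rho = d(p,\cdot)$, which is smooth on $M\setminus\{p\}$ because $\exp_p$ is a diffeomorphism. For $g = dr^2 + f(r)^2 d\theta^2$ one has
\begin{equation*}
\mathrm{Hess}\,\rho = f(\rho)f'(\rho)\, d\theta^2 = \frac{f'(\rho)}{f(\rho)}\,\bigl(g - d\rho^2\bigr).
\end{equation*}
Equivalently, the geodesic equations give $\ddot r = f(r)f'(r)\dot\theta^2$. Hence along any unit-speed geodesic $c(t) = (r(t),\theta(t))$ avoiding $p$ we get $\ddot r \geq 0$, with equality only when $\dot\theta = 0$ (a radial geodesic) or $f'(r)=0$. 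So $t \mapsto r(t)$ is convex along every geodesic segment. For a segment through $p$, the function $r(t)$ is $|t-t_0|$ near the pole, which is also convex, and convexity is a local property. Therefore $d(p, c(t))$ is a convex function of $t$ along every geodesic.

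Next, take $q_1,q_2 \in B(p,r)$ and a minimizing geodesic $c:[0,L]\to M$ between them (one exists by completeness). Convexity gives $r(c(t)) \le \max(r(q_1), r(q_2)) < r$, so $c \subseteq B(p,r)$, and the same holds for closed balls.

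For the paper's definition we also need the minimizer to be unique whenever both points lie in $B(p,r)$. I expect this to be the main obstacle, since for large $r$ conjugate or cut points may occur; for example, two points at the same radius on opposite sides may be joined by two minimizers going around the pole. The first thing to try is to show that such competing minimizers cannot both avoid $p$. Two distinct minimizers $c_1, c_2$ from $q_1$ to $q_2$ bound a disc $D \subset B(p,r)$, by simple connectivity and the convexity just proved. Apply Gauss--Bonnet to $D$: the interior angles at $q_1,q_2$ are less than $\pi$, the edges are geodesic, so $\int_D K > 0$. Now $K = -f''/f$, and the total curvature of $B(p,s)$ equals $2\pi(1 - f'(s))$. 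With $f$ increasing this is at most $2\pi$, but it can still be positive, so a rotational symmetry argument is needed. By reflecting in the radial line through the midpoint angle, the two minimizers would have to be mirror images. Using Clairaut's relation $f(r)\dot\theta = \text{const}$, monotonicity of $f$ together with convexity of $r(t)$ forces each geodesic to reach its minimum radius exactly once, so $\theta$ is monotone along it. Two such geodesics from $q_1$ to $q_2$ going around opposite sides of $p$ can then be compared by their angular sweeps.

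If uniqueness fails in general, I would fall back on the weaker property actually used later: that $B(p,r)$ contains every minimizer between its points, which the convexity of $r(t)$ already gives.
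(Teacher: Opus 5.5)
Your first two steps are correct, and they are the part of the statement that is actually true. From $\mathrm{Hess}\,\rho=\frac{f'}{f}(g-d\rho^2)$, i.e.\ $\ddot r=f(r)f'(r)\dot\theta^2\ge 0$, the function $t\mapsto d(p,c(t))$ is convex along every geodesic, so every geodesic segment with endpoints in $B(p,r)$ stays in $B(p,r)$. The paper's proof uses the same mechanism. It shows that Jacobi fields along radial geodesics vanishing at $p$ have the form $AtE_1+Bf(t)E_2$, with norm increasing in $t$, which is strict convexity of the distance circles, and then simply asserts convexity. Your Hessian computation makes the passage to containment of geodesics explicit.

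Two small corrections. First, no limiting argument is needed at zeros of $f'$, which for a strictly increasing smooth $f$ need not be isolated: $f'\ge 0$ already gives $\ddot r\ge 0$. Second, Clairaut's integral is $f(r)^2\dot\theta=\mathrm{const}$, not $f(r)\dot\theta$.

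The gap is the uniqueness clause required by the paper's (Chavel's) definition of convexity. No argument can close it, because that part of the statement is false; the paper's proof never addresses it either.

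Take $f(r)=\tanh r$, which is odd with $f'(0)=1$ and $f'>0$, and let $q_1=(s,0)$, $q_2=(s,\pi)$ with $s>\pi/2$. The arc of $S(p,s)$ joining them has length $\pi\tanh s<\pi<2s$, so no minimizer passes through $p$. The reflection $(r,\theta)\mapsto(r,-\theta)$ is an isometry fixing $q_1$ and $q_2$. A minimizer $c$ invariant under it would have radial initial velocity, hence would run along the line $\theta\in\{0,\pi\}$ through $p$, which is impossible. So $c$ and its mirror image are two distinct minimizers. By your own convexity argument both lie in $\overline{B(p,s)}\subset B(p,r)$ for any $r>s$, so $B(p,r)$ fails the paper's definition for every $r>\pi/2$.

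Choosing $f$ equal to $\tanh$ on a long initial interval and superlinear afterwards shows the failure persists even under the paper's standing assumption $f(r)/r\to\infty$. Note that the reflection you invoke in your sketch is exactly what produces the second minimizer.

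So take your fallback. Drop the Gauss--Bonnet/Clairaut paragraph and state the result with ``convex'' meaning that every geodesic segment between points of $B(p,r)$ lies in $B(p,r)$; your argument proves that completely.
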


\begin{proof}
Let $\gamma: [0, \infty) \to M$ be a geodesic ray with $\gamma(0) = p$. 
It suffices to show that any Jacobi field $J$ on $\gamma$ with 
As $(M,g)$ has bounded geometry and $\mathrm{Con}_\omega(M)$ is not proper for any non-principal ultrafilter $\omega$, we note that $\lim_{r \to \infty} \frac{f(r)}{r} =\infty$ .

If $p = (r, \theta) \in M$, then $\mathrm{sec(p)}= - \frac{f''(r)}{f(r)}$ for all $p \in M$ and denote by $K(r)$ the function $-\frac{f''(r)}{f(r)}$.
Now let $\gamma : (-\infty, \infty) \to M$ be a geodesic with $\gamma(0) = p$.
We can define a basis of $T_{\gamma(t)} M$ by letting $E_1 = \frac{\Dot{\gamma}(t)}{g_{\gamma(t)}(\Dot{\gamma}(t), \Dot{\gamma}(t))}$ and $ E_2 \in T_{\gamma(t)}M$ be such that $g_{\gamma(t)}(E_2,E_2) = 1$ and $g_{\gamma(t)}(E_1, E_2) = 0$.

Let $J$ be a Jacobi field along $\gamma$ which is parallel to $\gamma$. 
We note that $J(t) = A \Dot{\gamma}(t) + Bt\Dot{\gamma}(t)$ where $A, B \in \mathbb{R}$.
If $J$ is a Jacobi field which is orthogonal to $\gamma$, then the Jacobi equation reduces to $J'' + K(r)J = 0$ where $K(r) = -\frac{f''(r)}{f(r)}$ is the sectional curvature at $q = (r, \theta)$.

We note that Jacobi fields orthogonal to $\gamma$ are of the form $J(t) = A f(t) + B\frac{f(t)}{\int_0^t f(s)^2ds}$.
Therefore any Jacobi field $J$ along $\gamma$ is of the form $J(t) = (A+Bt)E_1 + (Cf(t) + D \frac{f(t)}{\int_0^t f(s)^2 ds} )$.

It follows that if $J(0) = 0$, then $J = At E_1 + Bf(t) E_2$ for some $A, B \in \mathbb{R}$ and we also note that $||J(t_1) || > || J(t_2) ||$ if and only if $t_1 > t_2$.
Therefore, geodesic balls centred at $p$ are convex.
\end{proof}

From \cite{Goto}, it follows that $\partial_\infty M$ is homeomorphic to $\mathbb{S}^1$ and that for every $\zeta \in \partial_\infty M$, there exists a unique $v \in UT_pM$ such that $\lim_{t \to \infty} b_{v,t} = \zeta$. 
Let $x_n = \exp_p (t_n v_n)$ where $t_n \in [0, \infty)$ and $v_n \in UT_pM $.
We note that $\lim_{n \to \infty}x_n \in \partial_{\infty} M$ if and only if $\lim_{n \to \infty} t_n = \infty$ and $\lim_{n \to \infty} v_n = v \in UT_p M$.

\begin{lemma}
Let $(M,g)$ be a radially symmetric plane with its pole at $p$  and $\lim_{r \to \infty} \frac{f(r)}{r} = \infty$.
Let $\alpha, \beta : [0, \infty) \to M$ be geodesic rays with $\alpha(0) = \beta(0) = p$.
Then $\alpha \prec \beta$ if and only if $\alpha = \beta$.
\end{lemma}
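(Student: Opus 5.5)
We prove the nontrivial direction: if $\alpha \neq \beta$ then $\alpha \not\prec \beta$. The converse is trivial, since $\alpha$ redirects to itself with $\gamma_r = \alpha$.

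\textbf{Setup.} By Lemma \ref{focal} we may assume that $f$ is strictly increasing, and hence (Theorem \ref{convexballs}) that the balls $B(p,r)$ are convex. Write
\begin{equation*}
\alpha(t) = (t,\theta_1), \qquad \beta(t) = (t,\theta_2), \qquad \theta_1 \neq \theta_2 ,
\end{equation*}
and set $\delta = \min(|\theta_1-\theta_2|, 2\pi - |\theta_1-\theta_2|) > 0$. Suppose, for a contradiction, that $\alpha \preceq \beta$ with constants $(q,Q)$. For each $r$ we then have a $(q,Q)$-quasi-geodesic ray $\gamma_r$ that agrees with $\alpha$ on $B(p,r)$ and eventually lies on $\beta$. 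In particular $\gamma_r$ passes through $\alpha(r) = (r,\theta_1)$ at some parameter $t_r$, and we may take $t_r = r$.

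\textbf{Key estimate.} Let $\sigma$ be the sub-path of $\gamma_r$ running from $\alpha(r)$ to its first point on $\beta$. Since $\sigma$ changes the angular coordinate by at least $\delta$, it must do so at some radius. Split into two cases according to the minimal radius $\rho$ attained by the part of $\sigma$ that does the angular travel.
\begin{itemize}
\item If the angular change occurs at radius $\ge \rho$, the length of $\sigma$ is at least $\delta f(\rho)$, because $f$ is increasing and $g \geq f(r)^2 d\theta^2$.
\item Alternatively, $\sigma$ first descends to radius $\rho$, which costs radial length at least $r-\rho$.
\end{itemize}
Hence
\begin{equation*}
l(\sigma) \;\ge\; \min_{\rho\le r}\bigl((r-\rho) + \delta f(\rho)\bigr).
\end{equation*}

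\textbf{Contradiction with the quasi-geodesic inequality.} The difficulty is that $\gamma_r$ may legitimately run out to large radius before crossing over. Let $\sigma$ reach $\beta$ at the point $(s,\theta_2)$, at parameter $t_r + L$ with $L \geq l(\sigma)/q - Q$, or directly $L = l(\sigma)$ if $\gamma_r$ is parametrised by arc length. The quasi-geodesic condition applied to $\gamma_r(0) = p$ and this endpoint gives
\begin{equation*}
s = d\bigl(p,(s,\theta_2)\bigr) \;\ge\; \frac{r + L}{q} - Q .
\end{equation*}
Applied instead to $\alpha(r)$ and the endpoint, using $d\bigl(\alpha(r),(s,\theta_2)\bigr) \le (s-r) + d\bigl((r,\theta_1),(r,\theta_2)\bigr)$, it gives $L \le q\,d + Q$. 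The crucial input is a lower bound for the cross distance of the form
\begin{equation*}
d\bigl((r,\theta_1),(s,\theta_2)\bigr) \;\ge\; (s-r) + c(r), \qquad c(r) \to \infty .
\end{equation*}
Here $c(r) = \min_{\rho}\bigl(2(r-\rho) + \delta f(\rho)\bigr)$, the descend-and-return cost. Because $f(r)/r \to \infty$, $f$ is eventually larger than any linear function, so $c(r)$ is comparable to $2r$ minus lower-order terms; in any case $c(r) \to \infty$ linearly, with slope approaching $2$.

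Feeding this into the quasi-geodesic inequalities for the triple $p$, $\alpha(r)$, endpoint: the path $\gamma_r$ from $p$ to the endpoint has length at least $r + (s-r) + c(r)$, whereas the distance from $p$ to the endpoint is exactly $s$. The excess $r + c(r) \approx 3r$ beyond $s$ is itself not a contradiction, since it is only linear. The contradiction has to come from iterating over scales: repeat the argument at radii $r, 2r, 4r, \ldots$ along $\sigma$, or compare the sub-path through $\alpha(r)$ against the geodesic $[p,(s,\theta_2)]$, which is the radial segment along $\beta$ and never approaches $\alpha(r)$. The quasi-geodesic $\gamma_r$ stays $(q,Q)$-Morse-close to that geodesic only if geodesics are stable, which is not available in general. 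I would therefore argue instead by contradiction via the superlinear angular cost. A $(q,Q)$-quasi-geodesic through $\alpha(r)$ that ends near $\beta(s)$ must cross angle $\delta$ at some radius $\rho \ge r/(2q^2)$, since otherwise its descent would violate the lower quasi-geodesic bound. Crossing at that radius costs $\delta f(r/(2q^2))$, which is $\gg r$ by superlinearity of $f$. Bounding $s$ via the upper quasi-geodesic bound then produces a contradiction for large $r$.

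Making this radius-of-crossing estimate rigorous is the main obstacle. I would attack it first by using the convexity of the balls $B(p,\rho)$ from Theorem \ref{convexballs}, together with the $(q,Q)$ inequalities applied to the points where $\gamma_r$ enters and exits $B(p,\rho)$.
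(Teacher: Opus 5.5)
There is a genuine gap, at exactly the step you flagged as the main obstacle. No contradiction follows from ``crossing the angle $\delta$ costs at least $\delta f(r/(2q^2))\gg r$''. Nothing bounds the time spent on $\sigma$ or the landing radius $s$: the upper quasi-geodesic bound only controls $s$ in terms of that time, and a large crossing cost merely forces both to be large. A quasi-geodesic can also gain angle gradually while moving outward. The total angle available to it after leaving $B(p,r)$ is governed by $\int_r^\infty dt/f(t/q)$, not by the value of $f$ at one radius.

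When that integral diverges, the step cannot be repaired, because the statement itself fails. Take $f(r)=r\log r$ for $r\ge 2$. This is the paper's own recurrent example in the last section; $f$ is strictly increasing and $f(r)/r\to\infty$. Write $\alpha(t)=(t,\theta_1)$, $\beta(t)=(t,\theta_2)$, and for each $r>0$ set
\begin{equation*}
\gamma_r(t)=\begin{cases}
\alpha(t), & 0\le t\le r,\\
\bigl(t,\ \theta_1\pm\int_r^t \frac{ds}{f(s)}\bigr), & r\le t\le T_r,\\
\beta(t), & t\ge T_r,
\end{cases}
\end{equation*}
where $\int_r^{T_r}ds/f(s)=\delta$ and the sign makes the angle run along the shorter arc. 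Such $T_r$ exists because $\int^\infty ds/(s\log s)=\infty$. Then $|\gamma_r'|\le\sqrt2$ and $d(p,\gamma_r(t))=t$, so $|t_1-t_2|\le d(\gamma_r(t_1),\gamma_r(t_2))\le\sqrt2\,|t_1-t_2|$. Thus every $\gamma_r$ is a $(\sqrt2,0)$-quasi-geodesic ray that agrees with $\alpha$ on $B(p,r)$ and coincides with $\beta$ after time $T_r$. Hence $\alpha\preceq\beta$ although $\alpha\neq\beta$. Here $T_r=r^{e^{\delta}}$ for $r\ge2$, so the crossing is as long as your estimate predicts, but harmlessly so.

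The paper's proof rests on the same heuristic and is open to the same objection: it argues that the exterior distance $d_r(\alpha(r),\beta(r))=|\theta|f(r)$ is superlinear, hence $\theta=0$. So the missing step is not available there either.

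What your ``integrate the angular change'' idea does prove is the lemma under the hypothesis $\int^\infty dr/f(r)<\infty$ with $f$ increasing. This is precisely the transience criterion the paper quotes from Grigor'yan. A $(q,Q)$-ray $\gamma_r$ from $p$ lies on $\alpha$ for $t<(r-Q)/q$ and satisfies $d(p,\gamma_r(t))\ge t/q-Q$ for all $t$. Join consecutive points $\gamma_r(n),\gamma_r(n+1)$ by geodesic segments of length at most $q+Q$, and use $f(\rho)\,|d\theta|\le ds$. Then the angular variation of $\gamma_r$ after it leaves $B(p,r)$ is at most $(q+Q)\sum_{n\ge r/q-C}f(n/q-C)^{-1}$ for some $C=C(q,Q)$. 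This tends to $0$ as $r\to\infty$, so for large $r$ the ray can never reach the angle $\theta_2$.
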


\begin{proof}
Without loss of generality, we can assume from lemma \ref{focal} that $f$ is strictly monotone.
Let $\alpha \prec \beta$.
For all $r > 0$, there exists a $(Q,q)$-quasigeodesic ray $\Theta_r$ which coincides with $\alpha$ in $B(p, r)$ and which for some $T_r > 0$ , $\Theta_r(t) \in \beta$ for all $t \geq \beta$.
Denote as $d_r( \alpha(r), \beta(r))$ the length of the shortest path $\gamma \subset \overline{M \setminus B(p,r)}$ which joins $\alpha(r)$ and $\beta(r)$.

We note that from convexity of geodesic balls $B(p,r)$ ( see theorem \ref{convexballs} ) that $d_r(\alpha(r), \beta(r)) = |\theta| f(r)$ where $\cos \theta = g_p ( \exp^{-1}_p (\alpha(1)), \exp_p^{-1}(\beta(1)))$ and $\theta \in [0, \pi]$.

As $f(r)$ is strictly monotone with $\lim_{r \to \infty} \frac{f(r)}{r} = \infty$, it follows that $\theta = 0$.
Therefore, $\alpha = \beta$.
\end{proof}

Fix the pole $p \in (M,g)$ as the base-point with respect to which Busemann functions are defined. 
For each $q \in M, b_q : M \to \mathbb{R}$ such that $b_q(z) = d(z, q) - d(p,q) $.
We note that if $\gamma : [0, \infty) \to M$ with $\gamma(0) = p$, then $\lim_{t \to \infty} b_{\gamma(t)}$ exists and we denote it as $b_\gamma$.

\begin{prop}
\label{idle}
Let $(M,g)$ be a radially symmetric plane with its pole at $p$ and let $g = dr^2 + f(r)^2 d\theta^2$ where $\lim_{r \to \infty} \frac{f(r)}{r} = \infty$.
Let $\alpha : [0, \infty) \to M$ be a $(Q,q)-$quasi-geodesic ray.
There is a geodesic ray $\gamma : [0, \infty) \to M$ with $\gamma(0) = p$ such that $\lim_{t \to \infty} b_{\alpha(t)} = b_\gamma$ and that $\lim_{r \to \infty} \frac{d(\alpha(r), \gamma(r))}{r} = 0$.
\end{prop}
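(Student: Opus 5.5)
Write $\alpha(t) = (\rho(t), \theta(t))$ in polar coordinates about the pole. By Lemma \ref{focal} we may assume $f$ is strictly increasing. Theorem \ref{convexballs} then makes every ball $B(p,r)$ convex, and for $r$ large we have $f(r) \geq r$.

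The plan has three steps:
\begin{itemize}
\item show that the angular coordinate $\theta(t)$ converges in $UT_pM \simeq \mathbb{S}^1$ to some $\theta_\infty$;
\item take $\gamma$ to be the radial geodesic $\gamma(t) = \exp_p(t\theta_\infty)$ and check the sublinear tracking estimate;
\item deduce the convergence of the Busemann functions from the description of $\partial_\infty M$ quoted from \cite{Goto}.
\end{itemize}

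\textbf{Radial growth.} Since $\alpha$ is a $(Q,q)$-quasi-geodesic ray, $\rho(t) = d(p,\alpha(t))$ satisfies
\begin{equation*}
\frac{t}{Q} - C \leq \rho(t) \leq Qt + C .
\end{equation*}
Moreover, on each segment $[t,t+1]$ the path $\alpha$ has length at most $Q+q$; we first replace $\alpha$ by a piecewise geodesic path at bounded distance, which does not affect either conclusion. Any such sub-path joining points of radius $\geq R$ that changes angle by $\Delta\theta$ must have length at least $\min\{ \Delta\theta \, f(R'), \ \text{radial excursion} \}$. This holds because $g \geq dr^2 + f(r)^2 d\theta^2$ with $f$ increasing: a path that stays outside $B(p,R')$ pays $f(R')$ per radian.

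\textbf{Convergence of the angle.} The main obstacle is to bound $\sum_k |\theta(t_{k+1}) - \theta(t_k)|$ along dyadic times $t_k = 2^k$. On $[t_k, t_{k+1}]$ the quasi-geodesic stays outside $B(p, t_k/(2Q))$, except possibly for a bounded number of excursions. Such excursions are excluded for large $k$ by the lower bound on $\rho$ applied to $t \geq t_k$, which gives $\rho(t) \geq t_k/Q - C$ there. The length of this piece is at most $(Q+q)\,t_k$, so
\begin{equation*}
|\Delta\theta_k| \leq \frac{(Q+q)\,t_k}{f\!\left(t_k/(2Q)\right)} .
\end{equation*}
Because $f(r)/r \to \infty$, this gives $|\Delta\theta_k| \to 0$, but not obviously summability. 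I would first try to bypass summability. Suppose instead that the set of limit directions contains an arc of positive length. Then $\alpha$ revisits directions $\theta_1 \neq \theta_2$ at arbitrarily large radii $R$, and the paths in between have length comparable to $f(R/(2Q))\,|\theta_1 - \theta_2| \gg R$. This is impossible, since $d(\alpha(s),\alpha(s')) \geq |s-s'|/Q - q$ together with the growth bound on $\rho$ caps the intervening time at $O(R)$ (uniform linear bounds). Hence the limit set of $\theta(t)$ is a single point $\theta_\infty$.

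\textbf{Tracking and Busemann limit.} By the quantitative form of the bound above, the angular deviation satisfies $|\theta(r) - \theta_\infty| \leq \varepsilon(r)$ with $\varepsilon(r) f(r) = O(r)$. Converting this to distance, and combining with $|\rho(r) - r| = O(r)$, already gives a bound of the form $d(\alpha(r),\gamma(r)) = O(r)$. To obtain $o(r)$, I would reparametrize by radius, comparing $\alpha$ at the time when $\rho = r$ with $\gamma(r)$. The distance is then bounded by the angular term alone, which is at most $\varepsilon(r) f(r)$, and this tends to zero relative to $r$ because the dyadic increments beyond radius $r$ are $o(r/f(r))$ by the same estimate. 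Finally, $\alpha(t) = \exp_p(\rho(t)v(t))$ with $\rho(t) \to \infty$ and $v(t) \to \theta_\infty$, so the characterization of convergence to $\partial_\infty M$ from \cite{Goto} gives $b_{\alpha(t)} \to b_\gamma$.
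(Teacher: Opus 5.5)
\textbf{There is a genuine gap in your ``Convergence of the angle'' step, and it cannot be closed.} Your per-scale bound $|\Delta\theta_k|\lesssim t_k/f(t_k/(2Q))$ is fine. The argument meant to bypass summability is not. If $\alpha$ passes direction $\theta_1$ at radius $R$ and later direction $\theta_2$, nothing forces the second passage to occur at a radius comparable to $R$. It can occur at radius $R'\gg R$. The elapsed time is then $O(R')$ rather than $O(R)$, and the turning is spread over about $\log_2(R'/R)$ dyadic scales, each of which need only contribute $O(t_k/f(t_k))$ of angle. So your estimate bounds the total winding only when $\sum_k 2^k/f(2^k)<\infty$, i.e.\ $\int^\infty dr/f(r)<\infty$ for increasing $f$.

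Here is a counterexample. Take $f(r)=r\log r$ for large $r$; this satisfies the hypothesis and is the paper's own recurrent example. Let $\alpha(t)=(t,\,c\log\log t)$. Then $|\dot\alpha|=\sqrt{1+c^2}$ and $t-s\le d(\alpha(s),\alpha(t))\le\sqrt{1+c^2}\,(t-s)$, where the lower bound holds because $d(p,\cdot)$ is the radial coordinate. So $\alpha$ is a bi-Lipschitz ray whose angle winds around infinitely often. Consequently:
\begin{itemize}
\item no ray from $p$ is the limit of $[p,\alpha(t)]$;
\item $b_{\alpha(t)}$ has no limit, by the description of $\partial_\infty M$ quoted in the paper;
\item for every ray $\gamma$ from $p$ there are arbitrarily large $r$ with $d(\alpha(r),\gamma(r))\ge r$. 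A path joining two points of radius $r$ at angular distance $\pi$ either dips below radius $r/2$, costing at least $r$, or has length at least $\pi f(r/2)\ge r$.
\end{itemize}

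\textbf{Your tracking step fails independently, even when the angle converges.} You rightly reparametrize by radius; with the time parameter the claim already fails for $\alpha(t)=\gamma(2t)$. However, the tail of your dyadic bound beyond radius $r$ has first term of order $r/f(r)$. It therefore yields only $\varepsilon(r)f(r)=O(r)$, and your claim that these increments are $o(r/f(r))$ does not follow. The $O(r)$ bound is sharp. For $f(r)=r^2$ (for $r\ge1$), $\alpha(t)=(t,-c/t)$ is again a $\sqrt{1+c^2}$-bi-Lipschitz ray, with limit direction $0$. Any path from $(r,-c/r)$ to $(r,0)$ either dips below radius $r/2$ or has length at least $f(r/2)\cdot c/r=cr/4$. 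Hence $d(\alpha(r),\gamma(r))\ge\min(1,c/4)\,r$ for the ray $\gamma$ in direction $0$, and rays in other directions are eventually at distance $\ge r$. The same happens for every $f(r)=r^k$ with $k>1$.

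So the proposition is false as stated, and no completion of your outline exists. The paper's proof does not supply the missing idea either. It simply asserts a unique limiting ray ``as $f(r)/r\to\infty$''. It then applies a Busemann limit, valid for fixed $z$, with $z=\gamma(r)$ moving off to infinity.

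What your argument does establish is a weaker statement under the extra hypothesis $\int^\infty dr/f(r)<\infty$ with $f$ increasing. Under that hypothesis the angle converges, and then, granting the quoted description of $\partial_\infty M$, $b_{\alpha(t)}\to b_\gamma$. Sublinear tracking would need a much stronger hypothesis on $f$.
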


\begin{proof}
If $\alpha(0) \neq p$, we can join $p$ and $\alpha(0)$ by a geodesic segment to obtain a quasi-geodesic segment $\beta$ such that $\lim_{t \to \infty} b_{\alpha(t)} = \lim_{t \to \infty} b_{\beta(t)}$ which allows us to assume without losing any generality that $\alpha(0) = p$.
As  $\lim_{r \to \infty} \frac{f(r)}{r} = \infty$, we see that there is a unique geodesic ray $\gamma$ such that $\lim_{t \to \infty}[p, \alpha(t)] = \gamma$ .
As $\alpha : [0, \infty) \to M$ is a $(q,Q)-$quasi-geodesic, we get that for all $r \geq 0$, $d(\alpha(t), p) \geq R$ whenever $t \geq Q(r+ q)$.

It follows that $\lim_{t \to \infty} b_{\alpha(t)} = b_{\gamma}$.
As $\lim_{t \to \infty} b_{\alpha(t)} = b_\gamma$, we get that $\lim_{t \to \infty} d(\alpha(t),z) -d(\alpha(t), p) -d(\gamma(t),z) + d(\gamma(t), p) = 0$.
Thus $\lim_{t \to \infty} d(\alpha(t),z) - d(\alpha(t), p) - d(\gamma(t),z) + t = 0$ for all $z \in M$.

Let $t_r = \inf \{ t > 0 | d(\alpha(t), p) = r\}$. 
As $\alpha$ is a $(q,Q)$-quasigeodesic, it follows that $t_r \in [\frac{r-Q}{q},q(r+Q)]$ for all $r > 0$.

For a given $k \geq 0$, it follows that for all $\epsilon > 0$, there exists $R_\epsilon > 0$ such that 
\begin{equation*}
  | d(\gamma(r), \alpha(t_{(1+k)r})) - kr| < \epsilon 
\end{equation*}
for all $r > R_\epsilon$.

This guarantees the existence of a path $\eta$ joining $\alpha(t_{(1+k)r})$ and $\gamma(r)$ such that $kr-\epsilon \leq l(\eta) \leq kr + \epsilon$.

It follows that for any $z \in \eta$, $d(z, p) \geq | d(\gamma(r),p) - d(\gamma(r), z)| \geq r - kr - \epsilon$.
As $\alpha$ is a $(q,Q)-$quasigeodesic, it follows that 
\begin{equation*}
\frac{1}{q} (t_{(1+k)r} -t_r) -Q \leq d(\alpha(t_{r}), \alpha(t_{(1+k)r)}) \leq q(t_{(1+k)r} - t_r) + Q
\end{equation*}
As $ \frac{r - Q}{q} \leq t_r \leq q(r+Q)$, it follows that
\begin{equation*}
\frac{kr}{q^2} - Q \leq d(\alpha(t_r), \alpha(t_{(1+ k)r}) \leq q^2 kr + Q
\end{equation*}

It follows that there exists a path $\beta$ joining $\alpha(t_r)$ and $\gamma(r)$ such that for all $z \in \beta $, $d(z, p) \geq r - \epsilon$ and $l(\beta) \leq (q^2+1) kr + Q + \epsilon$.
By denoting $\beta_r$ as the shortest path in $\overline{M \setminus B(p,r)}$, we get that $\lim_{r \to \infty} \frac{\beta_\gamma}{r} = 0$ which forces $\lim_{r \to \infty} \frac{d(\alpha(t_r), \gamma(r))}{r} = 0$.

It follows from lemma 6.4 of \cite{QingRafi} that $\alpha$ $(9q,Q)-$redirects to $\gamma$ and $\gamma$ $(9q,Q)$-redirects to $\alpha$.
\end{proof}

\begin{cor}
Let $(M,g)$ be a radially symmetric plane with bounded geometry which has its pole at $p\in M$.
Let $\alpha : [0, \infty) \to M$ be a quasi-geodesic ray with $\alpha(t) = p$ if and only if $t= 0$.
If $\{ \frac{\exp^{-1}_p(\alpha(t))}{d(\alpha(t),p)} \}_{t > 0}$ has more than one limit point, then $(M,g)$ is monodirectional.
\end{cor}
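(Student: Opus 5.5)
The plan is to argue by dichotomy on the growth of $f$, using the two regimes the paper has already separated. One regime is when $\mathrm{Con}_\omega(M)$ is proper for some non-principal ultrafilter $\omega$. The other is when no asymptotic cone is proper, which by Lemma \ref{focal} and the argument there means $\lim_{r \to \infty} \frac{f(r)}{r} = \infty$. In the first regime Lemma \ref{mono} already gives that $(M,g)$ is monodirectional, so there is nothing to prove. So I would assume $\lim_{r\to\infty} f(r)/r = \infty$ and show that the hypothesis (a quasi-geodesic ray $\alpha$ whose direction $\exp_p^{-1}(\alpha(t))/d(\alpha(t),p)$ has two distinct limit points $v \neq w$ in $UT_pM$) is impossible. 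Monodirectionality then holds because the remaining case is vacuous.

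To rule out the growth case, I would first use Lemma \ref{focal} to take $f$ strictly increasing, and Theorem \ref{convexballs} to get convex balls $B(p,r)$. The key metric estimate is the one used in the lemma before Proposition \ref{idle}. Two points at radius $r$ with angular separation $\theta$ can only be joined outside $B(p,r)$ by a path of length at least about $\theta f(r)$. Any path from $(r,\cdot)$ dipping to radius $s<r$ pays at least $2(r-s)$ in radial length plus $\theta f(s)$ in angular length.

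Next, pick times $t_n \to \infty$ with direction near $v$ and $s_n > t_n$ with direction near $w$, interleaved so that $t_n < s_n < t_{n+1}$. By continuity, between $t_n$ and $s_n$ the direction of $\alpha$ sweeps an arc of angle at least $\theta_0 = \tfrac12\angle(v,w) > 0$. The quasi-geodesic inequality gives
\[
d(\alpha(t),p) \geq \frac{t}{q} - Q \quad\text{and}\quad d(\alpha(t),p) \leq q t + Q .
\]
So along $[t_n,s_n]$ the radius is at least $t_n/q - Q$, and the length of $\alpha|_{[t_n,s_n]}$ is at least $\theta_0 f(t_n/q - Q)$. Quasi-geodesicity also bounds this length by roughly $q(s_n - t_n) + Q$. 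For the sweep to be possible at all, $s_n$ must be at least about $\theta_0 f(t_n/q-Q)/q$, which is superlinear in $t_n$. I would then compare with the endpoint distance: $d(\alpha(t_n),\alpha(s_n))$ is at least roughly $(s_n - t_n)/q - Q$. By convexity it is also at most $d(\alpha(t_n),p)+d(p,\alpha(s_n))$, which is at most about $q(t_n+s_n)+2Q$, so this alone gives no contradiction.

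The real obstacle is therefore to exploit the angular sweep more sharply. I would show that a quasi-geodesic sub-path that sweeps angle $\theta_0$ while staying at radius at least $\rho$ is forced to be much longer than the geodesic between its endpoints through $p$. The geodesic detour through smaller radii costs only about twice the radius, while the sweep costs $\theta_0 f(\rho)$. Concretely, I would choose the sub-path between the first time radius $\rho_n$ is reached after $t_n$ and the moment the direction has moved by $\theta_0$. Its endpoints are at radius comparable to $\rho_n$ (up to $q$), and their distance is at most $2q\rho_n + 2Q$ via $p$. Its length is at least $\theta_0 f(\rho_n/q - Q)$, and $\theta_0 f(\rho_n/q - Q)/\rho_n \to \infty$. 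This contradicts the $(q,Q)$ quasi-geodesic upper bound
\[
\ell \lesssim q\, d(\text{endpoints}) + Q .
\]
The delicate point I expect to struggle with is controlling the radius from above during the sweep, since quasi-geodesics may wander outward. I would handle it by picking the sweep sub-interval whose radial extent is minimal and applying the estimate at its maximal radius, using that $f$ is increasing.
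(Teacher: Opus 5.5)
Your skeleton matches the paper's. If some asymptotic cone is proper, Lemma~\ref{mono} finishes. Otherwise $f(r)/r\to\infty$, and one argues that no quasi-geodesic ray can have two limiting directions, so the hypothesis is vacuous. The paper takes this last fact from Proposition~\ref{idle}; you try to prove it directly. \textbf{That claim is false, so the growth case cannot be handled by contradiction.}

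Here is a counterexample. Take $f(r)=r\log r$ for large $r$; this is the paper's own bounded-geometry example with $f(r)/r\to\infty$. Let $\alpha(t)=(t,\phi(t))$ in polar coordinates, where $0\le f\phi'\le 1$, $\phi'=0$ near $0$, and $\phi'=1/f$ for large $t$. Then $d(p,\alpha(t))=t$ and $|\alpha'(t)|=\sqrt{1+f(t)^2\phi'(t)^2}\le\sqrt2$. Hence $|s-t|\le d(\alpha(s),\alpha(t))\le\sqrt2\,|s-t|$, so $\alpha$ is a $(\sqrt2,0)$-quasi-geodesic ray meeting $p$ only at $t=0$. But $\phi(t)=\log\log t+C\to\infty$, so every point of $UT_pM$ is a limit direction. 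This also contradicts Proposition~\ref{idle}: for every radial ray $\gamma$ one has $d(\alpha(r),\gamma(r))\ge cr$ along a sequence $r\to\infty$. So citing that proposition, as the paper's two-line proof does, does not close the gap either.

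In your estimate, the step that fails is the assertion that the endpoints of the sweeping sub-path have radius comparable to $\rho_n$. In the example, turning through a fixed angle $\theta_0$ starting from radius $\rho$ takes you out to radius $\rho^{e^{\theta_0}}$. The sub-path length, about $\sqrt2(\rho^{e^{\theta_0}}-\rho)$, is then comparable to the endpoint distance, so there is no contradiction.

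Choosing the sub-interval differently does not help, because the lower bound $\theta f(\cdot)$ must be taken at the \emph{minimal} radius of the sub-path. The best your method yields is obtained by cutting a tamed ray into dyadic time intervals $[2^k,2^{k+1}]$. On each one the angular sweep is $O\bigl(2^k/f(2^k/q-Q)\bigr)$. This tends to $0$, but for increasing $f$ it is summable only when $\int^\infty dr/f(r)<\infty$, that is, in the transient case; there the hypothesis is indeed vacuous.

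When $f(r)/r\to\infty$ but $\int^\infty dr/f(r)=\infty$ (as for $r\log r$), the hypothesis is attainable, and monodirectionality must be proved directly. The missing idea is the same spiral. Radially monotone arcs with $\theta'(r)=\pm1/f(r)$ are $(\sqrt2,0)$-quasi-geodesics that can turn through any angle. Splicing them between radial rays therefore redirects any radial ray from $p$ to any other with uniform constants. After that you must still show that every quasi-geodesic ray is equivalent to a radial one.
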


\begin{proof}
Proposition \ref{idle} ensures that $g= dr^2 + f(r)^2 d\theta^2$ where $\lim_{r \to \infty} \frac{f(r)}{r} = \infty$.
It follows that $\liminf_{r \to \infty} \frac{f(r)}{r} < \infty$ which ensures ( due to lemma \ref{mono}) that $(M,g)$ is monodirectional.
\end{proof}

\begin{theorem}
Let $(M,g)$ be a Riemannian plane whose every asymptotic cone is not proper.
Then $(M,g)$ satisfies  the quasi-redirecting assumptions 0-3.
\end{theorem}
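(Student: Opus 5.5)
First I will reduce to a model space. Quasi-redirection and the Assumptions are quasi-isometry invariant (Lemma \ref{homeo} and theorem 5.9 of \cite{QingRafi}). By Theorem \ref{radsim} and Lemma \ref{focal}, I may therefore assume that $(M,g)$ is radially symmetric with pole $p$, $g = dr^2 + f(r)^2 d\theta^2$, and $f$ strictly increasing. Since no asymptotic cone is proper, Lemma \ref{line} and Theorem \ref{rtwo} force $\lim_{r\to\infty} f(r)/r = \infty$. Theorem \ref{convexballs} then gives that every ball $B(p,r)$ is convex. The whole plan is to check each assumption in this model by playing the radial geodesics $\gamma_v(t)=\exp_p(tv)$, $v\in UT_pM$, against Proposition \ref{idle}.

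\textbf{Assumption 0.} $M$ is proper and geodesic because it is complete. Every $x\in M$ lies on a geodesic ray: take the radial ray through $x$, or any radial ray if $x=p$. So $(q_0,Q_0)=(1,0)$ works.

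\textbf{Assumption 1.} Let $\alpha$ be a $(q,Q)$-ray. Proposition \ref{idle} gives a radial geodesic $\gamma$ with $d(\alpha(r),\gamma(r))/r\to 0$, and its proof (via lemma 6.4 of \cite{QingRafi}) shows that $\alpha$ and $\gamma$ $(9q,Q)$-redirect to each other. Hence every class in $P(M)$ contains a $(1,0)$-ray $\gamma_v$, and I take $\underline{a}=\gamma_v$ as the central element. I also need uniqueness of $v$. By the preceding lemma, two radial rays satisfy $\gamma_v\prec\gamma_w$ only if $v=w$, so distinct $v$ give distinct classes. This identifies $P(M)$ with $UT_pM\cong\mathbb{S}^1$.

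\textbf{Assumption 2.} I set $f_{\mathfrak a}(q,Q)=(9q,Q)$, with the same function for every class. If $\beta\prec\underline{a}=\gamma_v$, then Proposition \ref{idle} attaches to $\beta$ a radial ray $\gamma_w$ with $\beta\sim\gamma_w$ at constants $(9q,Q)$. Transitivity gives $\gamma_w\prec\gamma_v$, and the lemma then forces $w=v$. So $\beta$ redirects to $\underline{a}$ with the uniform constants $(9q,Q)$.

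\textbf{Assumption 3.} I will show symmetry. Suppose $\alpha\prec\beta$, and let $\gamma_v,\gamma_w$ be the radial rays assigned to $\alpha,\beta$ by Proposition \ref{idle}. Then $\gamma_v\prec\alpha\prec\beta\prec\gamma_w$, so $v=w$ by the lemma. Proposition \ref{idle} then gives $\beta\prec\gamma_v\prec\alpha$, hence $\beta\prec\alpha$.

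\textbf{Main obstacle.} The step I expect to be hardest is making the concatenation of redirections quantitative. Assumption 2 needs uniform constants, and composing two redirections could a priori degrade them. I would handle this by avoiding composition altogether: the only constants that matter are those in Proposition \ref{idle} from an arbitrary $(q,Q)$-ray directly to its radial ray. Those depend only on $(q,Q)$, so the uniform bound $(9q,Q)$ suffices.

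A second delicate point is the lemma "$\alpha\prec\beta$ iff $\alpha=\beta$" for radial rays. It relies on the fact that a redirecting quasi-geodesic must leave $B(p,r)$ and travel around at cost at least $|\theta| f(r)$, which convexity of balls guarantees. Since $|\theta| f(r)/r\to\infty$, this contradicts the linear control of a $(q,Q)$-quasi-geodesic. I would verify this cost estimate carefully, and that it holds uniformly in the constants.
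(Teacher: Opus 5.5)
Your route is the paper's own. You reduce via Theorem~\ref{radsim} and Lemma~\ref{focal} to a radial model with $f$ increasing and $f(r)/r\to\infty$, then read Assumptions 1--3 off Proposition~\ref{idle} and the lemma just before it (radial rays satisfy $\alpha\prec\beta$ only if $\alpha=\beta$). Checking Assumption~1 directly, and fixing $\underline{a}$ before defining $f_{\mathfrak a}$, is tidier than the paper's appeal to Lemma~7.1 of Qing--Rafi.

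The genuine gap is that both inputs are false when all you know is $f(r)/r\to\infty$. The cost estimate you flagged is exactly where it breaks. $|\theta|f(r)$ is the length of the shortest path outside $B(p,r)$ from $\gamma_v(r)$ to $\gamma_w(r)$. But a redirecting ray may land on $\gamma_w$ at a radius $s\gg r$, and quasi-geodesicity only bounds its length by a multiple of $r+s$, so nothing is contradicted. Two further points: Lemma~\ref{homeo} presupposes Assumptions 0--2 on both spaces, so it cannot carry them across the quasi-isometry. And Theorem~\ref{radsim} needs bounded geometry, which the statement does not assume.

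\textbf{A counterexample inside your reduced model.} Take $f(r)=r\log r$ for $r\ge 2$, the paper's own recurrent example with no proper asymptotic cone; it is increasing with $f(r)/r\to\infty$.
\begin{itemize}
\item Since $\frac{d}{d\rho}\log\log\rho=1/f(\rho)$, the spiral $\rho\mapsto(\rho,\theta_v+\log\log\rho-\log\log r)$, $\rho\ge r$, has length $\sqrt2$ per unit of radius and turns through unbounded angle.
\item Follow $\gamma_v$ to radius $r$, then this spiral until it reaches direction $w$ at some radius $R$, then $\gamma_w$. Along this curve the radius is nondecreasing and arc length is at most $\sqrt2$ times the radial increment. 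Since $d(x,y)\ge|d(p,x)-d(p,y)|$, it is a $(\sqrt2,0)$-quasi-geodesic ray agreeing with $\gamma_v$ on $B(p,r)$ and ending on $\gamma_w$.
\item Hence $\gamma_v\preceq\gamma_w$ for all $v,w$ with uniform constants. The lemma, and your identification $P(M)\cong UT_pM$, fail.
\end{itemize}

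\textbf{Proposition~\ref{idle} fails as well.} The infinite spiral $\rho\mapsto(\rho,\log\log\rho)$ is a $(\sqrt2,0)$-ray with no limiting direction. Take points at radii $\rho_1,\rho_2$ separated by a fixed angle $\delta$. Any path between them either dips to radius $\varepsilon(\rho_1+\rho_2)$ or pays at least $\delta f(\varepsilon(\rho_1+\rho_2))$. So such points are at distance close to $\rho_1+\rho_2$, and no radial ray stays within $o(r)$ of the spiral at radius $r$. That removes the source of your $(9q,Q)$ constants in Assumption~2.

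Even in the transient case the sublinear-divergence claim fails. For $f(\rho)=\rho^2$ ($\rho\ge1$), the ray $\rho\mapsto(\rho,-1/\rho)$ is a $(\sqrt2,0)$-ray converging in angle to $0$, yet it stays at distance $\gtrsim\rho$ from $\gamma_0$.

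\textbf{What survives.} Your uniqueness step holds under the stronger hypothesis $\int^\infty d\rho/f(\rho)<\infty$, which is transience of the model. For a tamed redirection $c$ from $p$ you have $|\dot\theta|\le|\dot c|/f(\rho)$ and $\rho(c(t))\ge t/q-Q$. So the angle it turns after radius $r$ is $O_{q,Q}\bigl(\int_{r/q-Q}^\infty d\rho/f(\rho)\bigr)\to0$. When that integral diverges, all radial rays fall into one class. Then both the structure of $P(M)$ and the uniform constants in Assumption~2 need an argument independent of Proposition~\ref{idle}.
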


\begin{proof}
From theorem \ref{radsim}, we can assume that $(M,g)$ is a radially symmetric plane with its pole at $p$ and from lemma \ref{focal}, we can assume that $g = dr^2 + f(r)^2 d\theta^2$ where $\lim_{r \to \infty} \frac{f(r)}{r} = \infty$ and $f$ is strictly monotone.
As $(M,g)$ is a complete Riemannian manifold, it satisfies Assumption 0.

Proposition \ref{idle} ensures that quasi-redirection is symmetric.
Therefore,  $(M,g)$ satisfies Assumption 3.

For Assumption 2, let $\alpha :[0, \infty) \to M$ be a $(q,Q)$-quasigeodesic ray and let $\mathrm{a}$ be the geodesic ray with $\mathrm{a}(0)= p$ such that $\mathrm{a} \sim \alpha$.
As $\alpha$ diverges from $\mathrm{a}$ sublinearly, it follows that $\alpha$ $(9q, Q)$-redirects to $\mathrm{a}$. 

As $(M,g)$ satisfies assumptions 0,2 and 3, lemma 7.1 of \cite{QingRafi} ensures that $(M,g)$ satisfies Assumption 1.

Therefore $(M,g)$ satisfies the QR assumptions 0-3.
\end{proof}

Therefore, we can topologise $M = \overline{M} \cup \partial M$ as in \cite{QingRafi}.

Before showing that $\overline{M}$ is homeomorphic to $\mathbb{D}^2$ and $\partial M$ is homeomorphic to $\mathbb{S}^2$, we shall state the definition of $\pi-$bases which is a generalisation of the notion of a base of a topological space.

\begin{defn}\cite{Hodel}( page 16)
Let $X$ be a topological space. 
A collection $\mathcal{P}$ of open sets is said to be a $\pi-$base for $X$ if every non-empty open subset $U \subseteq X$ contains at least one member of $\mathcal{P}$.
\end{defn}

\begin{lemma}
\label{compactness}
Let $(M,g)$ be a Riemannian plane such that every asymptotic cone of $(M,g)$ is not proper.
$\overline{M} = $ is compact.
\end{lemma}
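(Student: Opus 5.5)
We will prove that $\overline{M}$ is sequentially compact, which suffices provided $\overline{M}$ is first countable (or metrisable). First we reduce to a model. By theorem \ref{radsim} and lemma \ref{focal}, $(M,g)$ is quasi-isometric to a radially symmetric plane with pole $p$, metric $dr^2+f(r)^2d\theta^2$, $f$ strictly increasing and $\lim_{r\to\infty} f(r)/r=\infty$. By the preceding theorem both spaces satisfy Assumptions 0--3, so lemma \ref{homeo} identifies their quasi-redirecting compactifications. It is therefore enough to treat the radially symmetric model.

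In the model, the preceding lemma and proposition \ref{idle} show that every class in $\partial M$ contains exactly one geodesic ray from $p$. This gives a bijection $\Phi:UT_pM\simeq\mathbb{S}^1\to\partial M$, $v\mapsto[\exp_p(tv)]$. We will use this bijection together with polar coordinates $x=\exp_p(tv)$ on $M$.

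Let $(x_n)$ be a sequence in $\overline{M}$. We split into three cases.
\begin{itemize}
\item If infinitely many $x_n$ lie in $\partial M$, write them as $\Phi(v_n)$. By compactness of $\mathbb{S}^1$ we may pass to $v_n\to v$.
\item If infinitely many $x_n$ lie in a bounded subset of $M$, properness of $M$ gives a convergent subsequence.
\item Otherwise $x_n=\exp_p(t_nv_n)$ with $t_n\to\infty$, and we may again pass to $v_n\to v$.
\end{itemize}

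The main step is the claim that in the first and third cases $x_n\to\Phi(v)$ in the Qing--Rafi topology. Concretely, for each $r$ we must show that $x_n\in\mathcal{U}(\Phi(v),r)$ for $n$ large. That is, every $(q,Q)$-ray in the class of $x_n$ (respectively, every quasi-geodesic ray through $x_n$) must $F((q,Q))$-redirect to the central ray $\gamma_v$ at radius $r$. We construct the redirection explicitly:
\begin{itemize}
\item follow the ray (or $[p,x_n]$) out to radius $R\gg r$;
\item cross along the circle $S(p,R)$ by an arc of length $|\theta_n|f(R)$, where $\theta_n$ is the angle between $v_n$ and $v$;
\item then follow $\gamma_v$ outward.
\end{itemize}
Since $\theta_n\to0$, we can choose $R=R_n$ with $|\theta_n|f(R_n)\le R_n$. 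The concatenation is then a $(3,0)$-type quasi-geodesic, and it agrees with the original ray on $B(p,r)$ once $R_n>r$. For a general $(q,Q)$-ray we first use proposition \ref{idle}: such a ray stays sublinearly close to its geodesic representative $\gamma_{v_n}$, and lemma 6.4 of \cite{QingRafi} then gives uniform constants $(9q,Q)$ before we perform the crossing.

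The obstacle is making the constants uniform in $n$. The convexity of $B(p,R)$ from theorem \ref{convexballs}, together with the monotonicity of $f$, controls lengths of paths outside the balls. This is what keeps the concatenated path quasi-geodesic: any shortcut would have to dip inside $B(p,R)$, and doing so costs a definite fraction of $R$.

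Finally, for first countability: the sets $\mathcal{U}(\Phi(v),n)$, $n\in\mathbb{N}$, form a countable neighbourhood base at boundary points by construction, and $M$ is metrisable. Hence $\overline{M}$ is first countable, sequential compactness implies compactness, and we conclude that $\overline{M}$ is compact.
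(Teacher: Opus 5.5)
\textbf{The final deduction is the real gap.} First countability plus sequential compactness does \emph{not} imply compactness. The ordinal space $[0,\omega_1)$ with the order topology is Hausdorff, first countable and sequentially compact, yet the open cover $\{[0,\alpha):\alpha<\omega_1\}$ has no finite subcover. You need a Lindel\"of, second-countability or metrisability input, and your proposal supplies none.

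This is exactly where the paper spends its effort. After getting sequential compactness from the compactness of $UT_pM$ (as you do), it shows that $\partial M$ is Lindel\"of by comparing its topology with a coarser second countable one built from a countable base of $M$. It then notes that $M$ is second countable and concludes from ``sequentially compact and Lindel\"of implies compact''.

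Within your approach there is a more direct repair. Your main claim, argued by contradiction, gives for every $v\in\mathbb{S}^1$ and $r>0$ numbers $\epsilon,R>0$ with $\{\Phi(w):|w-v|<\epsilon\}\cup\{\exp_p(tw):|w-v|<\epsilon,\ t>R\}\subseteq\mathcal{U}(\Phi(v),r)$. Given an open cover of $\overline{M}$, compactness of $\mathbb{S}^1$ then yields finitely many members whose union contains $\partial M$ and $M\setminus\overline{B}(p,R_0)$. The compact ball $\overline{B}(p,R_0)$ needs only finitely many more, so no countability axiom is required.

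\textbf{The main step also needs repair.} The paper merely asserts this convergence, so you are attempting more than it does, but three problems remain.

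First, a crossing arc of length up to $R_n$ does not give a uniform quasi-geodesic, and convexity of balls does not rule out shortcuts. In $\mathbb{H}^2$, which satisfies all the hypotheses, two points of $S(p,R)$ joined by an arc of length $\theta\sinh R=R$ are at distance $d$ with $\cosh d=1+\sinh^2R\,(1-\cos\theta)\le 1+R^2/2$. So $d\le 2\log R+O(1)$: dipping to depth about $\log R$ is cheap. Instead, fix $R>r$ and take $n$ so large that $|\theta_n|f(R)\le 1$; then the arc costs only an additive constant.

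Second, this handles only $\gamma_{v_n}$ or $[p,x_n]$. For an arbitrary $(q,Q)$-ray in $\Phi(v_n)$ (or through $x_n$), Lemma 6.4 of Qing--Rafi puts you on $\gamma_{v_n}$ at a radius that depends on the ray. The crossing there costs $|\theta_n|f(\cdot)$ at an uncontrolled radius. You still need a bound, uniform over all $(q,Q)$-rays, on their angular position near radius $r$.

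Third, the bijection $\Phi$ rests on Proposition \ref{idle} and on the lemma that distinct radial rays from $p$ do not redirect to each other. Both fail for the paper's own example $f(r)=r\log r$. The curve with polar coordinates $(t,\theta_0+c\log\log t)$ has radial speed $1$ and speed $\sqrt{1+c^2}$. It is therefore a $(\sqrt{1+c^2},0)$-quasi-geodesic ray whose direction never converges. The same spiralling, available because $\int^\infty dr/f=\infty$, redirects any radial ray to any other with constants independent of $r$. So the identification $\partial M\cong\mathbb{S}^1$ that both your argument and the paper's rely on is not available in general.
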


\begin{proof}
Without loss of generality, we can assume that $\limsup_{n \to \infty}d(v_n, p) = \infty$ as otherwise $(v_n) \subseteq B(p,R)$ for a large enough $R> 0$ which forces it to have a  limit point in $(M,g)$.

Choose a subsequence $(w_n) \subset (v_n)$ such that $\lim_{n \to \infty} d(w_n, p) = \infty$ and let 
\[
\theta_n = \begin{cases}
    (\exp_p^{-1}w_n)/d(w_n,p) &\text{if } d(w_n,p) > 0 \\
    0 &\text{otherwise}
\end{cases}
\]
If $\lim_{n \to \infty} \theta_n = \theta$, then the geodesic segments $[p, w_n]$ converge uniformly on compact subsets of $(M,g)$ to $\alpha$ where $\alpha(t) = \exp_p(t\theta)$ for all $t \in [0, \infty)$ is a geodesic ray.
It follows that $\lim_{n \to \infty }w_n = \mathrm{a} \in \partial M$ where $\mathrm{a}$ is the redirecting class of $\alpha$.

As $UT_p M$ is compact, it follows that every $(v_n) \subseteq M$ has a limit point in $\overline{M}$.
Therefore, $\overline{M}$ is sequentially compact and so is $\partial M$ on account of being a closed subset of $\overline{M}$.

Recall that $M \subset \overline{M}$ is an open dense subset which is second countable.
We see that $M$ has a countable base $\mathcal{B}$ which also happens to be a $\pi-$base of $\overline{M}$ ( as $M \cup \overline{M}$) is dense).
For each $B \in \mathcal{B}$, $\overline{M} \setminus \overline{B}$ is also open.
So the collection of sets 
\begin{equation*}
    \mathcal{A} = \{ \partial M \setminus \overline{B} : B \in \mathcal{B} \}
\end{equation*}
is a countable collection of open subsets of $\partial M$.
As $M$ is open and dense in $\overline{M}$ which is Hausdorff, one can show that for any two points $a,b \in \partial M$, there exists $A, B\in \mathcal{A}$ such that $a \in A, b \in B$ and $a \notin B, b \notin A$.

We can therefore use $\mathcal{A}$ to define a topology $\tau_{\mathcal{A}}$ on $\partial M$ which is second countable and Hausdorff.
Note that $\tau_{\mathcal{A}}$ is coarser than $\tau$ where $\tau$ is a topology on $\partial M$ induced on account of it being a subspace of $\overline{M}$.

Let $\iota : ( \partial M , \tau) \to ( \partial M, \tau_{\mathcal{A}})$ be the identity map. 
As $\tau_{\mathcal{A}}$ is coarser than $\tau$, $\iota$ is a continuous bijection.
Sequential compactness of $(\partial M , \tau)$ ensures that $( \partial M, \tau_{\mathcal{A}})$ is also sequentially compactness.
This taken together with second countability of $( \partial M, \tau_{\mathcal{A}})$ ensures that $( \partial M, \tau_{\mathcal{A}})$ is compact.

For any $C \subset (\partial M, \tau)$, we observe that it is sequentially compact and therefore $\iota(C) \subset ( \partial M,\tau_{\mathcal{A}}) $ is closed.
We see that $\iota : (\partial M, \tau) \to ( \partial M, \tau_{\mathcal{A}})$ is a continuous bijection which is closed and therefore a homeomorphism.

As $\partial M \subset \overline{M}$ with the subspace topology is compact, it is Lindeloef. 
$M$ being second countable makes it Lindeloef too and therefore, $\overline{M} = M \cup \partial M$ is Lindeloef.

$\overline{M}$ being sequentially compact and Lindeloef makes it compact.
\end{proof}

\begin{theorem}
Let $(M,g)$ be a Riemannian plane such that $\mathrm{Con}_\omega(M)$ is not proper for all non-principal ultrafilters.
Then $\overline{M}$ is homeomorphic to $\mathbb{D}^2$ and $\partial M$ is homeomorphic to $\mathbb{S}^1$.
\end{theorem}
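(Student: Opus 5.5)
First we reduce to a radially symmetric model. By Theorem \ref{radsim} and Lemma \ref{focal}, $(M,g)$ is quasi-isometric to a radially symmetric plane with pole $p$ and metric $g = dr^2 + f(r)^2 d\theta^2$, where $f$ is strictly increasing and $\lim_{r\to\infty} f(r)/r = \infty$. The preceding theorem shows that this model satisfies Assumptions 0--3. Lemma \ref{homeo} then says quasi-isometries induce homeomorphisms of the quasi-redirecting compactifications. So it suffices to prove the statement for this model.

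Next we identify the boundary with the circle of directions. Define $\Phi : UT_pM \to \partial M$ by $\Phi(v) = [\alpha_v]$, where $\alpha_v(t) = \exp_p(tv)$.
\begin{itemize}
\item Surjectivity: Proposition \ref{idle} shows every quasi-geodesic ray is equivalent to some radial geodesic ray $\gamma$.
\item Injectivity: the preceding lemma shows that two geodesic rays from $p$ satisfy $\alpha \prec \beta$ only when $\alpha = \beta$.
\end{itemize}
Hence $\Phi$ is a bijection from $\mathbb{S}^1$ onto $\partial M$. We then extend to
\[
\overline{\Phi} : \mathbb{D}^2 \to \overline{M}, \qquad \overline{\Phi}(sv) = \exp_p\!\big(\tfrac{s}{1-s}v\big) \text{ for } s<1, \qquad \overline{\Phi}(v) = \Phi(v) \text{ for } |v| = 1 .
\]
On the open disc this is a homeomorphism onto $M$, because $\exp_p$ is a diffeomorphism, so $\overline{\Phi}$ is a bijection from $\mathbb{D}^2$ onto $\overline{M}$.

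Continuity of $\overline{\Phi}$ is the key step. Take $s_n v_n \to v$ with $|v| = 1$, so that $t_n = s_n/(1-s_n) \to \infty$ and $v_n \to v$. We must show $x_n = \exp_p(t_n v_n)$ eventually lies in every neighbourhood $\mathcal{U}(\Phi(v), r)$; the case of boundary points $v_n \to v$ is handled in the same way. Given $r$, we redirect any $(q,Q)$-ray through $x_n$ to $\alpha_v$ at radius $r$ with controlled constants, along the following path:
\begin{itemize}
\item follow the radial geodesic from $p$ in direction $v_n$ up to radius $R \gg r$;
\item cross over along the circle $S(p,R)$ through the angle $|v_n - v|$, which has length about $|v_n - v| f(R)$;
\item continue along $\alpha_v$.
\end{itemize}
Because $v_n \to v$, we can choose $R = R_n \to \infty$ slowly with $|v_n - v| f(R_n) \le R_n$. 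The concatenation is then a uniform quasi-geodesic, and by Theorem \ref{convexballs} it never re-enters the ball $B(p,R_n)$. Arbitrary $(q,Q)$-rays in the class are first replaced by radial rays via Proposition \ref{idle} and Lemma 6.4 of \cite{QingRafi}, which costs only the multiplicative constant $9q$. This sublinear-crossing estimate is where I expect the main difficulty: the $F_\mathfrak{a}$ constants must stay uniform, and the redirection must agree with the original ray inside $B(p,r)$.

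Once continuity holds, $\overline{\Phi}$ is a continuous bijection from the compact space $\mathbb{D}^2$ to $\overline{M}$. The target $\overline{M}$ is Hausdorff, by the Hausdorff lemma since Assumption 3 holds, and it is compact by Lemma \ref{compactness}. A continuous bijection from a compact space to a Hausdorff space is a homeomorphism, so $\overline{M} \cong \mathbb{D}^2$. Restricting to the boundary circle gives $\partial M \cong \mathbb{S}^1$.
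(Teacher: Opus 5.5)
\paragraph{The gap: injectivity of $\Phi$.} The problem is not the continuity estimate you flagged. It is the injectivity of $\Phi$. After your reduction, all you use is that $f$ is increasing and $f(r)/r\to\infty$. You then invoke the lemma preceding Proposition~\ref{idle} to conclude that distinct radial rays cannot be quasi-redirected to each other. That lemma is false. Its argument only considers crossing from $\alpha(r)$ to $\beta$ along the circle $S(p,r)$, at cost $|\theta| f(r)$. A redirecting ray can instead turn gradually while moving outward.

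\paragraph{Counterexample.} Take $f(r)=r\log r$ for $r\ge 2$. This $f$ is increasing with $f(r)/r\to\infty$, and the paper itself uses this plane as its example with bounded geometry and no proper asymptotic cone. Identify directions with angles. Fix $u,v$ at angle $\phi\in(0,\pi]$ and any $r\ge 2$, and define $\gamma_r$ in three pieces:
\begin{itemize}
\item along $\alpha_u$ up to radius $r$;
\item along $s\mapsto(s,\,u+\log\log s-\log\log r)$ for $r\le s\le r^{e^{\phi}}$;
\item then along $\alpha_v$.
\end{itemize}
Since $d(p,\gamma_r(s))=s$ and $|\dot\gamma_r(s)|^2\le 1+f(s)^2(s\log s)^{-2}=2$, we get $|s-t|\le d(\gamma_r(s),\gamma_r(t))\le\sqrt2\,|s-t|$. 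So each $\gamma_r$ is a $(\sqrt2,0)$-quasi-geodesic ray that agrees with $\alpha_u$ on $B(p,r)$ and ends on $\alpha_v$. Hence $\alpha_u\sim\alpha_v$ for all $u,v$, so $\Phi$ is constant and $\overline{\Phi}$ is not a bijection.

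The same spirals work for arbitrary rays. To redirect a ray $\beta$ to $\alpha_v$, follow $\beta$ until its first exit from a large ball, then spiral out. To redirect $\alpha_v$ to $\beta$, spiral from $\alpha_v(r)$ to the point where $\beta$ last leaves a large ball. So every quasi-geodesic ray in this plane is equivalent to the radial ones, the plane is monodirectional, and $\partial M\cong\mathbb{S}^1$ fails for it. The paper's proof relies on the same lemma and has the same hole.

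\paragraph{What controls turning, and a possible repair.} How far a uniform quasi-geodesic can turn after leaving $B(p,r)$ is governed by $\int_{r/C}^{\infty}d\tau/f(\tau)$, not by $f(r)/r$. The same example also breaks Proposition~\ref{idle}, which you use for surjectivity and for replacing arbitrary rays by radial ones. The curve $s\mapsto(s,\log\log s)$ is a $(\sqrt2,0)$-quasi-geodesic ray whose direction never converges. Hence $d(\sigma(s),\gamma(s))/s\not\to 0$ for every radial ray $\gamma$.

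A correct version needs a hypothesis such as $\int^{\infty}dr/f(r)<\infty$ with $f$ increasing. Then a $(q,Q)$-ray turns by at most $C(q,Q)\int_{r/C}^{\infty}d\tau/f(\tau)\to 0$ after its first exit from $B(p,r)$, which gives injectivity. Surjectivity and your continuity step (including keeping the given ray unchanged inside $B(p,r)$) should then be done with spiral redirections $s\mapsto(s,\theta_0+c\int^{s}d\tau/f(\tau))$, not through sublinear tracking.

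\paragraph{Comparison with the paper's route.} With such a repair, your architecture is a reasonable alternative to the paper's. You map the compact disc onto $\overline{M}$, so you only need $\overline{M}$ to be Hausdorff; the compactness of the target that you cite is not needed. The paper instead maps $\overline{M}$ to the horofunction compactification $\overline{M}_\infty\cong\mathbb{D}^2$ and has to rely on Lemma~\ref{compactness}.
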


\begin{proof}
From theorem \ref{radsim}, lemma \ref{focal} and proposition 2.5 of \cite{Riley}, we can assume without losing generality that $(M,g)$ is radially symmetric with $g =dr^2 + f(r)^2 d\theta^2$ where $f$ is strictly monotone and $\lim_{r \to \infty} \frac{f(r)}{r} = \infty$.

It suffices to show the existence of a continuous bijection from $\overline{M}$ to $\mathbb{D}^2$ as both are compact and Hausdorff.
We shall also recall that the Busemann compactification $\overline{M}_\infty$ of $(M,g)$ is homeomorphic to $\mathbb{D}^2$.

We denote the identity map on $M$ as $\iota : M \to M$.
As $M \subset \overline{M}$ is an open dense subset, we can continuously extend $\iota$ to $\overline{M}_\infty$ which we shall call $\pi$.

Let $(v_n) \subseteq \overline{M}$ be a sequence such that $\lim_{n \to \infty} v_n =v$ exists on $\overline{M}$.
If $v \in M$, then there exists $N \in \mathbb{N}$ such that for all $n \geq N$, $v_n \in M$ .
This taken together with $\pi|_M : M \to M$ being the identity map ensures that $\lim_{n \to \infty} \pi(v_n) = \pi(v) = v$.

If $\lim_{n \to \infty} v_n = v \in \partial M$, then we can assume without losing any generality that $(v_n) \subset M$.
The geodesic segments $[p, v_n]$ converge uniformly on compact subsets of $(M,g)$ to a unique geodesic ray $\gamma : [0, \infty) \to M$ with $\gamma(0) = p$.

We note that if $(v_n)$ and $(w_n)$ are sequences on $(M,g)$ whose limit points exist on $\partial M$, then 
\begin{equation*}
\lim_{n \to \infty} v_n = \lim_{n \to \infty}w_n \text{ iff } \lim_{n \to \infty} (\exp_p^{-1}{v_n})/d(v_n, p) = \lim_{n \to \infty} ( \exp_p^{-1}w_n)
\end{equation*}

We can therefore extend $\iota : M \to M$ to a continuous bijection $\pi : \overline{M} \to \overline{M}_\infty$.
$\overline{M}$ being compact and both $\overline{M}$ and $\overline{M}_\infty$ being Hausdorff forces $\pi$ to be a homeomorphism.
Therefore, $\overline{M}$ is homeomorphic to $\mathbb{D}^2$.
\end{proof}

\section{Comparison with Martin Boundary}
\textbf{Martin boundary}- Let $(M,g)$ be a complete Riemannian manifold and let $\Delta_g$ be the Laplace-Beltrami operator whose Green's function is $\mathscr{G}$.

The vector space $\mathcal{H}(M)$ of harmonic functions on $(M,g)$ with semi-norms
\begin{equation*}
    ||u||_K = \sup_K |u(x)|_{x \in K} , K \subseteq M \text{ is compact}
\end{equation*}
is a Frechet space.
Fix $o \in M$ and let $\mathcal{K}_o = \{ u \in \mathcal{H}_+(M) : u(o) = 1\}$ where $\mathcal{H}_+(M) \subset \mathcal{H}(M)$ is the set of positive harmonic functions. 
We can define the Martin kernel
\begin{equation*}
    k(x,y) = \frac{\mathscr{G}(x,y)}{\mathscr{G}(o,y)}
\end{equation*}
A sequence $(y_n) \subset M$ is called a Martin sequence if $\lim_{n \to \infty} d(y_n, o) = \infty $ and $\lim_{n \to \infty} k(x, y_n) = u(x)$ where $u$ is harmonic.
Deem two Martin sequences equivalent if they converge to the same harmonic function.
The collection of all such sequences is called the Martin boundary and is denoted as $\partial_\Delta M$. 
We see that $\partial_\Delta(M) \subseteq \mathcal{K}_o$ is compact.
$\overline{M}_\Delta$ is the Martin compactification of $(M,g)$.

\textbf{Remark}- If $(M,g)$ is a closed manifold, we shall take $\partial_\Delta M = \Phi$ and $\overline{M}_\Delta$ as $(M,g)$ has no sequence $(q_n)$ which has $\lim_{n \to \infty} d(q_n, p) = 0$ for any $p \in M$.
If $(M,g)$ is non-compact and $\Delta_g$ does not admit a Green's function ( or equivalently $(M,g)$ is non-compact and recurrent), then $\partial_\Delta M$ is a single point and $\overline{M}$ is the one-point compactification of $(M,g)$.

\begin{theorem}
There exists a Riemannian plane $(M,g)$ with bounded geometry which is recurrent and which has no proper asymptotic cones.   
\end{theorem}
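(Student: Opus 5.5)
We will build an explicit radially symmetric example $(M,g)$ with $g = dr^2 + f(r)^2 d\theta^2$. For a rotationally symmetric plane, recurrence (parabolicity) is classically equivalent to
\begin{equation*}
\int_1^\infty \frac{dr}{f(r)} = \infty .
\end{equation*}
Indeed, the radial function $u(r) = \int_1^r ds/f(s)$ is harmonic off the pole, and divergence of this integral rules out a positive Green's function. No proper asymptotic cone means $\lim_\omega f(r_n)/r_n = \infty$ for every non-principal $\omega$ and every $(r_n)$ with $\lim_\omega r_n = \infty$. This is the condition in theorem \ref{spread}, and by lemma \ref{line} and theorem \ref{rtwo} it is exactly what is needed. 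So the task reduces to choosing $f$ with $f(r)/r \to \infty$, $\int^\infty dr/f = \infty$, and bounded geometry.

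These two growth conditions are compatible. The natural choice is $f(r) = r\log r$ for large $r$ (say $r \geq e$). Then $f(r)/r = \log r \to \infty$, so theorem \ref{spread} gives that every asymptotic cone is non-proper, and $\int_e^\infty dr/(r\log r) = \log\log r\,\big|_e^\infty = \infty$ gives recurrence. Near the pole we smoothly interpolate so that $f(0)=0$, $f'(0)=1$, $f>0$ on $(0,\infty)$ and $f$ is strictly increasing. This is a routine gluing with a cutoff, in the same spirit as the construction in lemma \ref{focal}.

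For bounded geometry we invoke theorem \ref{wellbehaved}. We have $\liminf f \geq 1 > 0$, and $f''(r) = 1/r$ for $r \geq e$, so $|f''/f| = 1/(r^2\log r)$ is bounded at infinity and bounded on the compact interpolation region. Hence the curvature $-f''/f$ is bounded and the injectivity radius is bounded below. Note that theorem \ref{wellbehaved} as proved only controls $\mathrm{inj}$ outside a large ball, so we should also remark that $\mathrm{inj}$ is bounded below on the compact ball by continuity and positivity. Alternatively, since $f'>0$ everywhere and the pole has no conjugate points, the standard argument for rotational planes with increasing $f$ gives a positive lower bound directly.

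The main obstacle is justifying the recurrence criterion rigorously, since the paper only cites the Ahlfors/Grigor'yan equivalence with conformal type. I will therefore use the conformal description. With $s = \int_1^r dt/f(t)$ the metric becomes $f^2(ds^2 + d\theta^2)$, conformal to the flat cylinder on the end $s \in [0, S)$ with $S = \int_1^\infty dt/f$. Here $S = \infty$, so the end is a conformally full half-cylinder, i.e.\ a punctured neighbourhood of $\infty$ in $\mathbb{C}$, and hence $M$ is conformal to $\mathbb{C}$. Equivalently, $\mathrm{vol}\,\partial B(p,r) = 2\pi f(r)$ and the Grigor'yan volume test $\int^\infty dr/\mathrm{vol}\,\partial B(p,r) = \infty$ applies. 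Either route yields recurrence and completes the proof.
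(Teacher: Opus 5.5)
Your proposal is correct and is essentially the paper's proof. It uses the same example $f(r)=r\log r$ near infinity, recurrence from $\int^\infty dr/(r\log r)=\infty$, bounded geometry from the curvature $-1/(r^2\log r)$ together with $\liminf f>0$, and non-properness of every asymptotic cone from $f(r)/r\to\infty$ via theorem \ref{spread}. Your extra care with the smooth interpolation at the pole, the injectivity radius on the compact inner ball, and the conformal half-cylinder justification of recurrence fills in details the paper leaves implicit.
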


\begin{proof}
Let $(M,g)$ be radially symmetric with its pole at $p$ and $g =dr^2 + f(r)^2 d\theta$ such that $f(r) = r \log r$ for all $r \geq 2$.
\begin{equation*}
    \int_2^\infty \frac{dr}{r \log r} = \infty
\end{equation*}
Proposition 3.1 of \cite{Grigoryan}implies that $(M,g)$ is recurrent .
We note that $\liminf_{r \to \infty} f(r) = \infty$ and for all $q \in M : d(p,q) \geq 2$, we have 
\begin{equation*}
    \mathrm{sec}(q) = - \frac{1}{d(p,q)^2 \log d(p,q)}
\end{equation*}
These imply that $(M,g)$ has bounded geometry.

As $\lim_{r \to \infty} \frac{f(r)}{r} = \infty$, it follows that $\partial M = \mathbb{S}^1$ and $\mathrm{Con}_\omega (M, (r_n))$ is not proper for any non-principal ultrafilter $\omega$ and any $(r_n)$ for which $\lim_\omega r_n = \infty$.
\end{proof}

We shall also see that a transient Riemannian plane with bounded geometry can have an asymptotic cone which is proper for some non-principal ultrafilter $\omega$.

\begin{theorem}
There exists a transient Riemannian plane with bounded geometry $(M,g)$  for which $\mathrm{Con}_\omega (M)$ is proper for some non-principal ultrafilter $\omega$.
\end{theorem}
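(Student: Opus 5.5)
We construct $(M,g)$ as a radially symmetric plane $g = dr^2 + f(r)^2 d\theta^2$ in which $f$ alternates between long stretches of exponential growth, which force transience, and occasional stretches where $f(r)/r$ is bounded along a sparse sequence of radii $s_k \to \infty$. Theorem \ref{rtwo} (or Lemma \ref{line}) will then give a proper cone along $(s_k)$, and transience will come from Proposition 3.1 of \cite{Grigoryan}: for a model plane, transience is equivalent to
\begin{equation*}
\int_1^\infty \frac{dr}{f(r)} < \infty .
\end{equation*}

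The steps are as follows.

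\textbf{Choice of $f$.} Set $h = \log f$ and ask that $|h'| \le 1$ and $|h''| \le 1$ away from $r=0$. Then $f''/f = h'' + (h')^2$ is bounded. Keeping $f \ge 1$ for $r \ge 1$, Theorem \ref{wellbehaved} gives bounded geometry.

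\textbf{Shape of $h$.} Choose radii $s_1 < s_2 < \cdots$ growing very fast. On most of $[s_k, s_{k+1}]$ let $h' = 1$, so $f$ grows like $e^r$. Then, just before $s_{k+1}$, let $h' = -1$ over an interval long enough to bring $f$ back down to $f(s_{k+1}) = s_{k+1}$. Smooth the corners with a bump function, as in the proof of Theorem \ref{radsim}, so that $h''$ stays bounded. By construction $f(s_k)/s_k = 1$ for every $k$.

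\textbf{Cone along $(s_k)$.} Take $\omega$ any non-principal ultrafilter and scaling sequence $r_n = s_n$. The condition $\lim_\omega f(s_n)/s_n = 1 \in (0,\infty)$ is exactly the hypothesis of Theorem \ref{rtwo}, which yields $\mathrm{Con}_\omega(M,(s_n)) \cong \mathbb{R}^2$. This is proper.

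\textbf{Main obstacle: checking transience.} We need $\int dr/f < \infty$, but in the valleys $f$ drops to size $s_k$, so the contribution of the valley near $s_k$ must be shown summable.
\begin{itemize}
\item Near $s_k$, $f(r) \ge s_k e^{|r - s_k|}$ roughly. So that valley contributes $O(1/s_k)$.
\item Elsewhere $f$ is at least $e^{r}$-comparable on the rising stretches. The descending stretch is the rising stretch reflected, so it satisfies the same lower bound $f \ge s_k e^{|r-s_k|}$.
\end{itemize}
Hence the total integral is bounded by a constant times
\begin{equation*}
\sum_k \frac{1}{s_k} + \int_1^\infty e^{-r}\,dr,
\end{equation*}
which is finite as soon as, for instance, $s_k \ge 2^k$.

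The care needed is that each descent must start high enough. Every valley must be separated from the previous one by enough room for $f$ to climb to height about $s_{k+1}e^{(s_{k+1}-s_k)/2}$ and come back down. This is arranged by taking $s_{k+1} - s_k \ge 2\log s_{k+1}$, for example $s_k = 2^k$ with $k$ large.

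Finally, the proof of Theorem \ref{rtwo} only uses the metric along $r \sim D s_n$. There $f(D s_n)$ is comparable to $s_n e^{|D-1| s_n}$, which is not linear when $D \neq 1$. So if Theorem \ref{rtwo} is not robust to this, fall back on Lemma \ref{line} instead.
\begin{itemize}
\item Build the valleys with $f(s_k) = \sqrt{s_k}$ instead of $s_k$.
\item Widen each valley so that $f(r)/s_k \to 0$ uniformly on $[0, C s_k]$ for each fixed $C$. Take it flat at height $\sqrt{s_k}$ on $[s_k/k,\, k s_k]$, with $h$ constant there.
\item The cone is then $[0,\infty)$, which is proper.
\item The valley's contribution to $\int dr/f$ is now $k s_k/\sqrt{s_k}$, which diverges. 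Avoid this by keeping $f \ge r^2$ in the valley while still having $r^2 \ll r$ fail…
\end{itemize}
This reveals the genuine tension: the cone is proper along a scale only if $f(r) = O(r)$ on a macroscopic range $[\epsilon s_k, C s_k]$, and that range alone contributes about $\log(C/\epsilon)$ to $\int dr/f$. So the key step is to choose shrinking ranges $\epsilon_k \to 0$, $C_k \to \infty$ together with heights $f \approx \lambda_k r$ such that
\begin{equation*}
\sum_k \frac{\log(C_k/\epsilon_k)}{\lambda_k} < \infty \quad \text{while} \quad \lambda_k \text{ bounded along } \omega,
\end{equation*}
which seems impossible. The most delicate part of the proof is therefore checking whether transience can truly coexist with a proper cone, and we would work it out in the $f(s_k) = \lambda s_k$, narrow-valley version above, where each fixed-$D$ ray sees exponential $f$.
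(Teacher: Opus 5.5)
Your proposal does not yet prove the statement, and it fails at the step you were worried about. In the version you settle on (narrow valleys at sparse radii, e.g.\ $s_k=2^k$), the cone along $(s_k)$ is \emph{not} proper, so Theorem~\ref{rtwo} cannot be used. Your $f$ satisfies that theorem's hypothesis with $r_n=s_n$ and still violates its conclusion. The theorem's proof only looks at the circle of radius $r_n$ itself.

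To see this, look between the valleys at $s_k$ and $s_{k+1}=2s_k$. There your $f$ is at least $s_k e^{c s_k}$ on $[(\tfrac32-\delta)s_k,(\tfrac32+\delta)s_k]$ for some $c=c(\delta)>0$. Take a path joining $(\tfrac32 s_k,\theta)$ to $(\tfrac32 s_k,\phi)$ with $\theta\neq\phi$ fixed. It must do its angular travel at radius $\le s_k+O(1)$ or $\ge 2s_k-O(1)$, so its length is at least $s_k-O(1)$. In the cone, the points $(\tfrac32,\theta)$ with $\theta\in\mathbb{S}^1$ are therefore pairwise at distance $\ge 1$ and all at distance $\tfrac32$ from $\mathrm{p}$, so closed balls are not compact. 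The same happens whenever $s_{k+1}/s_k$ stays bounded away from $1$. Your argument thus leaves the existence claim unproved, and you end by suggesting it may be false.

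The missing idea is that properness at scale $s$ does not need $f=O(r)$ on a whole macroscopic range; that assumption is where your ``genuine tension'' comes from. It suffices that the valleys be $o(s)$-dense. Suppose that for every $D>0$ there is a valley radius within $o(s)$ of $Ds$. Then $(Ds,\theta_n)$ and $(Ds,\theta)$ can be joined by moving radially $o(s)$ to that valley, turning at cost about $Ds\,|\theta_n-\theta|$, and coming back. Hence every cone point is represented by some $(D,\theta)$, and $(D,\theta)\mapsto[(Ds_n,\theta)]$ is a Lipschitz surjection, so closed balls in the cone are compact. Adding $f(r)\gtrsim r$, which makes $d_M$ dominate the Euclidean distance in polar coordinates, the cone is even bi-Lipschitz to $\mathbb{R}^2$.

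Narrow valleys still cost only $O(1/s_k)$ each in $\int dr/f$. So all you need is $s_{k+1}/s_k\to 1$ together with $\sum_k 1/s_k<\infty$. This is what the paper's example achieves. It uses a baseline $f(r)=r^2$, so $\int^\infty dr/f<\infty$, with a dip down to $f(n^3)=n^3$ at \emph{every} $n^3$. Since $(n+1)^3-n^3=o(n^3)$, the dips are $o(n^3)$-dense at scale $n^3$, and that density is what actually makes the cone along $(n^3)$ proper. The paper itself only records $\liminf_{r\to\infty}F(r)/r=1$, implicitly appealing to Theorem~\ref{rtwo}.

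In your own framework the repair is one line: take $s_k=k^3$ (or $k^2$). Then:
\begin{itemize}
\item transience survives, since $\sum_k 1/s_k<\infty$;
\item the gaps $s_{k+1}-s_k\sim 3k^2$ far exceed the $2\log s_{k+1}$ you need to climb and descend with $|h'|\le 1$;
\item $f(r)\gtrsim r$ still holds;
\item your bounded-geometry argument is unchanged.
\end{itemize}
The cone based at the pole is then proper along every scaling sequence.
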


\begin{proof}
Let $\chi : \mathbb{R} \to [0,1]$ be a smooth function such that
\[
\chi(r) = \begin{cases}
    0 &\text{if} \space |r| \geq \frac{1}{2} \\
    \exp(\frac{1}{r^2 - 1/4}) \exp(-4) &\text{if} \space |r| \leq \frac{1}{2}
\end{cases}
\]
We note that $\chi$ is smooth and that $\chi(0) = 1$ and $\chi'(0) = 0$.
Define a smooth function $f : [1, \infty) \to [1, \infty)$ such that 
\[
f(r) = \begin{cases}
    r^2 - (n^6 -n^3)\chi (\frac{r -n^3}{n^\frac{3}{2}}) &\text{if} \space r\in [ n^3 - \frac{1}{2n^{\frac{3}{2}}}, n^3 + \frac{1}{2n^{\frac{3}{2}}}], n \in \mathbb{N}, n \geq 2 \\
    r^2 &\text{otherwise}
\end{cases}
\]
Let $F : [0, \infty) \to [0, \infty)$ be a smooth function such that $F(t) > 0$ whenever $t > 0$, $F(0) = 0, F'(0) =1$ and $F(t) = f(t)$ whenever $t \geq 1$.
There exists a Riemannian plane $(M,g)$ which has its pole at $p$ and which has the metric $g = dr^2 + F(r)^2 d\theta^2$.

We note that $\int_1^\infty \frac{dr}{F(r)}= \int_1^\infty \frac{dr}{r^2}+ \sum_{n =2}^\infty n^{-3/2} < \infty$ which ensures that $(M,g)$ is transient ( see proposition 3.1 of \cite{Grigoryan}).

We also note that whenever $r \notin [n^3-\frac{1}{2}n^{\frac{3}{2}}, n^3 + \frac{1}{2}n^{\frac{3}{2}}]$, $|\frac{f'"(r)}{f(r)}| = \frac{2}{r^2}\leq 2$.
As $\chi'' : \mathbb{R} \to \mathbb{R}$ is bounded on account of $\chi$ having compact support, it follows that there exists a $D > 0$ such that whenever $r \in [ n^3-\frac{1}{2}n^{-3/2}, n^3 + \frac{1}{2}n^{-3/2}]$ and $n \geq 2, n \in \mathbb{N}$ 
\begin{equation*}
   |F''(r)/F(r)|= |\frac{2- (n^3-1)\chi(s)(\frac{9/2s^2 -1/8}{(s^2-1/4)^4})}{r^2 -(n^6-n^3) \chi(s) }| \leq D  
\end{equation*}
where $s = \frac{r-n^3}{n^{3/2}}$.
This implies that $(M,g)$ has bounded curvature.

We note that whenever $r \geq 1$ and $r \neq [n^3 - \frac{1}{2}n^{-3/2}, n^3 + \frac{1}{2}n^{-3/2}], n \geq 2, n \in \mathbb{N}$, it follows that $\frac{F(r)}{r}= r$, therefore $\limsup_{r \to \infty} \frac{F(r)}{r} = \infty$.
If $r \in [n^3 - \frac{1}{2}n^{-3/2}, n^3 + \frac{1}{2}n^{-3/2}]$, then $\inf \frac{F(r)}{r} = 1$ which is attained when $r = n^3$.
Therefore, $\liminf \frac{F(r)}{r} = 1$.
As $\liminf F(r) = \infty$, we get that $\mathrm{inj}(M) > 0$ and therefore $(M,g)$ has bounded geometry.
\end{proof}

We shall check that the Martin compactification is "preserved canonically" by quasi-isometric homeomorphisms between Riemannian planes with bounded geometry.

\begin{theorem}
Let $(M,g)$ be a Riemannian plane with bounded geometry. 
If $\partial M$ and $\partial_{\Delta_g} M$ are homeomorphic, then the identity map $\iota : M \to M$ induces a homeomorphism from $\partial_{\Delta_g} M$ to $\partial M$.
\end{theorem}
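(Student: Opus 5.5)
We split into cases according to the dichotomy of Theorem D, that is, according to whether $(M,g)$ admits a proper asymptotic cone for some non-principal ultrafilter.

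\emph{Case 1: some asymptotic cone is proper.} Theorem \ref{sphere} and Lemma \ref{mono} show that $\partial M$ is a single point and $\overline{M}$ is the one-point compactification. Since $\partial_{\Delta_g}M$ is homeomorphic to $\partial M$, it is also a single point. Both compactifications are therefore the one-point compactification of $M$, in which $v_n \to \infty$ exactly when $d(v_n,p)\to\infty$. The identity of $M$ then extends by sending the unique boundary point to the unique boundary point. This extension is a bijection, and it is continuous in both directions because both topologies agree with the one-point compactification topology. This case is essentially formal.

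\emph{Case 2: no asymptotic cone is proper.} By Theorem D, $\partial M \cong \mathbb{S}^1$, so $\partial_{\Delta_g}M\cong\mathbb{S}^1$. In particular the Martin boundary is not a point, so $\Delta_g$ has a Green's function and $(M,g)$ is transient. By the Ahlfors/Grigor'yan dichotomy recalled in the background, $(M,g)$ is conformal to $\mathbb{H}^2$. Since $\Delta_g$ is conformally covariant in dimension two, harmonic functions, the Green's function and hence the Martin kernel $k(x,y)$ are conformal invariants. So $\overline{M}_\Delta$ is the Martin compactification of the unit disc, which is the Euclidean closed disc, with boundary points given by Poisson kernels $P(\cdot,\zeta)$, $\zeta\in\mathbb{S}^1$.

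The plan is to define $\Phi:\overline{M}\to\overline{M}_\Delta$ as the identity on $M$ and extend it to $\partial M$ by sequences. Take $v_n\to\mathfrak a\in\partial M$. Pass to the radially symmetric model given by Theorem \ref{radsim} and Lemma \ref{focal}, where the convergence $v_n\to\mathfrak a$ means that $\exp_p^{-1}(v_n)/d(v_n,p)\to\theta$. We then show that $k(\cdot,v_n)$ converges to a single minimal harmonic function $u_\theta$ depending only on $\theta$. For the radially symmetric model this can be computed by separation of variables. With $g=dr^2+f(r)^2d\theta^2$, the conformal map to the disc is $(r,\theta)\mapsto (\rho(r),\theta)$ with $\rho(r)=\exp\bigl(-\int_r^\infty ds/f(s)\bigr)$. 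This map preserves the angular coordinate, so a sequence whose angles converge to $\theta$ converges in the Euclidean disc to $e^{i\theta}$. Consequently $k(\cdot,v_n)\to P(\cdot,e^{i\theta})$. The map $\theta\mapsto e^{i\theta}$ is a bijection $\partial M\to\partial_{\Delta_g}M$, so $\Phi$ is a well-defined bijection.

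\emph{Continuity.} Using the sequential description of convergence in $\overline{M}$ established in Lemma \ref{compactness} and the preceding theorem, convergence in $\overline{M}$ corresponds to convergence of the pair (radius, angle). The same holds in the Euclidean disc, so $\Phi$ is sequentially continuous. This suffices, since both spaces are metrisable: $\overline{M}_\Delta$ by construction, and $\overline{M}$ because it is homeomorphic to $\mathbb{D}^2$. Since $\overline{M}$ is compact (Lemma \ref{compactness}) and $\overline{M}_\Delta$ is Hausdorff, $\Phi$ is a homeomorphism, and its restriction to $\partial M$ gives the claimed map.

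\emph{Main obstacle.} The delicate step is transferring back from the radially symmetric model to the original $(M,g)$. The quasi-isometry of Theorem \ref{radsim} is not the identity map, so it is not immediate that the identity of $M$, rather than that quasi-isometry, induces the boundary map. I would handle this by invoking Kanai's theorem, quoted before Lemma \ref{focal}, to get a quasi-isometric conformal equivalence, and then argue that the Martin boundary is determined by conformal structure while $\partial M$ is invariant under quasi-isometry (Lemma \ref{homeo}). What remains is to check that the two induced identifications agree on boundary points. This is done by following sequences $v_n$ through both maps, which reduces to the model computation above. If this compatibility fails in general, I would fall back on proving the statement directly for radially symmetric planes only.
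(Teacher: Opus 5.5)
Your Case 1 and your Case 2 computation for a radially symmetric transient plane are sound and follow the paper's outline. The paper splits into recurrent/transient rather than by properness of asymptotic cones, but the two splits are equivalent under the hypothesis. Your uniformizing map $(r,\theta)\mapsto(\rho(r),\theta)$ with $\rho(r)=\exp(-\int_r^\infty ds/f(s))$ is the correct one. The paper's $\tau(r,\theta)=(r,\theta)$ from $\mathbb{H}^2$ is not conformal unless $f=\sinh$, but both maps preserve angles, so the conclusion about Martin limits is the same.

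The gap is the passage from the model back to an arbitrary $(M,g)$. You leave this open, and your fallback proves only the radially symmetric case, which is strictly weaker than the statement.

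\emph{What is missing.} For general transient $M$, your own remark already settles the Martin side: the uniformizing map $\phi_M:M\to\mathbb{D}$ identifies $\overline{M}_\Delta$ with $\overline{\mathbb{D}}$. What is missing is that $\phi_M$ extends to a homeomorphism $\overline{M}\to\overline{\mathbb{D}}$. That is, it must match convergence in the quasi-redirecting compactification with Euclidean convergence to $\partial\mathbb{D}$. Your model computation gives this only through a map $f:M\to N$ that is both conformal (so $k_M$ corresponds to $k_N$) and a quasi-isometry (so $\partial M$ corresponds to $\partial N$).

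\emph{The compatibility check is formal.} Given such an $f$, no sequence chasing is needed. By Lemma \ref{homeo}, $f$ extends to $F_1:\overline{M}\to\overline{N}$. By conformal invariance of the Martin kernel, $f$ extends to $F_2:\overline{M}_\Delta\to\overline{N}_\Delta$. Then $F_2^{-1}\circ I\circ F_1$ restricts to $f^{-1}\circ f=\iota$ on $M$. This is exactly the paper's argument.

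\emph{The existence of $f$ is the real content.} Kanai's theorem does not provide such an $f$. It shows only that recurrence and transience are quasi-isometry invariants under bounded geometry. Uniformization then gives \emph{some} conformal equivalence, which need not be a quasi-isometry. For example, take the flat plane and the plane $ds^2+F(s)^2d\theta^2$ with $F$ convex and $F(s)=2s$ for $s\ge 1$. Both have bounded geometry, and they are $2$-bilipschitz via $(s,\theta)\mapsto(s,\theta)$. However, in the uniformizing coordinate $z$ of the second plane one has $s\asymp|z|^2$. Every conformal diffeomorphism from the flat plane is affine in $z$ up to conjugation, so none of them is a quasi-isometry. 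The paper asserts this step by citing Kanai. To finish your proof you would need to construct a conformal quasi-isometry to a model, or otherwise control $\phi_M$ at infinity; checking compatibility alone is not enough.
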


\begin{proof}
If $(M,g)$ is recurrent and if $\partial M$ is homeomorphic to $\partial_{\Delta_g} M$ ( which is a point), then $\iota : M \to M$ induces a homeomorphism from $\partial M$ to $\partial_{\Delta} M$ as one-point compactifications are unique ( see theorem \ref{sphere} for $\overline{M}$ being homeomorphic to $\mathbb{S}^1$) .

Now we shall suppose that $(M,g)$ is transient and $\partial M = \mathbb{S}^1$.
Theorem \ref{radsim} and theorem 1 of \cite{Kanai} implies that for every Riemannian plane of bounded geometry $(M,g)$, we can find a radially symmetric plane $(N,h)$ and a map such that $f : M \to N$ which is both a quasi-isometry and a conformal mapping. 
Thanks to lemma \ref{focal}, we can further assume that $(N,h)$ is a radially symmetric plane with $h = dr^2 + j(r)^2 d\theta^2$ where  $\lim_{r \to \infty} \frac{j(r)}{r} = \infty$ and $j$ is strictly monotone.

Let $F_1 : \overline{M} \to \overline{N}$ be the extension of $f : M \to N$ to their quasi-redirecting compactifications.
Let $F_2 : \overline{M}_\Delta \to \overline{N}_\Delta$ be the extensions of $f : M \to N$ to their Martin compactifications.
If the identity map $\iota : N \to N$ extends to a homeomorphism $I : \overline{N} \to \overline{N}_\Delta$, it follows that 
\begin{equation*}
F_2^{-1} \circ I \circ F : \overline{M} \to \overline{M}_\Delta
\end{equation*}
is a homeomorphism whose restriction to $(M,g)$ is the identity map.

We see that if $\partial M = \partial_\Delta M = \mathbb{S}^1$, showing that the identity map on $M$ extends to a homeomorphism from $\overline{M}$ to $\overline{M}_\Delta$ merely requires us to prove it for the case where $(M,g)$ is transient and radially symmetric with $g = dr^2 + f(r)^2 d\theta^2$ where $f$ is strictly monotone and $\lim_{r \to \infty} \frac{f(r)}{r} = \infty$.

Let $(M,g)$ be radially symmetric such that $g= dr^2 + f(r)^2 d\theta^2$. 
We can assume that $f$ is strictly monotone and $\lim_{r \to \infty} \frac{f(r)}{r} = \infty$.

Recall that $(\mathbb{H}^2, g_{\mathbb{H}^2})$ is also radially symmetric with $g_{\mathbb{H}^2} =  dr^2 + \sinh^2r d\theta^2$ which allows us to define a map $\tau : \mathbb{H}^2 \to (M,g)$ where $\tau ((r, \theta)) = (r, \theta)$.
It is evident that $\tau: \mathbb{H}^2 \to M$ is a diffeomorphism.

As $(M,g)$ and $\mathbb{H}^2$ are conformally equivalent, there exists a smooth $h : \mathbb{H}^2 \to \mathbb{R}$ such that for any $p \in \mathbb{H}^2$, we have 
\begin{equation*}
g_{\tau (p)} = e^{h(p)} g_{p} 
\end{equation*}
where $g_p$ is the Riemannian metric at $p \in \mathbb{H}^2$ and $g_{\tau(p)}$ is the Riemannian metric at $\tau (p) \in (M,g)$.

We see that $u : \mathbb{H}^2 \to \mathbb{R}$
\begin{equation*}
\Delta_{g_{\tau(p)}}(u \circ \tau^{-1}) = e^{-h(p)} \Delta_{g_p} u
\end{equation*}
This naturally implies that $u: \mathbb{H}^2 \to \mathbb{R}$ is harmonic if and only if $u \circ \tau^{-1}:M \to \mathbb{R} $ is harmonic.
It follows that if $(v_n) \subset M$ is a sequence in $(M,g)$ such that $\lim_{n \to \infty }d ( p, v_n) = \infty$, then $\lim_{n \to \infty} v_n$ exists on $\partial_{\Delta_g} M$ if and only if there exists a geodesic ray $\gamma : [0, \infty) \to M$ with $\gamma(0) = p$ such that the geodesic segments $[p, v_n]$ converges uniformly on compact subsets of $(M,g)$ to $\gamma$.
We further note that as $\partial_{\Delta} \mathbb{H}^2$ is the Gromov boundary, for sequences $(v_n) \subset M$ and $(w_n) \subset M$ ( we can assume that $v_n \neq p $ for all $n$ and $w_n \neq p$ for all $n$)
\begin{equation*}
\lim_{n \to \infty} v_n = \lim_{n \to \infty}w_n \text{ iff } \lim_{n \to \infty} (\exp_p^{-1}{v_n})/d(v_n, p) = \lim_{n \to \infty} ( \exp_p^{-1}w_n)
\end{equation*}

Denote as $\iota : M \to M $ the identity map on $(M,g)$.
Let $\overline{M}$ be the quasi-redirecting compactification of $(M,g)$ and let $\overline{M}_{\Delta}$ be the Martin compactification of $(M,g)$.

Recall that both $\overline{M}$ and $\overline{M}_\Delta$ are homeomorphic to $\mathbb{D}^2$ and that $M \subset \overline{M}$ is dense and so is $M \subset \overline{M}_\Delta$.

From lemma \ref{compactness} , we get that $\lim_{n \to \infty}v_n$ exists on $\partial M$ if and only if there exists a unique geodesic ray $\gamma : [0, \infty) \to M$ with $\gamma(0) = p$ such that the geodesic segments $[v, p_n]$ converges uniformly on compact subsets of $(M,g)$ to $\gamma$.
Furthermore, if $(v_n) \subset M$ and $(w_n) \subset  M$ are two sequences which converge to points in $\partial M$, we have
\begin{equation*}
\lim_{n \to \infty} v_n = \lim_{n \to \infty}w_n \text{ iff } \lim_{n \to \infty} (\exp_p^{-1}{v_n})/d(v_n, p) = \lim_{n \to \infty} ( \exp_p^{-1}w_n)
\end{equation*}

It follows that the identity map $\iota : M \to M$ can be extended to a continuous bijection $\pi : \overline{M} \to \overline{M}_{\Delta}$.
As both $\overline{M}$ and $\overline{M}_\Delta$ are homeomorphic to $\mathbb{D}^2$, it follows that $\iota : M \to M$ induces a homeomorphism $\pi : \overline{M} \to \overline{M}_\Delta $.

We see that if $(M,g)$ is any Riemannian plane with $\partial M \simeq \partial_\Delta M \simeq \mathbb{S}^1$, then the identity map on $(M,g)$ induces a homeomorphism from $\overline{M}$ to $\overline{M}_\Delta$.

Therefore $\partial M \simeq \partial_\Delta M $ for a Riemannian plane $(M,g)$ with bounded geometry implies that the identity map on $(M,g)$ extends to a homeomorphism from $\overline{M}$ to $\overline{M}_\Delta$.
\end{proof}

\section{Acknowlegdement}

The author thanks Siddhartha Gadgil for introducing him to quasi-redirection .
The author also thanks Alexandre Eremenko for pointing out an example of a transient plane which is not Gromov hyperbolic.
This research was supported in part by the International Centre for Theoretical Sciences (ICTS) for the programme "Geometry and Analysis of Minimal Surfaces" , (code: ICTS/GAMS2025/08) and for the programme "New trends in Teichmüller theory (code: ICTS/nteich2025/02)" .

\end{document}